\newtheorem{theorem}{Theorem}[section]
\newtheorem{lemma}[theorem]{Lemma}
\newtheorem{cor}[theorem]{Corollary}
\newtheorem{prop}[theorem]{Proposition}
\numberwithin{equation}{section}
\newcommand{\N}{\hbox{\bf N}}
 \DeclareMathOperator{\ord}{ord}
 \DeclareMathOperator{\supp}{supp}
\renewcommand{\t}{\, | \,}
\begin{document}

\title{On the Erd{\H{o}}s-Ginzburg-Ziv constant of groups of the form $C_2^r\oplus C_n$}

\author{Yushuang Fan\\
\small Mathematical College \\[-0.8ex]
\small China University of Geosciences (Beijing)\\[-0.8ex]
\small Beijing, P. R. China\\
\small\tt fys@cugb.edu.cn\\
\and
\qquad
Qinghai Zhong\thanks{Corresponding author: Qinghai Zhong}\\
\small Institute for Mathematics and Scientific Computing\\[-0.8ex]
\small University of Graz\\[-0.8ex]
\small Heinrichstra{\ss}e 36, 8010 Graz, Austria\\
\small\tt qinghai.zhong@uni-graz.at
}
\date{}
\pdfbookmark{\contentsname}{Contents}
\maketitle

\begin{abstract}  
Let $G$ be a finite abelian group. The Erd{\H{o}}s-Ginzburg-Ziv constant $\mathsf s(G)$ of $G$ is defined as the smallest integer $l\in \mathbb{N}$ such that every sequence $S$ over $G$ of length $|S|\geq l$ has a  zero-sum subsequence $T$ of length $|T|= {\exp}(G)$. The value of this classical invariant for groups with rank at most two is known. But the precise value of $\mathsf s(G)$ for the groups of rank larger than two is difficult to determine. In this paper we pay our attentions to the groups of the form $C_2^{r-1}\oplus C_{2n}$, where  $r\geq 3$ and $n\ge 2$. We give a new upper bound of $\mathsf s(C_2^{r-1}\oplus C_{2n})$ for odd integer $n$. For $r\in [3,4]$, we  obtain that $\mathsf s(C_2^2\oplus C_{2n})=4n+3$ for $n\ge 2$ and  $\mathsf s(C_2^{3}\oplus C_{2n})=4n+5$ for $n\geq 36$.
\end{abstract}
\medskip

{\sl Key Words}: zero-sum sequence, short zero-sum sequence, EGZ constant,
Davenport constant.
\smallskip


\section{ Introduction }

Let $G$ be a finite abelian group. We define some central invariants in zero-sum theory which have been studied since the 1960s. Let
\begin{itemize}
\item[$\bullet$] $\mathsf D(G)$ denote the smallest integer $l\in \mathbb{N}$ such that every sequence $S$ over $G$ of length $|S|\geq l$ has a nonempty zero-sum subsequence.
    
\item[$\bullet$] $\mathsf s(G)$ denote the smallest integer $l\in \mathbb{N}$ such that every sequence $S$ over $G$ of length $|S|\geq l$ has a zero-sum subsequence $T$ of length $|T|=\exp (G)$.
    
\item[$\bullet$] $\eta(G)$ denote the smallest integer $l\in \mathbb{N}$ such that every sequence $S$ over $G$ of length $|S|\geq l$ has a nonempty zero-sum subsequence $T$ of length $|T| \in [1, \exp (G)]$.
\end{itemize}

$\mathsf D(G)$ is called the \emph{Davenport constant} of $G$ and $\mathsf s(G)$ the \emph{Erd{\H{o}}s-Ginzburg-Ziv} (EGZ) constant of $G$. For the historical development of the field and the contributions of many authors we refer to the surveys \cite{GG, GE}. Here we can only provide a brief summary. There is the following chain of inequalities (\cite[5.7.2 and 5.7.4]{Ge-HK06a})
\begin{equation} \label{LOWETA}
\mathsf D (G) \le \eta (G) \le \mathsf s (G) - \exp (G) + 1 \le |G| \,.
\end{equation}
Clearly,  equality holds throughout for cyclic groups which implies  the classical Theorem of Erd{\H{o}}s-Ginzburg-Ziv dating back to 1961 and stating that  $\mathsf s (G) = 2 |G|-1$ (\cite{EGZ}). Since about ten years  the precise value of all three invariants is known for groups having rank at most two. We have (\cite[Theorem 5.8.3]{Ge-HK06a})

\medskip
\textbf{Theorem A.} {\it Let \ $G = C_{n_1} \oplus C_{n_2}$ with $1 \le n_1 \t n_2$. Then
\[
\mathsf s (G) = 2n_1 + 2n_2 - 3\,, \ \ \eta (G) = 2 n_1 + n_2 - 2 \,, \quad \text{and} \quad \mathsf D(G) = n_1 + n_2 - 1   \,.
\]
} 

\medskip

In groups of higher rank precise values for any of the three invariants are known only in very special cases. We briefly sketch the state of knowledge with a focus on groups of the form $C_2^r \oplus C_n$, where $r, n \in \mathbb N$, which have found special attention in all these investigations.

To begin with the Davenport constant, suppose that $G \cong C_{n_1} \oplus \ldots \oplus C_{n_r}$, where $1 < n_1 \mid \ldots \ldots \mid n_r$, and set $\mathsf D^* (G) = 1 + \sum_{i=1}^r(n_i-1)$. An example shows that $\mathsf D^* (G) \le \mathsf D (G)$, and equality holds for $p$-groups. It is open whether or not equality holds for groups of rank three (for recent progress see \cite{Bh-SP07a}). Not even the special case where $G=C_n^3$ is known, but we know that $\mathsf D^* (C_2 \oplus C_{2n}^4) < \mathsf D (C_2 \oplus C_{2n}^4)$ \cite[Theorem 3.1]{Ge-Li-Ph12}. If $n \ge 3$ is odd and $r \in \mathbb N$, then $\mathsf D^* (C_2^r \oplus C_{2n})=\mathsf D (C_2^r \oplus C_{2n})$ if and only if $r \le 3$. If $r \le 2$, then  the structure of the minimal zero-sum sequences over $C_2^3 \oplus C_{2n}$ is well-known (\cite{Sc11b}).
The only groups $G$ with $\mathsf D^* (G) < \mathsf D (G)$, for which the precise value of $\mathsf D (G)$ is known are groups of the form $G=C_2^4 \oplus C_{2n}$ for odd $n \ge 70$ (\cite[Theorem 5.8]{Sa-Ch14a}).

\smallskip
A simple example shows that $\eta (G) \le \mathsf s (G) - \exp (G) + 1$, and the standing conjecture (due to Weidong Gao \cite{Ga03a}) states that equality holds for all groups $G$. This has been confirmed for a variety of groups (see \cite{FYS} without knowing the precise value of $\eta (G)$ or $\mathsf s (G)$). If $G$ is an elementary $2$-group, then it can be seen right from the definitions that $\eta (G)=|G|$ and that $\mathsf s (G)=|G|+1$. If $G=C_3^r$, then $(\mathsf s (G)-1)/2$ is the maximal size of a cap in the $r$-dimensional affine space over $\mathbb F_3$ (\cite[Lemma 5.2]{Edel2007}). This invariant has been studied in finite geometry since decades, but precise values are known only for $r \le 6$ (\cite{Po08a}). For arbitrary primes $p$, the precise values of $\eta (C_p^3)$ and $\mathsf s (C_p^3)$ are unknown. However, there are standing conjectures which have been verified in very special cases (see \cite{G-H-S-T07}), and also the structure of sequences of length $\mathsf D (G)-1$ (resp. $\eta (G)-1$ or $\mathsf s (G)-1)$ that do not have a zero-sum subsequence (of the required length) has been studied for groups of the form $C_n^r$ (\cite[Theorem 3.2]{Ga-Ge-Sc07a}). For recent precise results for $\eta (G)$ and $\mathsf s (G)$ we refer the reader to \cite{Sc-Zh10a} and to \cite{FGZ}.

In the present paper we focus  on the EGZ constant $\mathsf s (G)$ and on $\eta (G)$ for   groups of the form $C_2^{r-1}\oplus C_{2n}$, where $n\ge 2$ is an integer. Our first result provides the best upper bound on $\mathsf s(C_2^r\oplus C_n)$ for $n\ge 3$ odd, which is known so far.

\begin{theorem}\label{Th2} $\mathsf s(C_2^{r-1}\oplus C_{2n})\leq 4n+2^r-5$ where $r\geq 3$ is a positive integer and $n\geq 3$ is an odd integer.
\end{theorem}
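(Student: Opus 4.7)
Since $n$ is odd we have $C_{2n}\cong C_2\oplus C_n$ and hence $G=C_2^{r-1}\oplus C_{2n}\cong C_2^r\oplus C_n$ with $\exp(G)=2n$. Let $\pi:G\to C_2^r$ be the projection killing the $C_n$-factor, and write $(\cdot)_{C_n}:G\to C_n$ for the other coordinate. The plan for a given sequence $S$ over $G$ of length $|S|=4n+2^r-5$ is to iteratively extract from $S$ disjoint pairs $P_i=\{g_i,g_i'\}$ with $\pi(g_i)=\pi(g_i')$, so that $h_i:=g_i+g_i'$ lies in $C_n$. Because $\mathsf s(C_2^r)=2^r+1$, such a pair exists as long as at least $2^r+1$ elements of $S$ remain; a maximal extraction terminates only when the $\pi$-images of the remainder are pairwise distinct. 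Since $|S|$ is odd and each step removes two elements, a counting argument gives at least $k\ge 2n-2$ pairs together with a residue $R$ of odd length $|R|\le 2^r-1$ satisfying $\pi(R)=C_2^r\setminus\{d\}$ for some $d\in C_2^r$.

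\textbf{The easy direction.} If the extraction produces $k\ge 2n-1$ pairs, then $H:=h_1\cdots h_k$ is a sequence over $C_n$ of length at least $\mathsf s(C_n)=2n-1$, so by the Erd{\H o}s-Ginzburg-Ziv theorem $n$ of the $h_i$'s sum to zero in $C_n$, and lifting the corresponding pairs yields a zero-sum subsequence of $S$ of length $2n$. The remaining scenario is therefore $|R|=2^r-1$ and $k=2n-2$; even then, either $H=h_1\cdots h_{2n-2}$ admits a length-$n$ zero-sum subsequence (and we conclude as above), or, by the classical structure theorem for sequences over $C_n$ of length $\mathsf s(C_n)-1$ without a zero-sum subsequence of length $n$, $H$ must take the form $H=a^{n-1}b^{n-1}$ for some $a,b\in C_n$ with $\gcd(a-b,n)=1$.

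\textbf{Extremal case and main obstacle.} Here one must combine pairs with subsets of $R$. Picking $k'$ $a$-pairs, $m'$ $b$-pairs and an even-sized subset $J\subseteq R$ with $\sum_{c\in J}\pi(c)=0$ in $C_2^r$, demanding $2(k'+m')+|J|=2n$ and $k'a+m'b+\sum_{c\in J}(c)_{C_n}=0$ in $C_n$ determines $k'$ uniquely modulo $n$ once $J$ is fixed (using $\gcd(a-b,n)=1$), and validity reduces to a congruence constraint on $\sum_{c\in J}(c)_{C_n}$ with only a small set of forbidden residues modulo $n$. Since $\pi(R)=C_2^r\setminus\{d\}$ contains abundantly many even-sized zero-sum subsets for $r\ge 3$, it should suffice to show at least one such $J$ avoids the forbidden residues. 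The step I expect to be hardest is ruling out the pathological configuration in which every candidate $J$ lands on the forbidden set: this would force many zero-sum subsets of $\pi(R)$ of a given size to have identical $C_n$-sum, a strong linearity condition that, because $n$ is odd and hence $\Hom(C_2^r,C_n)=0$, collapses to the $C_n$-coordinates of $R$ being essentially constant. Combining this rigidity with the Bialostocki form $H=a^{n-1}b^{n-1}$ should yield a contradiction, or else allow one to re-select the initial pair extraction so as to destroy that extremal form and return to the easy direction.
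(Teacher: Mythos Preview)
Your plan coincides with the paper's proof almost step for step: decompose $G\cong C_2^r\oplus C_n$, greedily extract pairs with equal $C_2^r$-image, reduce via $\mathsf s(C_n)=2n-1$ to the extremal configuration $k=2n-2$, $|R|=2^r-1$, $H=a^{n-1}b^{n-1}$, and then look for a length-$4$ subset $T$ of $R$ with $\pi$-sum zero whose $C_n$-sum avoids the single forbidden residue. So the architecture is right.

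The two points you flag as ``should'' are exactly where the real work sits, and your sketch there is not yet a proof. For the rigidity step, the claim you need is: if every length-$4$ subsequence $T\mid R$ with $\sigma(\pi(T))=0$ has the same $C_n$-sum, then all $C_n$-coordinates of $R$ are equal. The $\Hom(C_2^r,C_n)=0$ remark is suggestive but does not by itself prove this; the paper establishes it as a separate lemma by induction on $r$ (the base case $r=3$ is a direct computation over the seven nonzero elements of $C_2^3$, and the inductive step restricts to two parallel copies of $C_2^{r-1}\setminus\{0\}$ and then glues). You should expect to write something of comparable length.

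For the endgame after constancy, ``re-select the pair extraction'' is the right instinct but needs care. The paper's argument is: once $\psi|_R\equiv a$ with $4a=g$, either some nonzero $\pi$-fibre has size $\ge 3$, in which case swapping one element of a pair with the element of $R$ in that fibre forces (again via the $a^{n-1}b^{n-1}$ structure) the swapped elements to share $C_n$-coordinate, contradicting $2a\notin\{0,g\}$; or every nonzero fibre has size $1$, forcing $|W_0|=4n-4\ge 3n-1$ in the $\pi=0$ fibre, and two disjoint length-$n$ zero-sums over $C_n$ finish it. Your outline does not yet separate these cases or supply the swap contradiction, and the phrase ``return to the easy direction'' is too optimistic---re-selection does not raise $k$ above $2n-2$; rather it produces an immediate contradiction to the structure of $H$.
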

\medskip

If $G = C_2^{r-1} \oplus C_{2n}$ and $r \in [3,4]$, then we can provide precise results.

\begin{theorem} \label{Th1}
Let $n \ge 2$. 
\begin{enumerate}
\item $\eta(C_2^2\oplus C_{2n})=2n+4$ \ and \ $\mathsf s(C_2^2\oplus C_{2n})=4n+3$.

\smallskip
\item $\eta(C_2^3\oplus C_{2n})=2n+6$, \ and if $n \ge 36$, then \ $\mathsf s(C_2^3\oplus C_{2n})=4n+5$.
\end{enumerate}
\end{theorem}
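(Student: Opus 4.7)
The plan is to reduce everything to computing $\eta(G)$. Once $\eta(G)$ is known, the value of $\mathsf{s}(G)$ follows by combining the general chain \eqref{LOWETA} with a matching upper bound $\mathsf{s}(G) \le \eta(G) + \exp(G) - 1$ established by an extraction-and-combine argument. So the proof splits into three parts: (a) matching lower bounds via explicit constructions, (b) the upper bound on $\eta(G)$, which is the technical heart, and (c) the upper bound on $\mathsf{s}(G)$.

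For (a), let $g$ generate the $C_{2n}$-component and $e_1,\ldots,e_{r-1}$ be a basis of $C_2^{r-1}$, and consider
\[
S_0 \;=\; g^{2n-1} \cdot e_1 \cdots e_{r-1} \cdot (g+e_1) \cdots (g+e_{r-1}),
\]
of length $2n+2r-3$. In any zero-sum subsequence the $C_2^{r-1}$-part forces $e_i$ and $g+e_i$ to be chosen together for each $i$; writing $c_i\in\{0,1\}$ for this indicator and $a$ for the multiplicity of $g$ used, the $C_{2n}$-equation reads $a + \sum c_i \equiv 0 \pmod{2n}$, hence $a = 2n - \sum c_i$ and the total length is $a + 2\sum c_i = 2n + \sum c_i$, which exceeds $2n$ unless the subsequence is empty. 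Thus $\eta(G) \ge 2n+2r-2$, and appending $0^{2n-1}$ to $S_0$ produces a sequence of length $4n+2r-4$ whose only zero-sum subsequences have length $\ne 2n$, so $\mathsf{s}(G) \ge 4n+2r-3$.

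For (b), take a sequence $S$ over $G = C_2^{r-1} \oplus C_{2n}$ with $|S| = 2n+2r-2$ and consider the projection $\pi : G \twoheadrightarrow G/C_2^{r-1} \cong C_{2n}$. Since $\eta(C_{2n}) = 2n$, one iteratively strips off subsequences $T_1, T_2, \ldots$ of $S$ whose projections are zero-sum in $C_{2n}$ of length at most $2n$; their sums $\sigma(T_i)$ all lie in $C_2^{r-1}$, and the task reduces to finding a sub-collection with $\sigma$-sum zero and total length $\le 2n$. For $r=3$ the obstructing group $C_2^2$ has only four elements, so a pigeonhole combined with careful control on the lengths $|T_i|$ yields $\eta \le 2n+4$. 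For $r=4$ the obstructing group is $C_2^3$ with eight elements; a considerably more delicate analysis is needed, most naturally a case distinction based on the subgroup of $C_2^3$ spanned by the values $\sigma(T_i)$, together with structural information about the minimal short zero-sum subsequences in $C_{2n}$. This is the main obstacle of the theorem; the hypothesis $n \ge 36$ enters here, presumably to guarantee that enough disjoint short zero-sum subsequences can be extracted from $\pi(S)$ while keeping the cumulative length under control, and to eliminate sporadic small-$n$ configurations.

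For (c), assume the upper bound $\eta(G) \le 2n+2r-2$ just obtained, and let $|S| = \eta(G) + 2n - 1$. Repeatedly extract short zero-sum subsequences $T_1, T_2, \ldots$ of length $\le 2n$, which is possible as long as the remainder has length $\ge \eta(G)$; if some $|T_i| = 2n$ we are done, otherwise we obtain a multiset of lengths $\{|T_i|\} \subseteq [1, 2n-1]$ with $\sum |T_i| \ge 2n$. A subset-sum argument then selects a sub-collection of total length exactly $2n$, exploiting tight information about which lengths can actually occur — here the extremal structure derived in part (b) is re-used to restrict the profile of the $|T_i|$'s. This yields $\mathsf{s}(G) \le 4n+2r-3$, which together with part (a) completes the proof.
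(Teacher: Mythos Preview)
Your outline has the right shape but two load-bearing steps do not work as written, and you have pinned the $n\ge 36$ hypothesis to the wrong place.

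In (b) you project onto $C_{2n}$ and iteratively extract short zero-sum subsequences of $\pi(S)$. But $\eta(C_{2n})=2n$, so from $|S|=2n+2r-2$ you are guaranteed only \emph{one} such $T_1$; after removing it the remainder can have length as small as $2r-2$, far below $\eta(C_{2n})$, and the iteration stalls immediately. The paper projects in the opposite direction, onto the $2$-elementary part: writing $G\cong C_2^4\oplus C_n$ for odd $n$ (with an analogous $\theta$-kernel construction for even $n$), one uses that zero-sums in $C_2^4$ are abundant ($\mathsf D(C_2^4)=5$, $\eta(C_2^4)=16$) to decompose $S$ into many blocks of length $\le 2$ whose sums lie in the cyclic factor $C_n$, and then invokes the structure of extremal zero-sum-free sequences over $C_n$ (Lemma~\ref{cyclic}). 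The genuine work is controlling the leftover squarefree piece over $C_2^4$ (Lemmas~\ref{L1}--\ref{L4}, Proposition~\ref{LL1}, Corollary~\ref{xx1}); this is the substance your ``considerably more delicate analysis'' is hiding, and it applies for \emph{all} $n\ge 2$, not only $n\ge 36$.

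In (c) your subset-sum argument on the lengths $|T_i|$ is essentially the open conjecture $\mathsf s(G)=\eta(G)+\exp(G)-1$; nothing prevents all extracted $T_i$ from having, say, the same odd length, and no ``extremal structure from (b)'' has been established that would rule this out. The paper does not attempt this route. For part~1 it proves $\mathsf s(C_2^2\oplus C_{2n})\le 4n+3$ directly (odd $n$ via Theorem~\ref{Th2}, even $n$ by a separate argument), and $\eta$ then drops out of the chain~\eqref{LOWETA}. For part~2 it first proves $\eta=2n+6$ for every $n\ge 2$, and then deduces $\mathsf s=4n+5$ from the black-box Lemma~\ref{ETAF} (the result of \cite{FYS}), whose hypothesis $n\ge\max\{m|H|+1,\,4|H|+2m\}$ with $H=C_2^3$, $m=2$ gives exactly $n\ge 36$. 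So the threshold enters in passing from $\eta$ to $\mathsf s$, not in computing $\eta$.
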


\section{ Notations and Terminology}
Our notations and terminology are consistent with \cite{GG} and \cite{GE}. Let $\mathbb{N}$ denote the set of positive integers, $\mathbb{P}\subseteq \mathbb{N}$ the set of prime numbers and $\mathbb{N}_0=\mathbb{N}\cup\{0\}$. For real numbers $a, b\in \mathbb{R}$, we set $[a, b]=\{x\in \mathbb{Z}\mid a\leq x\leq b\}$.  Throughout this paper, all abelian groups will be written additively, and for $n\in \mathbb{N}$, we denote by $C_n$ a cyclic group with $n$ elements.

Let $G$ be a finite abelian group. We know that $|G|=1$ or $G\cong C_{n_1}\oplus \cdots \oplus C_{n_r}$ with $1<n_1\mid \cdots \mid n_r\in \mathbb{N}$, where $r=\mathsf{r}(G)\in \mathbb{N}$ is the \emph{rank} of $G$ and $n_r={\exp}(G)$ is the \emph{exponent} of $G$. We denote $|G|$ the \emph{cardinality} of $G$, and $\mathsf{ord}(g)$ the \emph{order} of elements $g\in G$. For convenience, denote $C_n^r=C_{n_1}\oplus \cdots \oplus C_{n_r}$ if $n_1=\cdots =n_r=n\in \mathbb{N}$.

Let $\mathscr{F}(G)$ be the free abelian monoid, multiplicatively written, with basis $G$. The elements of $\mathscr{F}(G)$ are called \emph{sequences }over $G$. A  sequence $S\in \mathscr{F}(G)$ will be written in the form
$$S=g_1\cdot \ldots \cdot g_l=\prod_{g\in G}g^{\mathsf v_{g}(S)}$$
with $\mathsf v_{g}(S)\in \mathbb{N}_0$ for all $g\in G$. We call $\mathsf v_{g}(S)$ the multiplicity of $g$ in $S$, and if $\mathsf v_{g}(S)>0$ we say that $S$ contains $g$. If for all $g\in G$ we have $\mathsf v_{g}(S)=0$, then we call $S$ the \emph{empty sequence} and denote $S=1\in \mathscr{F}(G)$. A sequence $S$ is called squarefree if $\mathsf v_{g}(S)\leq 1$ for all $g\in G$. Apparently, a squarefree sequence over G can be considered as a subset of $G$. Let $g_0\in G$,  we denote
$$g_0+S=(g_0+g_1)\cdot \ldots \cdot (g_0+g_l).$$

A sequence $S_1\in \mathscr{F}(G)$ is called a subsequence of $S$ if $\mathsf v_{g}(S_1)\leq \mathsf v_{g}(S)$ for all $g\in G$, and denoted by $S_1\mid S$. If $S_1\mid S$, we denote
$$S\cdot S_{1}^{-1}=\prod_{g\in G}g^{\mathsf v_g(S)-\mathsf v_g(S_1)}\in \mathscr{F}(G).$$
If $S_1$ is not a subsequence of $S$, we write $S_1\nmid S$.
Let $S_1, S_2\in \mathscr{F}(G)$, we set
$$S_1\cdot S_2=\prod_{g\in G}g^{\mathsf v_{g}(S_1)+\mathsf v_{g}(S_2)}\in \mathscr{F}(G).$$
Furthermore, we call $S_1,\ldots,S_t\, (t\ge 2)$ are disjoint subsequences of $S$, if $S_1\cdot\ldots\cdot S_t\t S$.

For a sequence
$$S=g_1\cdot \ldots \cdot g_l=\prod_{g\in G}g^{\mathsf v_g(S)}\in \mathscr{F}(G),$$
we list the following definitions
\begin{itemize}
\item[] $|S|=l=\sum_{g\in G}\mathsf v_g(S)\in \mathbb{N}_0$ the \emph{length} of $S$,
\item[] $\mathsf{supp}(S)=\{g\in G\mid \mathsf v_g(S)>0\}\subseteq G$ the \emph{support} of $S$,
\item[] $\sigma(S)=\sum_{i=1}^{l}g_i=\sum_{g\in G}\mathsf v_g(S)g\in G$ the \emph{sum} of $S$,
\item[] $\sum(S)=\{\sum_{i\in I}g_i\mid I\subseteq [1, l] \mbox{ with } 1\leq |I|\leq l\}$ the set of all \emph{subsums} of $S$,

\end{itemize}

The sequence $S$ is called
\begin{itemize}
\item[$\bullet$] \emph{zero-sum free} if $0\notin \sum(S)$,
\item[$\bullet$] a \emph{zero-sum sequence} if $\sigma(S)=0$,
\item[$\bullet$] a \emph{short zero-sum sequence} if $\sigma(S)=0$ and $|S| \in [1, {\exp}(G)]$.

\end{itemize}

Every map of abelian groups $\phi: G\rightarrow H$ extends to a map from the sequences over $G$ to the sequences over $H$ by setting $\phi(S)=\phi(g_1)\cdot \ldots \cdot \phi(g_l)$. If $\phi$ is a homomorphism, then $\phi(S)$ is a zero-sum sequence if and only if $\sigma(S)\in \mathsf{ker}(\phi)$.

Let $G=H\oplus K$ be a finite abelian group. Let $\phi$ denote the projection from  $G$ to  $H$ and $\psi$ denote the projection from $ G$ to  $K$. If $S\in \mathscr{F}(G)$ such that $\sigma(\phi(S))=0$, then $\sigma(S)=\sigma(\psi(S))\in \ker(\phi)=K$.

\begin{lemma} \label{cyclic}

Let $G$ be a cyclic group of order $n\geq 2$.
\begin{enumerate}
\item A sequence $S\in \mathscr{F}(G)$ is zero-sum free of length $|S|=n-1$ if and only if $S=g^{n-1}$ for some $g\in G$ with $\mathsf{ord}(g)=n$.

\smallskip
\item Let  $S$ be a zero-sum free sequence over $G$ with length greater than $n/2$. Then there exists an element $g\in G$ of order $n$  such that  $$S=(k_1g)\cdot\ldots\cdot(k_{|S|}g),$$ where $1\le k_1\le\cdots\le k_{|S|}$, $k=k_1+\cdots+k_{|S|}<n$, and $\Sigma(S)=\{g,2g,\ldots,kg\}$.

\smallskip
\item  Let $S\in \mathscr{F}(G)$ a sequence of length $|S|=\mathsf s(G)-1$. Then the following statements are equivalent:
      \begin{itemize}
      \item[(a)] $S$ has no zero-sum subsequence of length $n$.
      \item[(b)] $S=(gh)^{n-1}$ where $g, h\in G$ with $\mathsf{ord}(g-h)=n$.
      \end{itemize}
\end{enumerate}
\end{lemma}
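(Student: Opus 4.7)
My plan handles the three parts in reverse order of difficulty. Part~(2) is the Savchev--Chen inverse theorem on long zero-sum free sequences in cyclic groups, which I would cite directly. Part~(1) then follows as an immediate corollary: the ``if'' direction is trivial (the subsums $g, 2g, \ldots, (n-1)g$ exhaust $G \setminus \{0\}$), while for the converse, applying~(2) to $S$ of length $n-1 > n/2$ (the case $n=2$ being trivial) yields $S = (k_1 g)\cdots(k_{n-1}g)$ with each $k_i \geq 1$ and $\sum k_i < n$, and these inequalities force $k_1 = \cdots = k_{n-1} = 1$, i.e., $S = g^{n-1}$.

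For part~(3), the implication (b)$\Rightarrow$(a) is a direct calculation: every length-$n$ subsequence of $g^{n-1} h^{n-1}$ has the form $g^a h^{n-a}$ with $a \in [1, n-1]$, whose sum $a(g-h) + n h = a(g-h)$ is nonzero because $\mathsf{ord}(g-h) = n$. For the substantive direction (a)$\Rightarrow$(b), my plan has three steps. First, adjoining a $0$ to $S$ produces a sequence of length $\mathsf{s}(C_n) = 2n - 1$, so by the Erd\H os--Ginzburg--Ziv theorem it contains a length-$n$ zero-sum subsequence, which must involve the appended $0$; hence $S$ itself has a zero-sum subsequence $T_1$ of length $n-1$. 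Second, I argue that $T_1$ may be chosen so that its complement $U = S \cdot T_1^{-1}$ (also of length $n-1$) is zero-sum free; then by~(1), $U = g^{n-1}$ for a generator $g$. Third, the absence of length-$n$ zero-sums in $S = U \cdot T_1$ is equivalent to $\sigma(R) \neq c g$ for every length-$c$ subsequence $R$ of $T_1$ with $c \in [1, n-1]$; shifting by $-g$ renders $-g + T_1$ zero-sum free of length $n-1$, to which~(1) applies again to give $-g + T_1 = (g')^{n-1}$ for a generator $g'$. Setting $h = g + g'$, we obtain $S = g^{n-1} h^{n-1}$ with $\mathsf{ord}(g - h) = \mathsf{ord}(-g') = n$.

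The main obstacle is the second step of (a)$\Rightarrow$(b): ensuring that the complement $U$ can be arranged to be zero-sum free, equivalently, that the Davenport constant is attained on $S$. If an initial $U$ contains a nonempty zero-sum subsequence $V$, necessarily of length $\geq 2$ (since $|V| = 1$ would force $V = 0$ and then $T_1 \cdot V$ would be a length-$n$ zero-sum in $S$, contradicting (a)), one swaps $V$ into $T_1$ in exchange for a matching-length zero-sum subsequence of $T_1$, and iterates. Combining this exchange with the Savchev--Chen structural description~(2) of long zero-sum free candidates rules out the remaining pathological configurations: any truly bad case produces a length-$n$ zero-sum in $S$ by pigeonholing the admissible subsequence-sum lengths.
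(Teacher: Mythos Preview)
The paper does not actually prove this lemma: its entire proof is the single citation ``See \cite[Cor.~2.1.4, Th.~5.1.8, Prop.~5.1.12]{GE}.'' So there is no argument in the paper to compare against, and your handling of parts~(1), (2), and the direction (b)$\Rightarrow$(a) of~(3) is fine and amounts to the same kind of appeal to the literature.

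There is, however, a genuine gap in step~2 of your plan for (a)$\Rightarrow$(b). The claim that $T_1$ may be chosen so that the complement $U=S\cdot T_1^{-1}$ is zero-sum free is simply false, and your swap procedure does not repair it. Take $n=4$ and $S=1^{3}\cdot 2^{3}$ over $\mathbb Z/4\mathbb Z$. One checks that no length-$4$ subsequence of $S$ has sum zero, so hypothesis~(a) holds. The \emph{only} zero-sum subsequence of $S$ of length $n-1=3$ is $T_1=1\cdot 1\cdot 2$, which forces $U=1\cdot 2\cdot 2$; but $U$ contains the zero-sum $V=2\cdot 2$. Your swap asks for a length-$2$ zero-sum subsequence of $T_1$ to trade for $V$, and there is none ($1+1=2$, $1+2=3$). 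Since every admissible $T_1$ gives the same $U$, no iteration helps, and your step~3 (which presupposes $U=g^{n-1}$) never gets started. The closing sentence about ``combining this exchange with Savchev--Chen'' and ``pigeonholing'' is too vague to count as an argument; in this example there is nothing to pigeonhole.

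A route that does work is to exploit translation invariance of~(a) (for $|T|=n$ one has $\sigma(a+T)=\sigma(T)$ in $C_n$): translate so that some element of maximal multiplicity becomes $0$, write $S=0^{m}\cdot T$ with $0\nmid T$, observe that any zero-sum $W\mid T$ with $|W|\in[n-m,\,n]$ would yield $0^{n-|W|}\cdot W$ of length $n$, and then argue separately that $m=n-1$ is forced; part~(1) then finishes as you intended. Alternatively, do as the paper does and cite the reference.
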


\begin{proof}
See \cite[Cor. 2.1.4, Th.5.1.8, Prop.5.1.12]{GE}.
\end{proof}

\begin{lemma}(\cite[Theorem 1.2]{FYS})\label{ETAF}
Let $G=H \oplus C_{mn}$ be a finite abelian group where $H \subseteq G$ is a subgroup  with ${\exp}(H)=m\geq 2$ and $n \in \N$. If $n\geq \max\{m|H|+1, 4|H|+2m\}$, then $\mathsf s(G)=\eta(G)+{\exp}(G)-1$.
\end{lemma}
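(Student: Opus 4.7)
The lower bound $\mathsf{s}(G)\ge\eta(G)+\exp(G)-1$ is just the rightmost nontrivial inequality in (1.1), so I focus on the matching upper bound. Given $S\in\mathscr{F}(G)$ with $|S|=\eta(G)+mn-1$ (where $\exp(G)=mn$), the task is to extract a zero-sum subsequence of length exactly $mn$.

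My first move is an iterated extraction in $H$. Let $\pi\colon G\to H$ denote the canonical projection. Since $\exp(H)=m$, the constant $\mathsf{s}(H)$ is finite and bounded in terms of $m$ and $|H|$. As long as the unused portion of $S$ has length at least $\mathsf{s}(H)$, applying $\mathsf{s}(H)$ to its image under $\pi$ yields a subsequence of length exactly $m$ whose $\pi$-image is zero in $H$. Iterating produces disjoint chunks $T_1,\dots,T_k\mid S$ with $|T_j|=m$ and $a_j:=\sigma(T_j)\in C_{mn}$ for all $j$. The hypothesis $n\ge m|H|+1$ is calibrated so that $k$ exceeds $n$ with ample room after the extraction terminates.

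The second, decisive phase takes place inside the cyclic group $C_{mn}$: one wants $I\subseteq[1,k]$ with $|I|=n$ and $\sum_{i\in I}a_i=0$ in $C_{mn}$, for then $\prod_{i\in I}T_i$ has length $mn$ and zero sum in $G$. If the auxiliary sequence $A=a_1\cdots a_k\in\mathscr{F}(C_{mn})$ had no such length-$n$ zero-sum subsequence, then Lemma 2.1 (especially parts (2) and (3)) would impose very rigid structural constraints on $A$: the $a_j$'s would have to concentrate on at most two elements of $C_{mn}$ of a prescribed form. One then exploits this structure by revisiting the first phase with a slightly different choice of chunks, or by absorbing short zero-sum subsequences of the leftover $R$ (of length $<\eta(G)$) into the rigid pattern of $a_j$'s, to produce either a length-$mn$ zero-sum in $G$ directly or a contradiction with the hypothesis $n\ge 4|H|+2m$.

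The main obstacle is precisely this second phase: since $n$ is a proper divisor of $\exp(C_{mn})$ when $m\ge 2$, no standard off-the-shelf invariant (the EGZ constant $\mathsf{s}(C_{mn})=2mn-1$ in particular) delivers a length-exactly-$n$ zero-sum among the $a_j$'s. Controlling the extremal cases via Lemma 2.1(3) and the structural description of long zero-sum-free sequences in cyclic groups is the heart of the argument, and the two lower bounds on $n$ are tailored exactly so that this structural analysis succeeds.
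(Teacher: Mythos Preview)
The paper does not prove this lemma at all: it is stated with a citation to \cite[Theorem 1.2]{FYS} and used as a black box in the proof of Theorem~\ref{Th1}.2. There is therefore no in-paper argument to compare your sketch against.

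As for the sketch itself, your two-phase outline (extract length-$m$ blocks with $\pi$-sum zero, then find $n$ of the resulting sums $a_j\in C_{mn}$ adding to zero) is the natural strategy and is indeed how results of this type are proved. But what you have written is not a proof: you correctly identify that Phase~2 is the crux, and then you do not carry it out. Saying that if no length-$n$ zero-sum exists among the $a_j$ then ``Lemma~2.1 (especially parts (2) and (3)) would impose very rigid structural constraints'' is not accurate as stated---Lemma~\ref{cyclic}.3 concerns sequences of length $\mathsf s(C_n)-1=2n-2$ with no zero-sum of length $n$ in $C_n$, not sequences in $C_{mn}$ avoiding zero-sums of length $n$---and the subsequent phrases (``revisiting the first phase with a slightly different choice of chunks, or by absorbing short zero-sum subsequences of the leftover $R$'') are programmatic rather than arguments. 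The actual work in \cite{FYS} lies precisely in making this step rigorous, and the two numerical hypotheses on $n$ are artifacts of how that analysis is organized; your sketch does not explain where either threshold enters. So: right architecture, but the load-bearing step is missing.
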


\begin{lemma} \label{ETA}
Let $G=H\oplus C_n$ be a finite abelian group  where $H\subseteq G$
is a subgroup with $\exp (H) \mid \exp (G)  = n$. Then
\[
\eta (G) \ge 2(\mathsf D(H)-1) + n \quad \text{ and} \quad  \mathrm \mathsf s(G) \ge 2(\mathsf D(H)-1) + 2n-1 \,.
\]
In particular, we have for all $ r, n\in \mathbb{N}$,
\[
\eta(C_2^{r-1}\oplus C_{2n})\geq 2n+2r-2 \quad \text{and} \quad \mathsf s(C_2^{r-1}\oplus C_{2n})\geq 4n+2r-3 \,.
\]
\end{lemma}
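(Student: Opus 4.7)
The plan is to exhibit explicit extremal sequences for both invariants and then specialize. By definition of $\mathsf D(H)$, fix a zero-sum free sequence $T = h_1 \cdot \ldots \cdot h_{\mathsf D(H)-1} \in \mathscr F(H)$ of maximal length, and identify $H$ and $C_n$ with the two summands of $G = H \oplus C_n$. Let $g \in C_n$ have order $n$. For the $\eta$-bound I would take
\[
S \;=\; T \cdot (T + g) \cdot g^{n-1} \in \mathscr F(G)\,,
\]
of length $2(\mathsf D(H)-1) + n - 1$, and for the $\mathsf s$-bound the extended sequence
\[
S' \;=\; T \cdot (T + g) \cdot g^{n-1} \cdot 0^{n-1} \in \mathscr F(G)\,,
\]
of length $2(\mathsf D(H)-1) + 2n - 2$. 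The desired inequalities will follow once I verify that $S$ has no nonempty zero-sum subsequence of length at most $n$ and that $S'$ has no zero-sum subsequence of length exactly $n$.

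For the verification, any subsequence $W$ of $S$ or $S'$ decomposes uniquely as
\[
W \;=\; W_1 \cdot W_2 \cdot g^k \cdot 0^j,
\]
with $W_1 \mid T$, $W_2 \mid (T + g)$, and $0 \le k, j \le n - 1$ (with $j = 0$ in the case of $S$). Since $W_2 \mid (T + g)$, there is a unique $W_2' \mid T$ with $W_2 = g + W_2'$, so setting $a = |W_1|$ and $b = |W_2'|$ one computes
\[
\sigma(W) \;=\; \sigma(W_1 \cdot W_2') + (b + k)\, g\,,
\]
and $\sigma(W) = 0$ in $G$ is equivalent to $\sigma(W_1 \cdot W_2') = 0$ in $H$ together with $b + k \equiv 0 \pmod n$. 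The length restriction on $|W|$ in each setting forces $b + k \in \{0, n\}$, and in either subcase either $W_1$ or $W_2'$ is empty, so $W_1 \cdot W_2'$ is a genuine subsequence of $T$ and zero-sum freeness of $T$ forces $W_1 \cdot W_2' = 1$. A quick check that the residual $g^k 0^j$ cannot have the required length (because only $n - 1$ copies of $g$, respectively of $g$ and of $0$, are available) then completes the case analysis.

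The only delicate point is that \emph{a priori} $W_1 \cdot W_2' \in \mathscr F(H)$ need not be a subsequence of $T$, since its multiplicities could exceed those of $T$; one must confirm that the surviving subcases always force one of $W_1$ or $W_2'$ to be trivial before invoking zero-sum freeness. The ``in particular'' assertion then follows immediately by taking $H = C_2^{r-1}$, for which $\mathsf D(C_2^{r-1}) = r$, and replacing $n$ by $\exp(C_2^{r-1} \oplus C_{2n}) = 2n$.
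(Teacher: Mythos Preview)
Your argument is correct. The paper's own proof is simply a citation of \cite[Lemma 3.2]{Edel2007} for the main inequalities, followed by the specialization $\mathsf D(C_2^{r-1})=r$. What you have written is the standard explicit construction that underlies that cited result: exhibit a sequence of length one less than the claimed lower bound having no zero-sum subsequence of the required type. So this is not a genuinely different route; it is a self-contained proof of what the paper leaves to the literature.

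Your case analysis is clean. The key observation that $b+k\le |W|\le n$ (in both the $\eta$ and the $\mathsf s$ settings) forces $b+k\in\{0,n\}$ is exactly right, and in each subcase one of $W_1$, $W_2'$ is forced to be empty, which legitimizes the appeal to zero-sum freeness of $T$ that you flagged as the delicate point. The residual checks ($k\le n-1$, $j\le n-1$) then kill the remaining possibilities. The specialization to $H=C_2^{r-1}$ with the role of $n$ played by $2n$ is also handled correctly.
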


\begin{proof}
The main statement follows from \cite[Lemma 3.2]{Edel2007}. If $H = C_2^{r-1}$, then $\mathsf D (H)=r$ and the second statement follows.
\end{proof}

\begin{lemma}\label{SUM} Let $W$ be a squarefree sequence of length $|W|\geq 9$ over $C_2^4\setminus \{0\}$ . Then for any element $w\t W$,  there exist $|W|-8$ disjoint subsequences $R_1, \ldots, R_{|W|-8}$ of $Ww^{-1}$ such that $\sigma(R_i)=w$ and $|R_i|=2$ for all $i\in [1, |W|-8]$. In particular, $W$ has at least $|W|-8$ distinct zero-sum subsequences of length $3$ containing $w$.
\end{lemma}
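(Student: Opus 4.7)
Fix $w\t W$ and view $W$ as a subset of $C_2^4\setminus\{0\}$ (since it is squarefree), and similarly identify $Ww^{-1}$ with $W\setminus\{w\}$. The key observation is that in $C_2^4$, an unordered pair $\{a,b\}$ with $a\ne b$ satisfies $a+b=w$ exactly when $b=a+w$; and since $2a=0\ne w$, the element $a+w$ is always distinct from $a$. Thus the map $a\mapsto a+w$ is a fixed-point-free involution on $C_2^4\setminus\{0,w\}$, and its orbits partition these $14$ elements into exactly $7$ disjoint pairs, each pair summing to $w$. Call these the \emph{$w$-pairs}.

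Every element of $W\setminus\{w\}$ lies in exactly one $w$-pair. Let $c$ denote the number of $w$-pairs that are entirely contained in $W\setminus\{w\}$, and let $i$ denote the number that contain exactly one element of $W\setminus\{w\}$. Then the remaining $w$-pairs are empty, so
\[
c+i\le 7\quad\text{and}\quad 2c+i=|W|-1\,.
\]
Subtracting gives $c\ge |W|-8$. Taking $R_1,\ldots,R_c$ to be these $c$ complete $w$-pairs yields $c\ge|W|-8$ disjoint length-$2$ subsequences of $Ww^{-1}$, each with sum $w$, which is exactly what is required.

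For the ``in particular'' clause, the sequences $R_1w,\ldots,R_{|W|-8}w$ are zero-sum subsequences of $W$ of length $3$ each containing $w$, and they are pairwise distinct because the underlying pairs $R_i$ are pairwise disjoint (hence unequal) subsets of $W\setminus\{w\}$.

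The argument is essentially a one-line counting estimate on a perfect matching in $C_2^4$; there is no real obstacle, only the need to notice that $C_2^4\setminus\{0,w\}$ splits into $7$ disjoint $w$-pairs and then to apply the trivial inequality $c\ge|W|-1-(7-c)$, i.e.\ $c\ge|W|-8$.
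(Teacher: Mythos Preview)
Your proof is correct and follows essentially the same approach as the paper: both arguments partition $C_2^4\setminus\{0,w\}$ into the seven pairs $\{a,a+w\}$ (the paper's $T_i$, your $w$-pairs) and then count how many of these pairs lie entirely inside $W\setminus\{w\}$. Your bookkeeping via the equations $c+i\le 7$ and $2c+i=|W|-1$ is a slightly cleaner repackaging of the paper's inequality $|\{T_i:T_i\nmid W\}|\le 15-|W|$, but the underlying idea is identical.
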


\begin{proof} For any element $w\t W$,  we can always find a set $A=\{a_1,\ldots,a_7\}\subseteq C_2^4\setminus\{0\}$ satisfying  $C_2^4\setminus\{0\}=\{w\}\cup A\cup w+A$. Let $T_i=a_i\cdot(w+a_i)$ for each $i\in [1,7]$.  Therefore $w=\sigma(T_1)=\ldots=\sigma(T_7)$ and $\mathsf{supp}(wT_1\cdot\ldots\cdot T_7)=C_2^4\setminus \{0\}$.

Since \begin{align*}
&\left|\left\{T_i \ \Big|\  i\in [1,7]\text{ and } T_i \nmid W\right\}\right|\le \left|\left\{g\in C_2^4\setminus \{0\}\ \Big|\ g\t T_i \text{ for some }i\in [1,7] \text{ and } g\nmid W \right\}\right|\\
\le& |C_2^4\setminus(\{0\}\cup \supp(W))|= 15-|W|,
\end{align*}
we have that $\left|
\left\{T_i \ \Big|\  i\in [1,7]\text{ and } T_i \t W\right\}\right|\ge 7-(15-|W|)=|W|-8\ge 1$.
In particular, for each  $i\in [1,7]$, if $T_i\t W$, then $wT_i$ is a zero-sum subsequence of $W$.
\end{proof}

\section{The proof of Theorem \ref{Th2} and Theorem \ref{Th1}.1}

\begin{lemma}\label{Supp} Let $G=H\oplus K$ be a finite abelian group, where $H\cong C_2^r$ with $r\geq 3$ a positive integer and $K\cong C_n$ with $n\geq 3$ an odd integer. Denote $\phi_r$ to be the projection from  $G$ to  $H$ and $\psi_r$ to be the projection from $ G$ to  $K$.

 Let $S_r$ be a sequence over $C_2^r\oplus C_n$ such that $\phi_r(S_r)$ is a squarefree sequence  with $\mathsf{supp}(\phi_r(S_r))=H\setminus \{0\}$. If the following property {\bf P1} holds,
 \begin{itemize}
 \item[\bf P1.]  For any two distinct subsequences $T_1, T_2$ of $ S_r$ with $|T_1|=|T_2|=4$ and  $\phi_r(\sigma(T_1))=\phi_r(\sigma(T_2))=0$,  we have that  $\sigma(T_1)=\sigma(T_2)$.
 \end{itemize}
  then  $|\mathsf{supp}(\psi_r(S_r))|=1$.
\end{lemma}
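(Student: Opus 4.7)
We parameterize $S_r = \prod_{h \in H \setminus \{0\}} (h, k_h)$ with $k_h \in K$, which is legitimate because $\phi_r(S_r)$ is squarefree with support $H \setminus \{0\}$. The conclusion $|\supp(\psi_r(S_r))| = 1$ amounts to showing that the assignment $h \mapsto k_h$ is constant.

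The first step is to note that length-$4$ subsequences $T$ of $S_r$ with $\phi_r(\sigma(T)) = 0$ correspond bijectively to $4$-subsets $\{h_1, h_2, h_3, h_4\} \subseteq H \setminus \{0\}$ satisfying $h_1 + h_2 + h_3 + h_4 = 0$ in $H$, and that for such $T$ we have $\sigma(T) = \bigl(0, \sum_i k_{h_i}\bigr) \in K$. Such $4$-subsets exist because $r \geq 3$ allows a choice of three independent $e_1, e_2, e_3 \in H$ together with $e_1+e_2+e_3$. Property \textbf{P1} therefore asserts that $\sum_{h \in T} k_h$ takes a single common value $c \in K$ across all such $4$-subsets.

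The heart of the argument is to translate this statement about $4$-subsets into a statement about pair-sums. For each $s \in H \setminus \{0\}$, the pairs $\{g, g+s\}$ with $g \in H \setminus \{0, s\}$ summing to $s$ partition $H \setminus \{0, s\}$ into $2^{r-1} - 1 \geq 3$ disjoint pairs (using $r \geq 3$). Choosing any three such pairs $P_1, P_2, P_3$, each union $P_i \cup P_j$ ($i < j$) is a $4$-subset summing to $0$ in $H$, so by the previous paragraph $k(P_i) + k(P_j) = c$ for all $i < j$. Solving this $3 \times 3$ system gives $k(P_1) = k(P_2) = k(P_3) = c/2$, where $c/2 \in K$ is meaningful because $n$ is odd. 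For any additional pair $P$ summing to $s$, pairing it with $P_1$ in the same way yields $k(P) = c/2$ as well.

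Consequently $k_a + k_b = c/2$ for all distinct nonzero $a, b \in H$. Fixing three distinct nonzero elements $h_1, h_2, h_3$ (available since $r \geq 3 > 1$), the equalities $k_{h_1} + k_{h_2} = c/2 = k_{h_1} + k_{h_3}$ give $k_{h_2} = k_{h_3}$, and varying the indices shows that $k_h$ takes a single value for all $h \in H \setminus \{0\}$, which is exactly $|\supp(\psi_r(S_r))| = 1$.

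The principal obstacle is the well-definedness of the pair-sum invariant: without three disjoint pairs summing to the same $s$, the pair-sum $k_a + k_b$ is not pinned down. The hypothesis $r \geq 3$ is precisely what guarantees this (it is also what is needed to produce a non-trivial zero-sum $4$-subset in the first place), and the hypothesis that $n$ is odd is what permits the division by $2$ in the final step.
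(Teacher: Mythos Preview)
Your proof is correct and takes a genuinely different route from the paper's. The paper proceeds by induction on $r$: the base case $r=3$ is handled by writing out seven explicit $4$-subsets of $C_2^3\setminus\{0\}$ with zero sum and solving the resulting linear system; the inductive step decomposes $C_2^{d+1}\setminus\{0\}$ as $A_d\cup(e_{d+1}+A_d)\cup\{e_{d+1}\}$, applies the induction hypothesis to each half to get constants $x$ and $y$, and then uses three well-chosen $4$-subsets to force $x=y=c_{e_{d+1}}$.

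Your argument avoids induction entirely. By observing that any two disjoint pairs with the same nonzero sum $s\in H$ form a zero-sum $4$-subset, and that there are at least three such pairs when $r\ge3$, you derive that every pair-sum $k_a+k_b$ equals the single value $c/2$ (using $n$ odd to invert $2$); constancy of $h\mapsto k_h$ then follows immediately. This is shorter and more conceptual: it treats all $r\ge3$ uniformly and makes the roles of both hypotheses ($r\ge3$ for three disjoint pairs, $n$ odd for division by $2$) completely transparent. The paper's inductive approach, by contrast, is more concrete and computational, and its base case could in principle be reused for finer structural statements, but for the lemma as stated your direct argument is cleaner.
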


\begin{proof}
 We proceed by induction on $r$.

Suppose that $r=3$. Let $(e_1,e_2,e_3)$ be a basis of $H$, and $e$ be a basis of $K$.

 Since $\phi_3(S_3)$ is a squarefree sequence with $\mathsf{supp}(\phi_3(S_3))=H\setminus \{0\}$, we can assume $S_3=g_1g_2g_3g_4g_5g_6g_7$,
where
\begin{align*}g_1=e_1+a_1e,&\quad g_2=e_2+e_3+a_2e,\quad g_3=e_2+ a_3e, \quad g_4=e_1+e_3+ a_4e ,\\
g_5=e_3+ a_5e ,&\quad g_6=e_1+e_2+a_6e ,\quad  g_7=e_1+e_2+e_3+ a_7e,\text{ and }\ a_1, \ldots , a_7\in [0,n-1].
\end{align*}

By the property {\bf P1} and
\begin{align*}
&\phi_3(g_1+g_3+g_5+g_7)
=\phi_3(g_1+g_4+g_6+g_7)
=\phi_3(g_2+g_3+g_6+g_7)
=\phi_3(g_2+g_4+g_5+g_7)\\
=&\phi_3(g_1+g_2+g_3+g_4)
=\phi_3(g_1+g_2+g_5+g_6)
=\phi_3(g_3+g_4+g_5+g_6)=0,
\end{align*}
  we have that \begin{align*}
  g_1+g_3+g_5+g_7
  &=g_1+g_4+g_6+g_7
  =g_2+g_3+g_6+g_7
  =g_2+g_4+g_5+g_7\\
  &=g_1+g_2+g_3+g_4
  =g_1+g_2+g_5+g_6
  =g_3+g_4+g_5+g_6.
  \end{align*}
By easily calculation,  we obtain that $a_1=a_2=a_3=a_4=a_5=a_6=a_7$, which implies that $|\mathsf{supp}(\psi_3(S_3))|=1$.

\medskip
Suppose that the conclusion is correct for $r=d\geq 3$.
We want to prove the conclusion is also correct for   $r=d+1$.

 Let $(e_1,\dots,e_{d+1})$ be a basis of $H$ and $e$ be a basis of $K$. Denote by $A_i=\left<e_1,\ldots,e_i\right>\setminus \{0\}$, where $i\in[1,d+1]$. Then $A_{d+1}=A_d\cup (e_{d+1}+A_d)\cup \{e_{d+1}\}$. Since $\phi_{d+1}(S_{d+1})$ is a squarefree sequence with  $\mathsf{supp}(\phi_{d+1}(S_{d+1}))=H\setminus \{0\}$, we can assume $S_{d+1}=\prod_{u\in A_{d+1}}u+c_ue$, where $c_u\in [0,n-1]$ for each $u\in A_{d+1}$.

Let
$$W_1=\prod_{u\in A_d}u+c_ue, \quad   W_2=\prod_{u\in e_{d+1}+A_d}u-e_{d+1}+c_ue,\quad  H'=\left<e_1,\ldots,e_d\right>, \text{ and } G'=H'\oplus K .$$
Then $W_1,W_2$ are sequences over $G'$ and $S_{d+1}=W_1\cdot (e_{d+1}+W_2)\cdot (e_{d+1}+c_{e_{d+1}}e)$. Since $\mathsf r(H')=d$, $\phi_{d+1}\big|_{G'}$ is the projection from $G'$ to $H'$, and $\psi_{d+1}\big|_{G'}$ is the projection from $G'$ to $K$, we obtain that
$W_1, W_2$ satisfy the property {\bf P1} for $r=d$ which implies that
 $|\supp(\psi_{d+1}(W_1))|=|\supp(\psi_{d+1}(W_2))|=1$. Therefore we can assume that  $c_u=x$ for all $u\in A_d$  and $c_u=y$ for all $u\in e_{d+1}+A_d$ where $x,y\in [0,n-1]$.

Let $B=\{0,e_1,e_2,e_1+e_2\}$ and  $T_1=\prod_{u\in e_3+ B}u+c_ue$, $T_2=\prod_{u\in e_{d+1}+e_3+B}u+c_ue$, $T_3=\prod_{u\in e_{d+1}+B}u+c_ue$. Then $T_1T_2T_3\t S_{d+1}$ and  $\phi_{d+1}(\sigma(T_1))=\phi_{d+1}(\sigma(T_2))=\phi_{d+1}(\sigma(T_3))=0$. By the property {\bf P1} and $|T_1|=|T_2|=|T_3|=4$, we obtain that  $\psi_{d+1}(\sigma(T_1))=\psi_{d+1}(\sigma(T_2))=\psi_{d+1}(\sigma(T_3))$ and hence  $$4x\equiv 4y\equiv 3y+c_{e_{d+1}}\pmod n.$$ It follows that $x=y=c_{e_{d+1}}$ which implies that $|\supp(\psi(S_{d+1}))|=1$.
\end{proof}

\medskip
\begin{proof}[\bf Proof of Theorem \ref{Th2}] Let $G=H\oplus K$ be a finite abelian group, where $H\cong C_2^r$ with $r\geq 3$ a positive integer and $K\cong C_n$ with $n\geq 3$ an odd integer. Denote $\phi$ to be the projection from  $G$ to  $H$ and $\psi$ to be the projection from $ G$ to  $K$.

Let $S$ be a sequence over $G$ with $|S|=4n+2^r-5$. Assume to the contrary that $S$ contains no zero-sum subsequence of length $2n$.

Suppose that $\phi(G)=H=\{h_0,h_1,\ldots,h_{2^r-1}\}$. We can assume that
$$\phi(S)=h_0^{n_0}\cdot \ldots \cdot h_{2^r-1}^{n_{2^r-1}}\mbox{ and }S=W_0\cdot \ldots \cdot W_{2^r-1}$$
where $n_0, \ldots, n_{2^r-1}\in \mathbb{N}_0$, and $\phi(W_i)=h_{i}^{n_i}$ for all $i\in [0, 2^r-1]$.

Then $S$ allows a product decomposition $S=S_1\cdot \ldots \cdot S_{k}\cdot S_0$  satisfying that  $\phi(S_0)$ is squarefree, and  for each $i\in [1, k]$, $|S_i|=2$ and $\sigma(\phi(S_i))=0$. Therefore $S_i\t W_j$ for some $j\in [0,2^r-1]$.

Since $4n+2^r-5=|S|=2k+|S_0|\le 2k+2^r$, we obtain that $k\ge 2n-2$.
By our assumption, $\sigma(S_1)\cdot\ldots\cdot \sigma(S_k)\in \mathscr F(\ker(\phi))$ has no subsequence of length $n$. Therefore by Lemma  \ref{cyclic}.3, we have that $k=2n-2$, $|S_0|=2^r-1$ and $$\sigma(S_1)\cdot \ldots \cdot \sigma(S_{2n-2})=g^{n-1}{g_1}^{n-1},$$
where $g, g_1\in \mathsf{ker}(\phi)$ and $\mathsf{ord}(g-g_1)=n$.

Since $|\supp(\phi(S_0))|=|S_0|=2^r-1$, let $\{b\}=\phi(G)\setminus\supp(\phi(S_0))$ and $$S'=S+b-\frac{n+1}{2}g_1=(S_1+b-\frac{n+1}{2}g_1)\cdot \ldots \cdot( S_{2n-2}+b-\frac{n+1}{2}g_1)\cdot (S_{0}+b-\frac{n+1}{2}g_1).$$
Therefore $S'$ has no zero-sum subsequence of length $2n$ and $0\notin \supp(\phi(S_0+b))=\supp(\phi(S_0+b-\frac{n+1}{2}g_1))$,
$$\sigma(S_1+b-\frac{n+1}{2}g_1)\cdot \ldots \cdot\sigma( S_{2n-2}+b-\frac{n+1}{2}g_1)=(g-g_1)^{n-1}\cdot 0^{n-1}.$$
Then without loss of generality, we can assume that $0\notin \supp(\phi(S_0))$ and  $$\sigma(S_1)=\cdots =\sigma(S_{n-1})=g_1=0\mbox{ and }\sigma(S_n)=\cdots =\sigma(S_{2n-2})=g.$$

\medskip\noindent
{\bf Case 1.} There exists $T\mid S_0$ such that $|T|=4$,  $\sigma(\phi(T))=0$ and  $\sigma(\psi(T))\neq g$.
\smallskip

Then  $\psi(\sigma(T))=tg$ where $t\in [2, n]$.  By calculation we get
$$\sigma(S_1\cdot\ldots\cdot S_{t-2}\cdot S_{n}\cdot \ldots\cdot S_{2n-1-t}\cdot T)=0, \quad \text{ and }$$
$$|S_1\cdot\ldots\cdot S_{t-2}\cdot S_{n}\cdot \ldots\cdot S_{2n-1-t}\cdot T|
=2(t-2)+2(n-t)+4
=2n,$$
a contradiction.

\medskip\noindent
{\bf Case 2.} For any subsequence $T\t S_0$ satisfying $|T|=4$ and $\phi(\sigma(T))=0$, we have $\psi(\sigma(T))=g$.
\smallskip

By $\mathsf{supp}(\phi(S_0))=\phi(G)\setminus \{0\}$ and Lemma \ref{Supp}, we have $\mathsf{supp}(\psi(S_0))=\{a\}$ for some $a\in \psi(G)$. Let $T$ be a subsequence of $S_0$ satisfying $|T|=4$ and $\phi(\sigma(T))=0$, then $\psi(\sigma(T))=4a=g$ which implies that
\begin{equation}\label{e3.1}
 a=\{\frac{n+1}{4}g\} \quad \text{ if } n\equiv 3\pmod 4 \qquad \text{ and } \qquad a=\{\frac{3n+1}{4}g\} \quad \text{ if } n\equiv 1\pmod 4\,.
\end{equation}

 Without loss of generality, we can  assume $h_0=0\notin \supp(\phi(S_0))$ and
$|W_1|\geq \cdots \geq |W_{2^r-1}|$.
Then $2\nmid |W_i|$ for all $i\in [1, 2^r-1]$.

 We can distinguish the following two cases.

\medskip\noindent
{\bf Subcase 2.1.} $|W_1|\geq 3$.
\smallskip

Without loss of generality, we can assume that  $a_1a_2a_0\t W_1$ and $S_i=a_1a_2$, $a_0\t S_0$, where $i\in [1,2n-2]$. Then $\sigma(\psi(S_i))=\psi(a_1)+\psi(a_2)\in \{0, g\}$ and hence $\psi(a_1)\neq \psi(a_0)$ or $\psi(a_2)\neq \psi(a_0)$ by Equation \eqref{e3.1}. Without loss of generality, we can assume that $\psi(a_1)\neq \psi(a_0)$. Let $S_i'=a_0a_2$ and $S_0'=S_0a_0^{-1}a_1$. Since $\sigma(\phi(S_1))\cdot\ldots \cdot \sigma(S_{i-1})\cdot\sigma(S_i')\cdot \sigma(S_{i+1})\cdot\ldots\cdot \sigma(S_{2n-2})$ has no subsequence of length $n$,  by Lemma \ref{cyclic}.3 we obtain that $\sigma(\phi(S_i'))=\sigma(\phi(S_i))$ which implies that $\psi(a_1)= \psi(a_0)$, a contradiction.

\medskip\noindent
{\bf Subcase 2.2.} $|W_1|\le 2$.
\smallskip

Since $2\geq |W_1|\geq \cdots \geq |W_{2^r-1}|$ and $2\nmid |W_i|$ for all $i\in [1, 2^r-1]$, we have that $|W_1|=\cdots =|W_{2^r-1}|=1$. Then
$$|W_0|=|S|-\sum_{i=1}^{2^r-1}|W_i|=4n+2^r-5-(2^r-1)=4n-4\ge 3n-1.$$
Since $W_0$ is a sequence over $\ker(\phi)\cong C_n$, there are two disjoint  zero-sum subsequences $V_1,V_2$ of length $n$ by $\mathrm \mathsf s(C_n)=2n-1$(see Theorem {\bf A}). Therefore $V_1V_2$ is a zero-sum subsequence of length $2n$, a contradiction.

\end{proof}

\medskip
\begin{proof}[\bf{Proof of Theorem \ref{Th1}.1}]
 By Lemma \ref{ETA} and Inequality \ref{LOWETA}, we only need to prove that $\mathrm \mathsf s(G)\le 4n+3$. If $n$ is odd, it follows immediately by Theorem \ref{Th2}. Thus we can always assume that $n$ is even.

  Let $(e_1,e_{2},e)$ be a basis of $G=C_2^{2}\oplus C_{2n}$ with $\ord(e_1)=\ord(e_{2})=2$ and $\ord(e)=2n$  and $S$ be any sequence over $G$ with $|S|=4n+3$.  Assume to the contrary that $S$ contains no zero-sum subsequence of length $2n$.

Let $\theta: G\rightarrow G$ be the homomorphism defined by $\theta(e_1)=e_1$, $\theta(e_2)=e_2$  and $\theta(e)=ne$. Then $\mathsf{ker}(\theta)=\left<2e\right>\cong C_n$ and $\theta(G)=\left<e_1,e_{2}, ne\right>\cong C_2^3$.

Let $\theta(G)=\{h_0,h_1,\ldots,h_{7}\}$. We can assume that
$$\theta(S)=h_0^{n_0}\cdot \ldots \cdot h_{7}^{n_{7}}\mbox{ and }S=W_0\cdot \ldots \cdot W_{7}$$
where $n_0, \ldots, n_{7}\in \mathbb{N}_0$, and $\theta(W_i)=h_{i}^{n_i}$ for all $i\in [0, 7]$.

Then $S$ allows a product decomposition $S=S_1\cdot \ldots \cdot S_{k}\cdot S_0$  satisfying that  $\theta(S_0)$ is squarefree, and  for each $i\in [1, k]$, $|S_i|=2$ and $\sigma(\theta(S_i))=0$. Therefore $S_i\t W_j$ for some $j\in [0,7]$.

Since $4n+3=|S|=2k+|S_0|\le 2k+8$, we obtain that $k\ge 2n-2$.
By our assumption, $\sigma(S_1)\cdot\ldots\cdot \sigma(S_k)\in \mathscr F(\ker(\theta))$ has no subsequence of length $n$. Therefore by Lemma  \ref{cyclic}.3, we have that $k=2n-2$, $|S_0|=7$ and $$\sigma(S_1)\cdot \ldots \cdot \sigma(S_{2n-2})=(2ke)^{n-1}(2k_1e)^{n-1},$$
where $k, k_1\in [0,n-1]$ and $\gcd(k-k_1,n)=1$. Since $n$ is even, without loss of generality, we can assume that $k_1$ is even.

Since $|\supp(\theta(S_0))|=|S_0|=7$, we let  $\{b\}=\theta(G)\setminus\supp(\theta(S_0))\in G$ and let $$S'=S+b-k_1e=(S_1+b-k_1e)\cdot \ldots \cdot( S_{2n-2}+b-k_1e)\cdot (S_{0}+b-k_1e).$$
Therefore $S'$ has no zero-sum subsequence of length $2n$ and $0\notin \supp(\theta(S_0+b))=\supp(\theta(S_0+b-k_1e))$,
$$\sigma(S_1+b-k_1e)\cdot \ldots \cdot\sigma( S_{2n-2}+b-k_1e)=(2(k-k_1)e)^{n-1}\cdot 0^{n-1}.$$
Then without loss of generality, we can assume that $0\notin \supp(\theta(S_0))$ and  $$\sigma(S_1)=\cdots =\sigma(S_{n-1})=2k_1e=0\ \mbox{ and }\ \sigma(S_n)=\cdots =\sigma(S_{2n-2})=2ke=2e.$$

\medskip\noindent
{\bf Case 1.} There exists $T\mid S_0$ such that $|T|=4$,  $\theta(\sigma(T))=0$ and  $\sigma(T)\neq 2e$.
\smallskip

Then  $\sigma(T)=2te$ where $t\in [2, n]$.  By calculation we get
$$\sigma(S_1\cdot\ldots\cdot S_{t-2}\cdot S_{n}\cdot \ldots\cdot S_{2n-1-t}\cdot T)=0 \quad \text{ and }$$
$$|S_1\cdot\ldots\cdot S_{t-2}\cdot S_{n}\cdot \ldots\cdot S_{2n-1-t}\cdot T|
=2(t-2)+2(n-t)+4
=2n,$$
a contradiction.

\medskip\noindent
{\bf Case 2.} For any subsequence $T\t S_0$ satisfying $|T|=4$ and $\theta(\sigma(T))=0$, we have $\sigma(T)=2e$.
\smallskip

 Since $\mathsf{supp}(\theta(S_0))=\theta(G)\setminus \{0\}$, we can assume $S_0=g_1g_2g_3g_4g_5g_6g_7$,
where
\begin{align*}g_1=e_1+2a_1e,&\quad g_2=e_2+e+2a_2e,\quad g_3=e_2+ 2a_3e, \quad g_4=e_1+e+ 2a_4e ,\\
g_5=e+ 2a_5e ,&\quad g_6=e_1+e_2+2a_6e ,\quad  g_7=e_1+e_2+e+ 2a_7e,\text{ and }a_1, \ldots , a_7\in [0,n-1].
\end{align*}

Since $
\theta(g_1+g_3+g_5+g_7)
=\theta(g_2+g_4+g_5+g_7)
=\theta(g_1+g_2+g_3+g_4)
$,
 we obtain the following equations
\begin{align*}
g_1+g_3+g_5+g_7
=g_2+g_4+g_5+g_7
=g_1+g_2+g_3+g_4
=2e,
\end{align*}

which implies that
$$2e+2(a_1+a_3+a_5+a_7)e=4e+2(a_2+a_4+a_5+a_7)e=2e+2(a_1+a_2+a_3+a_4)e=2e.$$

Therefore \begin{align*}
& a_1+a_3+a_5+a_7\equiv 0 \pmod n,\\
& a_2+a_4+a_5+a_7 \equiv -1 \pmod n, \\
& a_1+a_2+a_3+a_4\equiv 0 \pmod n.
\end{align*}
Thus $2(a_1+a_3)+a_2+a_4+a_5+a_7\equiv 0\pmod n$, which implies that $2(a_1+a_3)\equiv 1\pmod n$, a contradiction to  $n$ is even.
\end{proof}

\section{Preparatory results about $C_2^3\oplus C_{2n}$}

In the whole section, we consider the group $G=C_2^3\oplus C_{2n}$, where $n\ge 3$ is an odd integer. Thus $G\cong C_2^4\oplus C_n$.
Let $G=H\oplus K$, where $H,K$ are subgroups of $G$ with $H\cong C_2^4$  and  $K\cong C_n$. Denote $\phi$ to be the projection from  $G$ to  $H$ and $\psi$ to be the projection from $ G$ to  $K$.

\begin{lemma}\label{IMP} Let $G,H,K$ and $\phi,\psi$ be as above.
If  $S$ is a sequence of length $10$ over $G$ such that  $\phi(S)$  is a squarefree sequence over $H\setminus\{0\}$,  then  $S$  has two distinct  subsequences $T_1$ and $T_2$ of length $\{ |T_1|, |T_2|\}
 \subseteq [3,4]$ satisfying $\sigma(\phi(T_1))=\sigma(\phi(T_2))=0$ but $\sigma(\psi(T_1))\neq \sigma(\psi(T_2))$.
\end{lemma}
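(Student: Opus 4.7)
The plan is a proof by contradiction: suppose every pair of distinct subsequences $T_1,T_2$ of $S$ with $|T_1|,|T_2|\in[3,4]$ and $\sigma(\phi(T_1))=\sigma(\phi(T_2))=0$ satisfies $\sigma(\psi(T_1))=\sigma(\psi(T_2))$, and denote this common value by $c\in K$. Since $\phi(S)$ is squarefree with $\supp(\phi(S))\subseteq H\setminus\{0\}$ and $|S|=10$, write $S=\prod_{h\in A}s_h$ where $A:=\supp(\phi(S))$ has cardinality $10$ and $\phi(s_h)=h$; set $f\colon A\to K$, $f(h):=\psi(s_h)$. The goal is to overdetermine $f$ to the point of forcing $f\equiv 0$ on $A$.

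The central step is to invoke Lemma \ref{SUM} on $W:=\phi(S)$ at an arbitrary $w\in A$. Because $|W|=10$, the lemma furnishes two disjoint length-$2$ subsequences $R_1=xy$ and $R_2=x'y'$ of $W\cdot w^{-1}$ with $\sigma(R_1)=\sigma(R_2)=w$, so that $w,x,y,x',y'$ are five pairwise distinct elements of $A$. Lifting back to $S$, the subsequences $s_w s_x s_y$ and $s_w s_{x'} s_{y'}$ are length-$3$ zero-$\phi$-sum subsequences, and $s_x s_y s_{x'} s_{y'}$ is a length-$4$ zero-$\phi$-sum subsequence (since $x+y+x'+y'=2w=0$ in the elementary $2$-group $H$). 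The contradiction hypothesis then yields the three relations
\[
f(w)+f(x)+f(y)\;=\;f(w)+f(x')+f(y')\;=\;f(x)+f(y)+f(x')+f(y')\;=\;c,
\]
and adding the first two equations and subtracting the third gives $2f(w)=c$. Since $n$ is odd, $2$ is invertible in $K\cong C_n$, so $f(w)=c/2$. Because $w\in A$ was arbitrary, $f$ is constant equal to $c/2$ on $A$; substituting into any length-$3$ relation gives $3(c/2)=c$, whence $c=0$ and $f\equiv 0$.

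The main obstacle is then closing the contradiction from $f\equiv 0$, since the hypotheses of Lemma \ref{IMP} as written admit the degenerate case $\psi(s)=0$ for every $s\in S$ (in which case all zero-$\phi$-sum subsequences trivially share the $\psi$-sum $0$). I expect Lemma \ref{IMP} to be invoked in Section 5 only after a preliminary normalisation of $S$---analogous to the shifts $S\mapsto S+b-\tfrac{n+1}{2}g_1$ and $S\mapsto S+b-k_1e$ used in the proofs of Theorems \ref{Th2} and \ref{Th1}.1---that arranges $\psi(S)\neq 0^{10}$; under this normalisation, the derived conclusion $f\equiv 0$ is impossible and the argument closes.
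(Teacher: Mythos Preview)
Your argument is essentially the paper's: assume a common $\psi$-sum $c$, apply Lemma~\ref{SUM} at an element $w$ to obtain two disjoint pairs $R_1,R_2$ with $\phi$-sum $\phi(w)$, and combine the three relations coming from $wR_1$, $wR_2$, $R_1R_2$ to force $2\psi(w)=c$. The paper is slightly more economical: instead of running this for every $w$ and deducing $f\equiv c/2$, it first fixes one length-$3$ zero-$\phi$-sum subsequence $T$, observes that (when $c\neq 0$) some $u\mid T$ satisfies $\psi(u)\neq \tfrac{n+1}{2}c$, and then applies the three-equation trick at that single $u$ to conclude $\psi(u)=\tfrac{n+1}{2}c$, an immediate contradiction.

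The degenerate case you flag is real and is glossed over in the paper as well: it simply writes the common value as ``$e\in K\setminus\{0\}$'' without justification, and the lemma as stated is indeed false when $\psi(S)=0^{10}$. Your instinct that this is harmless in context is correct, but the mechanism is not a preliminary shift. In every application (Cases~2 and~4 of Proposition~\ref{RE1}, and Case~2 of Proposition~\ref{RE2}), the sequence to which the lemma is applied is a subsequence $S_0$ of a sequence assumed to have no short zero-sum subsequence; hence any $T\mid S_0$ with $|T|\in[3,4]$ and $\sigma(\phi(T))=0$ satisfies $\sigma(T)=\sigma(\psi(T))\neq 0$, so the common value $c$ is automatically nonzero and your argument (and the paper's) closes.
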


\begin{proof}
By Lemma \ref{SUM}, $S$ has at least two distinct subsequences $W_1$, $W_2$ of length $3$ such that $\phi(\sigma(W_1))=\phi(\sigma(W_2))=0$.

Assume to the contrary that  for any  zero-sum subsequence $\phi(T)$ of $\phi(S)$ with length $3$ or $4$, we have  $\sigma(\psi(T))=e$, where $e\in K\setminus \{0\}$.

By Lemma \ref{SUM}, there exists a subsequence $T$ of $S$ such that $|T|=3$ and  $\sigma(\phi(T))=0$.
Then $\sigma(\psi(T))=e$ and hence there exists an element $u\t T$ such that $\psi(u)\neq \frac{n+1}{2}e$.

By Lemma \ref{SUM} again, there exist disjoint subsequences $R_1, \, R_2$  of  $Su^{-1}$ such that $\sigma(\phi(R_1))=\sigma(\phi(R_2))=\phi(u)$ and $|R_1|=|R_2|=2$.
Thus $\phi(R_1R_2)$, $\phi(R_1u)$, and $\phi(R_2u)$ are zero-sum sequences which implies that $\sigma(\psi(R_1R_2))=\sigma(\psi(R_1u))=\sigma(\psi(R_2u))=e$.
It follows that $\psi(u)=\frac{n+1}{2}e$, a contradiction to the choice of $u$.
\end{proof}

\begin{lemma}\label{SHO} Let $G,H,K$ and $\phi,\psi$ be as above.
 If $n=3$ and  $S$ is a sequence of length $12$ over $G$ such that  $\phi(S)$ is a  squarefree sequence,  then  $S$ contains a short zero-sum subsequence.
\end{lemma}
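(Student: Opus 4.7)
My strategy is proof by contradiction, combining the pair decomposition from Lemma \ref{SUM} (now applicable with $|W|=12$) and the dichotomy provided by Lemma \ref{IMP}, together with a close analysis of $\psi$-sums in $C_3=\{0,e,2e\}$. Suppose $S$ has no zero-sum subsequence of length in $[1,2n]=[1,6]$. Then $0\notin S$, and length-$2$ zero-sums are impossible since $\phi(g)+\phi(g')=0$ in $H=C_2^4$ forces $\phi(g)=\phi(g')$, contradicting $\phi$-squarefreeness. If $0\in\supp(\phi(S))$, the unique $g_0\mid S$ with $\phi(g_0)=0$ lies in $K\setminus\{0\}=\{e,2e\}$, and the task reduces to locating a subsequence of $Sg_0^{-1}$ (of length $11$) of length $\le 5$ with $\phi$-sum $0$ and $\psi$-sum $-\psi(g_0)$; this variant runs on the same machinery, so I focus on the principal case $0\notin\supp(\phi(S))$.

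In the principal case, $\phi(S)$ is squarefree over $H\setminus\{0\}$ with $|\phi(S)|=12$, missing exactly three elements. For each $w\mid S$, Lemma \ref{SUM} yields four disjoint pairs $R_1,R_2,R_3,R_4\mid Sw^{-1}$ with $\phi(\sigma(R_i))=\phi(w)$. Setting $b_i:=\psi(\sigma(R_i))\in C_3$, the non-existence of short zero-sums $wR_i$ (length $3$) and $R_iR_j$ (length $4$, $i\ne j$) forces $b_i\ne -\psi(w)$ and $b_i+b_j\ne 0$. A short case analysis in $C_3$ yields rigid structure: if $\psi(w)=0$ then all four $b_i$ coincide with a common $c_w\in\{e,2e\}$; if $\psi(w)\ne 0$ then at least three of the $b_i$ equal $\psi(w)$ and any remaining one equals $0$. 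Consequently, for every $w$ the zero-sum triples $\{w,a,b\}\subseteq\phi(S)$ (i.e., $\phi(a)+\phi(b)=\phi(w)$) have $\psi$-sums lying in a tightly prescribed subset of $\{e,2e\}$ governed by $\psi(w)$.

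Now apply Lemma \ref{IMP} to any length-$10$ subsequence of $S$: this yields two distinct $T_1,T_2$ with $|T_i|\in\{3,4\}$, $\phi(\sigma(T_i))=0$, and $\psi(\sigma(T_1))\ne\psi(\sigma(T_2))$. If some $\psi(\sigma(T_i))=0$, then $T_i$ is itself a short zero-sum of length $\le 4$, contradiction. Otherwise $\{\psi(\sigma(T_1)),\psi(\sigma(T_2))\}=\{e,2e\}$, and two distinct zero-sum triples in a $\phi$-squarefree sequence share at most one element (sharing two would give a length-$2$ zero-sum in $H$, impossible). Two favourable configurations then produce a short zero-sum of $S$: (a) if $T_1,T_2$ are disjoint and $|T_1|=|T_2|=3$, then $T_1T_2$ has length $6$, $\phi$-sum $0$, and $\psi$-sum $e+2e=0$; (b) if $T_1,T_2$ share a single element $x$ with $\psi(x)=0$, then $T_1\Delta T_2$ has length $\le 6$, $\phi$-sum $0$, and $\psi$-sum $e+2e-2\psi(x)=0$. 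I intend to show that, under the rigidity of paragraph two, varying the length-$10$ subsequence to which Lemma \ref{IMP} is applied must eventually force one of (a) or (b).

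The hard part is this last extraction step: I need to argue that the constraints on the $b_i$ across all choices of $w\mid S$ are incompatible with Lemma \ref{IMP} perpetually producing $(T_1,T_2)$ in ``bad'' configurations (overlapping in a label-nonzero element, or mixing length-$3$ with length-$4$ across incompatible overlaps). This is plausible by a counting/pigeonhole argument: $\phi(S)$ supports at least $14$ zero-sum $3$-subsets (of the $35=\binom{4}{2}_2$ two-dimensional subspaces of $\mathbb{F}_2^4$ minus $0$, at most $3\cdot 7=21$ are broken by the missing elements) and at least $21$ zero-sum $4$-subsets (the $105$ non-origin cosets of $2$-dimensional subspaces, minus at most $3\cdot 28=84$ broken ones); their $\psi$-sums lie in the three-element group $C_3$, and the structural constraints from Lemma \ref{SUM} combined with the intersection dichotomy $|T_1\cap T_2|\in\{0,1\}$ leave no room for the ``bad'' configuration to persist.
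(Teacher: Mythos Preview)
Your opening moves are fine: the case $0\in\supp(\phi(S))$ really does follow from a single application of Lemma~\ref{IMP} to $Sg_0^{-1}$ (any $T$ with $\sigma(\psi(T))\notin\{0,\psi(g_0)\}$ must have $\sigma(\psi(T))=2\psi(g_0)$, so $Tg_0$ is a short zero-sum of length $\le5$), and the rigidity you extract from Lemma~\ref{SUM} in the second paragraph is correct. The problem is that you stop exactly where the actual work begins. You label the extraction step ``the hard part'' and then offer only a plausibility count; that is a sketch, not a proof. Lemma~\ref{IMP} is an existence statement: it hands you \emph{some} pair $(T_1,T_2)$, not one of your choosing, so ``varying the length-$10$ subsequence'' gives no control over lengths or overlaps. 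Your favourable configurations (a) and (b) do not exhaust the possibilities: nothing rules out $|T_1|=3$, $|T_2|=4$ disjoint (giving length $7>6$), or $|T_1|=|T_2|=3$ sharing a single element $x$ with $\psi(x)\ne0$ (then $T_1\Delta T_2$ has $\psi$-sum $-2\psi(x)=\psi(x)\ne0$), or a length-$3$/length-$4$ pair sharing two elements. The counts ``at least $14$ triples, at least $21$ quadruples'' never connect back to the pair-sum rigidity in a way that forces a good configuration, so as written the principal case is incomplete.

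The paper's proof takes a different and more concrete route in the principal case, bypassing Lemma~\ref{IMP} entirely: it does a direct case analysis on the multiplicity $\mathsf v_0(\psi(S))$, splitting into $\mathsf v_0\ge5$, $\mathsf v_0=4$, $\mathsf v_0=3$, and $\mathsf v_0\le2$, with the last two further subdivided by the multiplicities of $e$ and $2e$. Each subcase is finished by exhibiting an explicit short zero-sum using only $\mathsf D(C_2^4)=5$ and Lemma~\ref{SUM}. This is more pedestrian than your structural approach but it terminates. If you want to salvage your line, you would need to replace the heuristic last paragraph with an actual argument that the pair-sum constraints force two \emph{disjoint} $\phi$-zero-sum triples with $\psi$-sums $e$ and $2e$ (or an equivalent benign overlap); I do not see how to do this without a case split of comparable granularity to the paper's.
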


\begin{proof}

 Assume to the contrary that $S$ contains no short zero-sum subsequence. Thus $0\not\in \mathsf{supp}(S)$.

If $0\in \mathsf{supp}(\phi(S))$, then there exists $g\mid S$ such that $\phi(g)=0$ and hence $\psi(g)\neq 0$. By $|Sg^{-1}|=11$ and  Lemma \ref{IMP}, $Sg^{-1}$  have a subsequence $T$ of length $|T|\in\{3,4\}$ such that $\sigma(\phi(T))=0$ and $\sigma(\psi(T))\neq \psi(g)$. Since $\sigma(\psi(T))\neq 0$, we obtain $\sigma(\psi(T))=2\psi(g)$ which implies that $Tg$  is a short zero-sum subsequence of $S$, a contradiction.

\medskip
Therefore  $0\notin \mathsf{supp}(\phi(S))$. Let $S=g_1\cdot\ldots\cdot g_{12}$.  We distinguish the following four cases to finish the proof.

\medskip
\noindent\textbf{Case 1.} $\mathsf v_0(\psi(S))\geq 5$.
\smallskip

Without loss of generality, we can assume that $\psi(g_1\cdot\ldots\cdot g_5)=0^5$. Since $\phi(g_1\cdot\ldots\cdot g_5)\in \mathscr{F}(\phi(G))$ and $\mathsf D(\phi(G))=\mathsf D(C_2^4)=5$, there exists a subsequence $X\mid g_1\cdot\ldots\cdot g_5$ such that $\sigma(\phi(X))=0$ which implies that $X$ is a short zero-sum subsequence of $S$, a contradiction.

\medskip
\noindent\textbf{Case 2.} $\mathsf v_0(\psi(S))=4$.
\smallskip

Without loss of generality, we can assume that $\psi(g_1g_2g_3g_4)=0^4$ and $\psi(g_5g_6g_7g_8)=e^4$ for some $e\in \psi(G)\cong C_3$. Thus $g_1, g_2, g_3, g_4, g_5+g_6+g_7, g_5+g_6+g_8\in \phi(G)\cong C_2^4$ and $g_5+g_6+g_7\neq g_5+g_6+g_8$.  Choose $R=g_5+g_6+g_7$ or $g_5+g_6+g_8$ such that $\sigma(R)\neq g_1+g_2+g_3+g_4$. Then $g_1g_2g_3g_4\sigma(R)$ has a zero-sum subsequence of length $\le 4$ which implies that  $S$ contains a short zero-sum subsequence, a contradiction.

\medskip
\noindent\textbf{Case 3.} $\mathsf v_0(\psi(S))=3$.
\smallskip

Without loss of generality, we can assume that  $\psi(S)=0^3\cdot e^u\cdot (2e)^v$, $u+v=9$, $u\geq v$, and $\psi(g_1g_2g_3)=0^3$ for some $e\in \psi(G)\cong C_3$.

Suppose that $v=0$. We assume that $\psi(g_4\cdot \ldots\cdot g_{12})=e^9$. Then by Lemma \ref{SUM}, $\phi(g_4\cdot \ldots\cdot g_{12})$ contains a zero-sum subsequence of length $3$ which implies that $S$ contains a short zero-sum subsequence of length $3$, a contradiction.

Suppose that $v=1$. We assume that $\psi(g_4\cdot \ldots\cdot g_{11})=e^8$ and $\psi(g_{12})=2e$. Then $g_1,g_2,g_3, g_{11}+g_{12}, g_4+g_5+g_j\in \phi(G)\cong C_2^4$ for any $j\in [6,10]$. If $\sigma(g_1g_2g_3g_{11}g_{12})=0$, then $g_1g_2g_3g_{11}g_{12}$ is a short zero-sum subsequence of $S$, a contradiction. Thus $\sigma(g_1g_2g_3g_{11}g_{12})\neq 0$.  Since $|[6,10]|=5$,  there exists an $i\in [6, 10]$ such that $$g_4+g_5+g_i\notin \{\sigma(g_1g_2g_3(g_{11}g_{12})), \sigma(g_1g_2(g_{11}g_{12})), \sigma(g_1g_3(g_{11}g_{12})), \sigma(g_2g_3(g_{11}g_{12}))\}.$$
Therefore $g_1g_2g_3\sigma(g_{11}g_{12})\sigma(g_4g_5g_i)$ contains a zero-sum subsequence of length $\le 3$ which implies that  $S$ contains a short zero-sum subsequence, a contradiction.

Suppose that  $v\geq 2$.  we assume that $\psi(g_4\cdot \ldots\cdot g_{8})=e^{5}$ and $\psi(g_{11} g_{12})=(2e)^2$. Then $g_4+g_{11}, g_5+g_{12}, g_6+g_{12}\in \phi(G)\cong C_2^4$ and $g_5+g_{12}\neq g_6+g_{12}$. Choose $R=g_5+g_{12}$ or $g_6+g_{12}$ such that  $\sigma(R)\neq g_{4}+g_{11}+g_1+g_2+g_3$. Therefore $g_1g_2g_3\sigma(g_{4}g_{11})\sigma(R)$ contains a zero-sum subsequence of length $\le4$ which implies that $S$ contains a short zero-sum subsequence, a contradiction.

\medskip
\noindent\textbf{Case 4.} $\mathsf v_0(\psi(S))\le 2$.
\smallskip

Without loss of generality, we can assume that  $\psi(S)=0^t\cdot e^u\cdot (2e)^v$, $t\in \mathbb{N}_0$, $u+v\ge 10$,  and  $u\geq v$ for some $e\in \psi(G)\cong C_3$.

Suppose that $u\ge 9 $.  We assume that $\psi(g_1\cdot\ldots\cdot g_9)=e^9$. Then by Lemma \ref{SUM}, $\phi(g_1\cdot \ldots\cdot g_{9})$ contains a zero-sum subsequence of length $3$ which implies that $S$ contains a short zero-sum subsequence of length $3$, a contradiction.

Suppose that $u\le 8$. Then $v\ge 2$. We assume that $\psi(g_1\cdot \ldots\cdot g_{u})=e^{u}$ and $\psi(g_{u+1}\cdot \ldots\cdot g_{u+v})=(2e)^v$. If there exist $i_1, i_2\in [1, u]$ and $j_1, j_2\in [u+1, u+v]$ such that $i_1\neq i_2$, $j_1\neq j_2$, and $g_{i_1}+g_{j_1}=g_{i_2}+g_{j_2}$, then $g_{i_1}g_{j_1}g_{i_2}g_{j_2}$ is a short zero-sum subsequence of $S$, a contradiction. Therefore
$$|\{g_i+g_j\in \phi(G)\mid i\in [1,u] \text{\ and\ } j\in[u+1,u+v] \}|\ge uv\ge v(10-v)\ge 16=|\phi(G)|.$$
It follows that there exist an $i\in [1,u]$ and a $j\in [u+1,u+v]$ such that $\phi(g_i)=\phi(g_j)$,
a contradiction to $\phi(S)$ is squarefree.
\end{proof}

\begin{lemma}\label{L1} Let $G,H,K$ and $\phi,\psi$ be as above.
Let $ K=\left<e\right>$  and  $S=h_1\cdot\ldots\cdot h_8$ be a sequence over $G\setminus\{0\}$ with $\phi(h_1)+\phi(h_2)=\phi(h_3)+\phi(h_4)=\phi(h_5)+\phi(h_6)=\phi(h_7)+\phi(h_8)$.
If  $\phi(S)$  is a squarefree sequence with $0\not\in \supp(\phi(S))$ and satisfies the following property $(*)$:
  \[
    \left\{\begin{aligned} & \text{For any subsequence $V$ of $S$ with $\sigma(\phi(V))=0$, \text{ we have that }}\\
     &\sigma(\psi(V)) =\left \{ \begin{aligned}&  e ,&\quad \quad \quad& \mbox{ if } |V|=3\mbox{ or }4, \\
    & e\mbox{ or }2e,&\quad &\mbox{ if } |V|=5.
    \end{aligned} \right.
   \end{aligned}\qquad (*)\right.
    \]
 then $\supp(\psi(S))=\{\frac{n+1}{4}e\}$ if $n\equiv 3 \pmod{4}$ and  $\supp(\psi(S))=\{\frac{3n+1}{4}e\}$ if $n\equiv 1 \pmod{4}$.
\end{lemma}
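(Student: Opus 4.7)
The plan is to exploit the strong pairing constraint $\phi(h_{2i-1})+\phi(h_{2i})=:b$ together with property $(*)$, and deduce via a short combinatorial case split on the quotient $H/\langle b\rangle\cong C_2^3$ that all eight values $\psi(h_l)$ coincide with the unique element $y\in K$ satisfying $4y=e$. First, I note that $b\ne 0$ (otherwise $\phi(h_{2i-1})=\phi(h_{2i})$, violating squarefreeness), and that for any $i\ne j$ in $[1,4]$ the subsequence $V_{ij}:=h_{2i-1}h_{2i}h_{2j-1}h_{2j}$ has $\sigma(\phi(V_{ij}))=2b=0$. Property $(*)$ forces $\sigma(\psi(V_{ij}))=e$, so if $s_i:=\psi(h_{2i-1})+\psi(h_{2i})$ then $s_i+s_j=e$ for all $i\ne j$; this collapses to $s_1=s_2=s_3=s_4=:s$ with $2s=e$, and since $n$ is odd, $s=\tfrac{n+1}{2}e$.

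Next, write $a_i:=\phi(h_{2i-1})$, so $\phi(h_{2i})=b+a_i$, and pass to the images $\bar a_i\in H/\langle b\rangle\cong C_2^3$. Squarefreeness together with $0\notin\supp(\phi(S))$ makes $\bar a_1,\bar a_2,\bar a_3,\bar a_4$ four distinct nonzero elements. A short pigeon-hole argument (if no triple $\bar a_i+\bar a_j+\bar a_k$ vanished, the six pair-sums $\bar a_i+\bar a_j$ would have to lie in the three-element set $C_2^3\setminus\{0,\bar a_1,\ldots,\bar a_4\}$, forcing two disjoint pair-sums to coincide) splits the analysis into Case~A, where some triple of the $\bar a_i$'s sums to $0$, and Case~B, where no triple sums to $0$ but necessarily $\bar a_1+\bar a_2+\bar a_3+\bar a_4=0$.

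I then rule out Case~A. WLOG $\bar a_1+\bar a_2+\bar a_3=0$, so $a_1+a_2+a_3\in\{0,b\}$. In either subcase, choosing the matching shift-parity produces exactly four length-$3$ subsequences $T$ of $S$, each picking one element from each of pairs $1,2,3$, satisfying $\sigma(\phi(T))=0$; a direct count shows every one of $h_1,\ldots,h_6$ occurs in exactly two of these four zero-sums. Summing the four identities $\sigma(\psi(T))=e$ supplied by $(*)$ yields $2\sum_{l=1}^{6}\psi(h_l)=4e$, that is, $6s=4e$. Together with $2s=e$ this forces $e=0$, a contradiction to $n\ge 3$.

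In Case~B we have $a_1+a_2+a_3+a_4\in\{0,b\}$, and choosing the shift-parity to match produces eight length-$4$ subsequences $T$ obtained by taking one element from each of the four pairs, with $\sigma(\phi(T))=0$. Setting $y_i:=\psi(h_{2i-1})$ and $z_i:=\psi(h_{2i})=s-y_i$, the eight identities $\sigma(\psi(T))=e$ from $(*)$ become linear equations in the $y_i$; pairwise subtraction, together with the invertibility of $2$ in $C_n$, collapses them to $y_1=y_2=y_3=y_4=:y$. Substitution into any one equation then gives $4y=e$, so $2y=s$ and hence $z_i=s-y=y$ as well, which makes every $\psi(h_l)$ equal to the unique $y\in K$ with $4y=e$, namely $\tfrac{n+1}{4}e$ if $n\equiv 3\pmod 4$ and $\tfrac{3n+1}{4}e$ if $n\equiv 1\pmod 4$. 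The delicate part of the argument is the structural step in the second paragraph and the correct identification in each case of the $\phi$-zero-sums that the pairing is forced to produce; once that is right, everything reduces to routine bookkeeping in $C_n$. Notably, the length-$5$ clause of $(*)$ is not used: one can check in both cases that no length-$5$ $\phi$-zero-sum exists inside $S$.
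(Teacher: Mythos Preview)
Your proof is correct and takes a genuinely different route from the paper's. The paper fixes the specific $5$-element subsequence $W=h_1h_2h_3h_5h_7$, invokes $\mathsf D(C_2^4)=5$ to produce a $\phi$-zero-sum $V\mid W$, and then splits on $|V|\in\{3,4,5\}$: the cases $|V|=3$ and $|V|=5$ are eliminated by ad hoc substitutions (the latter using the length-$5$ clause of~$(*)$), while each of the two $|V|=4$ subcases is pushed through by hand to the conclusion. Your argument instead passes to the quotient $H/\langle b\rangle\cong C_2^3$, turns squarefreeness and $0\notin\supp(\phi(S))$ into the statement that the four images $\bar a_1,\dots,\bar a_4$ are distinct and nonzero, and derives a clean dichotomy (some triple vanishes versus the full sum vanishes). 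Case~A is then killed by an averaging identity over the four canonical length-$3$ zero-sums, and Case~B is resolved by the symmetric system of eight length-$4$ constraints.

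What your approach buys: the case analysis is organized by the intrinsic structure of $\{\bar a_i\}$ rather than by an arbitrary choice of $W$, the computations are uniform (one averaging trick per case instead of several handmade substitutions), and---as you note---the length-$5$ clause of $(*)$ is never invoked, which is a small strengthening the paper's argument does not yield. What the paper's approach buys: it avoids introducing the quotient and stays entirely inside $H$, which keeps the argument closer to the style of the neighbouring Lemmas~\ref{L2}--\ref{L4}. Both proofs begin identically (the pair-sum computation $s_i=\tfrac{n+1}{2}e$) and end identically (solving $4y=e$); the difference is purely in how the middle structural step is organized.
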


\begin{proof}
Since $\sigma(\phi(h_1h_2h_3h_4))=\sigma(\phi(h_3h_4h_5h_6))=\sigma(\phi(h_5h_6h_1h_2))=0$, we obtain that $\sigma(\psi(h_1h_2h_3h_4))=\sigma(\psi(h_3h_4h_5h_6))=\sigma(\psi(h_5h_6h_1h_2))=e$ which implies that $\psi(h_1)+\psi(h_2)=\psi(h_3)+\psi(h_4)=\psi(h_5)+\psi(h_6)=\frac{n+1}{2}e$. With the same reason, we can prove that $\psi(h_7)+\psi(h_8)=\frac{n+1}{2}e$.

Let $\psi(h_i)=k_ie$ where $1\le i\le 8$ and $0\le k_i\le n-1$. Without loss of generality, we can assume that $k_1\le k_2,\, k_3\le k_4,\, k_5\le k_6,\,k_7\le k_8$. We consider the sequence $W=h_1h_2h_3h_5h_7$ (see Figure \ref{f1}).

\begin{figure}[ht]
\begin{center}
\setlength{\unitlength}{0.7 mm}%
\begin{picture}(76.76,37.24)(0,0)
\put(7.00,27.04){\circle*{1.80}}
\put(6.96,6.69){\circle*{1.80}}
\put(27.31,27.04){\circle*{1.80}}
\put(26.96,6.69){\circle*{1.80}}
\put(46.97,27.04){\circle*{1.80}}
\put(66.97,27.04){\circle*{1.80}}
\put(67.52,6.69){\circle*{1.80}}
\put(3.65,28.85){\fontsize{8.53}{10.24}\selectfont \makebox(7.5, 3.0)[l]{$h_1$\strut}}
\put(3.71,9.67){\fontsize{8.53}{10.24}\selectfont \makebox(7.5, 3.0)[l]{$h_2$\strut}}
\put(23.90,28.85){\fontsize{8.53}{10.24}\selectfont \makebox(7.5, 3.0)[l]{$h_3$\strut}}
\put(23.55,9.67){\fontsize{8.53}{10.24}\selectfont \makebox(7.5, 3.0)[l]{$h_4$\strut}}
\put(43.44,28.85){\fontsize{8.53}{10.24}\selectfont \makebox(7.5, 3.0)[l]{$h_5$\strut}}
\put(44.01,9.67){\fontsize{8.53}{10.24}\selectfont \makebox(7.5, 3.0)[l]{$h_6$\strut}}
\put(63.52,28.85){\fontsize{8.53}{10.24}\selectfont \makebox(7.5, 3.0)[l]{$h_7$\strut}}
\put(64.09,9.67){\fontsize{8.53}{10.24}\selectfont \makebox(7.5, 3.0)[l]{$h_8$\strut}}
\put(47.05,6.69){\circle*{1.80}}
\thinlines\lbezier(2.00,35.24)(74.76,35.24)\lbezier(74.76,35.24)(74.76,22.48)\lbezier(74.76,22.48)(12.19,22.48)\lbezier(12.19,22.48)(12.19,2.00)\lbezier(12.19,2.00)(2.00,2.00)\lbezier(2.00,2.00)(2.00,35.24)
\end{picture}%

\end{center}
\caption{}\label{f1}
\end{figure}

Since $\phi(W)\in \mathscr{F}(\phi(G))$ and $\mathsf D(\phi(G))=\mathsf D(C_2^4)=5$, there exists a subsequence $V\mid W$ such that $\sigma(\phi(V))=0$ and $|V|\in \{3, 4, 5\}$. We distinguish three cases depending on $|V|$.

\smallskip
\noindent\textbf{Case 1.} $|V|=3$.
\smallskip

Since $0\not\in \supp(\phi(S))$, we obtain that $h_1h_2\nmid V$.
By symmetry, we only need to consider $V=h_1h_3h_5$ or $V=h_2h_3h_5$.

Suppose that $V=h_1h_3h_5$. Then $\sigma(\phi(h_1h_3h_5))=0$ and hence $\sigma(\phi(h_1h_4h_6))=0$. Thus \[\sigma(\psi(h_1h_3h_5))=\sigma(\psi(h_1h_4h_6))=e\]
 which implies that $\sigma(\psi(h_3h_5))=\sigma(\psi(h_4h_6))=\frac{n+1}{2}e$ and $\psi(h_1)=\frac{n+1}{2}e$ by $\sigma(\psi(h_3h_4h_5h_6))=e$. Therefore $\psi(h_2)=0$, a contradiction to $k_1\le k_2$.

Suppose that $V=h_2h_3h_5$. Then $\sigma(\phi(h_2h_3h_5))=0$ and hence $\sigma(\phi(h_1h_4h_5))=0$. Thus \[\sigma(\psi(h_2h_3h_5))=\sigma(\psi(h_1h_4h_5))=e\] which implies that $\sigma(\psi(h_2h_3))=\sigma(\psi(h_1h_4))=\frac{n+1}{2}e$ and $\psi(h_5)=\frac{n+1}{2}e$ by $\sigma(\psi(h_1h_2h_3h_4))=e$. Therefore $\psi(h_6)=0$, a contradiction to $k_5\le k_6$.

\smallskip
\noindent\textbf{Case 2.} $|V|=4$.
\smallskip

Since $\phi(S)$ is squarefree, we obtain that $h_1h_2\nmid V$.
Thus there are only two cases: $V=h_1h_3h_5h_7$ and $V=h_2h_3h_5h_7$.
\medskip

Suppose that $V=h_1h_3h_5h_7$.  Since $\sigma(\phi(h_1h_3h_5h_7))=\sigma(\phi(h_2h_4h_5h_7))=0$, we have that  \[\sigma(\psi(h_1h_3h_5h_7))=\sigma(\psi(h_2h_4h_5h_7))=e\] which implies that $\psi(h_1+h_3)=\psi(h_2+h_4)$. By $\psi(h_1+h_2)=\psi(h_3+h_4)=\frac{n+1}{2}e$ and $k_1\le k_2,\, k_3\le k_4$, we obtain that  \begin{align*}
\psi(h_1)&=\psi(h_3)=\psi(h_2)=\psi(h_4)=\frac{n+1}{4}e \,\quad \text{ if } n\equiv 3 \pmod{4}\,,\\ \psi(h_1)&=\psi(h_3)=\psi(h_2)=\psi(h_4)=\frac{3n+1}{4}e \quad \text{ if }  n\equiv 1 \pmod{4}\,.
\end{align*}
 With the same reason, we can prove that
 \begin{align*}
 \psi(h_5)&=\psi(h_6)=\psi(h_7)=\psi(h_8)=\frac{n+1}{4}e \,\quad \text{ if } n\equiv 3 \pmod{4}\,, \\ \psi(h_5)&=\psi(h_6)=\psi(h_7)=\psi(h_8)=\frac{3n+1}{4}e \quad \text{ if } n\equiv 1 \pmod{4}\,.
 \end{align*}
Therefore  $\supp(\psi(S))=\{\frac{n+1}{4}e\}$ if $n\equiv 3 \pmod{4}$ and  $\supp(\psi(S))=\{\frac{3n+1}{4}e\}$ if $n\equiv 1 \pmod{4}$.
\medskip

Suppose that $V=h_2h_3h_5h_7$. Since $\sigma(\phi(h_2h_3h_5h_7))=\sigma(\phi(h_2h_3h_6h_8))=0$, we have that \[\sigma(\psi(h_2h_3h_5h_7))=\sigma(\psi(h_2h_3h_6h_8))=e\] which implies that $\sigma(\psi(h_5+h_7))=\sigma(\psi(h_6+h_8))$. By $\psi(h_5+h_6)=\psi(h_7+h_8)=\frac{n+1}{2}e$ and $k_5\le k_6,\, k_7\le k_8$, we obtain that
\begin{align*}
\psi(h_5)&=\psi(h_6)=\psi(h_7)=\psi(h_8)=\frac{n+1}{4}e \quad \text{ if  } n\equiv 3 \pmod{4}\,,\\ \psi(h_5)&=\psi(h_6)=\psi(h_7)=\psi(h_8)=\frac{3n+1}{4}e\quad \text{ if } n\equiv 1 \pmod{4}\,.
\end{align*}
With the same reason, we can prove that
 \begin{align*}
 \psi(h_3)&=\psi(h_4)=\psi(h_5)=\psi(h_6)=\frac{n+1}{4}e \,\quad \text{ if } n\equiv 3 \pmod{4}\,, \\ \psi(h_3)&=\psi(h_4)=\psi(h_5)=\psi(h_6)=\frac{3n+1}{4}e \quad \text{ if } n\equiv 1 \pmod{4}\,.
 \end{align*}
Thus $\sigma(\psi(V))=\sigma(\psi(h_2h_3h_5h_7))=e$ implies that $\psi(h_2)=\psi(h_3)=\psi(h_5)=\psi(h_7)$ and hence  $\psi(h_1+h_2)=\frac{n+1}{2}e$ implies that $\psi(h_1)=\psi(h_2)$.
Therefore  $\supp(\psi(S))=\{\frac{n+1}{4}e\}$ if $n\equiv 3 \pmod{4}$ and  $\supp(\psi(S))=\{\frac{3n+1}{4}e\}$ if $n\equiv 1 \pmod{4}$.

\smallskip
\noindent\textbf{Case 3.} $|V|=5$. Then $V=h_1h_2h_3h_5h_7$.
\smallskip

It follows that $\sigma(\phi(h_4h_6h_8))=0$ and hence $\sigma (\phi(h_4h_5h_7))=\sigma(\phi(h_3h_6h_7))=0$. Thus by Property (*), we obtain $\sigma(\psi(h_4h_5h_7))=\sigma(\psi(h_3h_6h_7))=e$ which implies that $\sigma(\psi(h_4h_5))=\sigma(\psi(h_3h_6))=\frac{n+1}{2}e$ and $\psi(h_7)=\frac{n+1}{2}e$ by $\sigma(\psi(h_3h_4h_5h_6))=e$. Therefore $\psi(h_8)=0$, a contradiction to $k_7\le k_8$.
\end{proof}

\begin{lemma}\label{L2} Let $G,H,K$ and $\phi,\psi$ be as above.
Let $ K=\left<e\right>$ and $S=h_1\cdot\ldots\cdot h_8$ be a sequence over $G\setminus\{0\}$ with $\phi(h_1)=\phi(h_2)+\phi(h_3)=\phi(h_4)+\phi(h_5)=\phi(h_6)+\phi(h_7)$.
If  $\phi(S)$  is a squarefree sequence with $0\not\in \supp(\phi(S))$, then the following property $(*)$ does not hold.
  \[
      \left\{\begin{aligned} & \text{For any subsequence $V$ of $S$ with $\sigma(\phi(V))=0$, \text{ we have that }}\\
       &\sigma(\psi(V)) =\left \{ \begin{aligned}&  e ,&\quad \quad \quad& \mbox{ if } |V|=3\mbox{ or }4, \\
      & e\mbox{ or }2e,&\quad &\mbox{ if } |V|=5.
      \end{aligned} \right.
     \end{aligned}\qquad (*)\right.
      \]
\end{lemma}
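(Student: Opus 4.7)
The strategy is to assume property $(*)$ and derive a contradiction by extracting enough $\phi$-zero-sum subsequences of $S$ to over-constrain $y_i:=\psi(h_i)$. Set $f:=\tfrac{n+1}{2}e$, so $2f=e$ in $K\cong C_n$. The hypothesis yields six obvious $\phi$-zero-sum subsequences of $S$: three of length $3$, namely $h_1h_2h_3$, $h_1h_4h_5$, $h_1h_6h_7$, and three of length $4$, namely $h_2h_3h_4h_5$, $h_2h_3h_6h_7$, $h_4h_5h_6h_7$. Applying $(*)$ to all six and eliminating gives
\[
y_1=f \qquad\text{and}\qquad y_2+y_3=y_4+y_5=y_6+y_7=f.
\]

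Next I would split on the $\mathbb{F}_2$-rank of $U:=\langle\phi(h_1),\phi(h_2),\phi(h_4),\phi(h_6)\rangle$, which squarefreeness of $\phi(S)$ forces to be $3$ or $4$. In the rank-$3$ case $|U|=8$, the set $\{\phi(h_1),\ldots,\phi(h_7)\}$ coincides with $U\setminus\{0\}$ with its Fano-plane structure; besides the three lines through $\phi(h_1)$ there are four more lines $h_2h_4h_6$, $h_2h_5h_7$, $h_3h_4h_7$, $h_3h_5h_6$, plus seven complementary length-$4$ $\phi$-zero-sums. Applying $(*)$ to all seven lines and using Fano symmetry forces $y_1=\cdots=y_7=y$ with $3y=e$; the seven complementary quadruples give $4y=e$. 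Subtracting yields $y=0$ and hence $e=0$, contradicting $n\ge 3$.

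In the rank-$4$ case $|U|=16$, choose a basis with $\phi(h_1)=a,\phi(h_2)=b,\phi(h_4)=c,\phi(h_6)=d$, so that $\phi(h_8)$ is one of the eight remaining nonzero elements of $H$. By symmetry among the three pairs $(h_2,h_3),(h_4,h_5),(h_6,h_7)$ only two representative subcases need to be treated. When $\phi(h_8)$ is a ``two-pair sum'' such as $b+c$, the additional $\phi$-zero-sums $h_8h_2h_4$, $h_8h_3h_5$ (length $3$) and $h_8h_1h_2h_5$, $h_8h_1h_3h_4$ (length $4$) force $y_8=f$ and $y_2+y_5=y_3+y_4=0$; combined with the first paragraph this gives $f=-f$, so $e=0$, contradiction. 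When $\phi(h_8)\in\{b+c+d,\,a+b+c+d\}$, the four length-$4$ $\phi$-zero-sums formed by $h_8$ together with one element from each pair (with even $a$-parity) force $y_2=y_3=\cdots=y_7=y_8=f/2$; then the length-$5$ $\phi$-zero-sum $h_8h_1h_2h_4h_7$ has $\psi$-sum equal to $3f=\tfrac{3(n+1)}{2}e$, and a direct computation modulo $n$ (writing $n=2k+1$ yields $3f=(k+2)e$) shows $3f\notin\{e,2e\}$ for every odd $n\ge 3$, again contradicting $(*)$.

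The main obstacle is the casework when $|U|=16$: for each possible value of $\phi(h_8)$ one must correctly list all length-$3$, $4$, and $5$ $\phi$-zero-sums of $S$ containing $h_8$, and verify that the resulting system of linear congruences over $C_n$ is inconsistent. Pair symmetry reduces the eight candidates to the two computations above, each of which ultimately forces either $2f=0$ or a value of $3f$ lying outside $\{e,2e\}$ modulo $n$.
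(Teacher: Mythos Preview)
Your proposal is essentially correct and reaches the desired contradiction, though it is organized quite differently from the paper's argument.

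\textbf{Comparison with the paper.} The paper does not split according to the rank of $U=\langle\phi(h_1),\phi(h_2),\phi(h_4),\phi(h_6)\rangle$ or the value of $\phi(h_8)$. Instead it fixes the single $5$-term subsequence $W=h_1h_2h_4h_6h_8$, uses $\mathsf D(C_2^4)=5$ to extract a $\phi$-zero-sum $V\mid W$, and does a short case analysis on $|V|\in\{3,4,5\}$ together with the symmetry of the three pairs. Each case is disposed of by replacing some $h_{2i}$ by its partner $h_{2i+1}$ to produce a second $\phi$-zero-sum whose $\psi$-sum is incompatible with $(*)$. This is more uniform (no rank dichotomy) and involves fewer subcases. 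Your approach is more structural: you classify configurations globally and then over-constrain the $y_i$ by linear algebra over $C_n$. It buys you an explicit picture of what is going on in each configuration, at the cost of a longer case list.

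\textbf{One imprecision.} In your rank-$3$ case, the sentence ``Fano symmetry forces $y_1=\cdots=y_7=y$'' is not quite right: the symmetry group of the Fano plane does not by itself force a solution to be constant, and an averaging argument would require $\gcd(168,n)=1$. What actually happens is that the seven line equations together with $y_2+y_3=y_4+y_5=y_6+y_7=f$ are already inconsistent: adding $y_2+y_4+y_6=e$ and $y_2+y_5+y_7=e$ gives $2y_2+2f=2e$, hence $y_2=f$; the analogous sum with $y_3$ gives $y_3=f$; but $y_2+y_3=f$ then yields $f=0$, i.e.\ $e=0$. So your conclusion stands, but the justification should be this direct computation rather than an appeal to symmetry. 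Similarly, in the rank-$4$ subcase $\phi(h_8)=a+b+c+d$ the specific length-$5$ sequence $h_8h_1h_2h_4h_7$ is not a $\phi$-zero-sum; you should either invoke the pair-swap $h_2\leftrightarrow h_3$ to reduce to $\phi(h_8)=b+c+d$, or name the correct length-$5$ sequence explicitly.
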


\begin{proof}
Assume to the contrary that the property $(*)$ holds.

Since $\sigma(\phi(h_1h_2h_3))=\sigma(\phi(h_2h_3h_4h_5))=\sigma(\phi(h_4h_5h_1))=\sigma(\phi(h_4h_5h_6h_7))=\sigma(\phi(h_6h_7h_1))=0$, we obtain that $\sigma(\psi(h_1h_2h_3))=\sigma(\psi(h_2h_3h_4h_5))=\sigma(\psi(h_4h_5h_1))=\sigma(\psi(h_4h_5h_6h_7)))=\sigma(\psi(h_6h_7h_1))=e$ which implies that $\psi(h_1)=\psi(h_2)+\psi(h_3)=\psi(h_4)+\psi(h_5)=\psi(h_6)+\psi(h_7)=\frac{n+1}{2}e$.

Let $\psi(h_i)=k_ie$ where $1\le i\le 8$ and $0\le k_i\le n-1$. Without loss of generality, we can assume that $k_2\le k_3,\, k_4\le k_5,\, k_6\le k_7$. We consider the sequence $W=h_1h_2h_4h_6h_8$ (see Figure \ref{f2}).

\begin{figure}[ht]
\begin{center}

\setlength{\unitlength}{0.7 mm}%
\begin{picture}(102.66,34.98)(0,0)
\put(30.73,22.97){\circle*{1.80}}
\put(30.39,2.62){\circle*{1.80}}
\put(50.74,22.97){\circle*{1.80}}
\put(50.39,2.62){\circle*{1.80}}
\put(70.40,22.97){\circle*{1.80}}
\put(90.40,22.97){\circle*{1.80}}
\put(10.38,12.45){\circle*{1.80}}
\put(7.20,14.90){\fontsize{8.53}{10.24}\selectfont \makebox(7.5, 3.0)[l]{$h_1$\strut}}
\put(26.76,24.75){\fontsize{8.53}{10.24}\selectfont \makebox(7.5, 3.0)[l]{$h_2$\strut}}
\put(26.90,5.09){\fontsize{8.53}{10.24}\selectfont \makebox(7.5, 3.0)[l]{$h_3$\strut}}
\put(46.60,24.75){\fontsize{8.53}{10.24}\selectfont \makebox(7.5, 3.0)[l]{$h_4$\strut}}
\put(46.87,5.09){\fontsize{8.53}{10.24}\selectfont \makebox(7.5, 3.0)[l]{$h_5$\strut}}
\put(65.91,24.75){\fontsize{8.53}{10.24}\selectfont \makebox(7.5, 3.0)[l]{$h_6$\strut}}
\put(67.13,5.09){\fontsize{8.53}{10.24}\selectfont \makebox(7.5, 3.0)[l]{$h_7$\strut}}
\put(86.49,24.75){\fontsize{8.53}{10.24}\selectfont \makebox(7.5, 3.0)[l]{$h_8$\strut}}
\put(70.47,2.62){\circle*{1.80}}
\thinlines\lbezier(20.76,32.78)(100.47,32.78)\lbezier(100.47,32.78)(100.47,18.12)\lbezier(100.47,18.12)(22.50,18.12)\lbezier(22.50,18.12)(10.48,5.64)\lbezier(10.48,5.64)(2.00,13.93)\lbezier(2.00,13.93)(20.76,32.78)
\end{picture}%

\end{center}
\caption{}\label{f2}
\end{figure}

Since $\phi(W)\in \mathscr{F}(\phi(G))$ and $\mathsf D(\phi(G))=\mathsf D(C_2^4)=5$, there exists a subsequence $V\mid W$ such that $\sigma(\phi(V))=0$ and $|V|\in \{3, 4, 5\}$. We distinguish three cases depending on $|V|$.

\smallskip
\noindent\textbf{Case 1.} $|V|=3$.
\smallskip

Obviously $h_1\nmid v$. Then by symmetry, we only need to consider $V=h_2h_4h_6$ or $h=h_2h_4h_8$.

Suppose that $V=h_2h_4h_6$. Then $\sigma(\phi(h_2h_4h_6))=\sigma(\phi(h_3h_5h_6))=0$ and  hence $\sigma(\psi(h_2h_4h_6))=\sigma(\psi(h_3h_5h_6))=e$. Therefore $\sigma(\psi(h_2h_4))=\sigma(\psi(h_3h_5))=\frac{n+1}{2}e$ by $\sigma(\psi(h_2h_3h_4h_5))=e$. It follows by that  $\psi(h_6)=\frac{n+1}{2}e$, a contradiction to $k_6\le k_7$ and $\psi(h_6)+\psi(h_7)=\frac{n+1}{2}e$.

Suppose that $V=h_2h_4h_8$. Then $\sigma(\phi(h_2h_4h_8))=\sigma(\phi(h_3h_5h_8))=0$ and  hence $\sigma(\psi(h_2h_4h_8))=\sigma(\psi(h_3h_5h_8))=e$. Therefore $\sigma(\psi(h_2h_4))=\sigma(\psi(h_3h_5))=\frac{n+1}{2}e$ by $\sigma(\psi(h_2h_3))=\sigma(\psi(h_4h_5))=\frac{n+1}{2}e$. It follows by $k_2\le k_3$ and $k_4\le k_5$ that $\psi(h_2)=\psi(h_3)=\psi(h_4)=\psi(h_5)$ and  $\psi(h_8)=\frac{n+1}{2}e$. Therefore by $\sigma(\phi(h_1h_3h_4h_8))=\sigma(\phi(h_2h_4h_8))=0$, we obtain that $\sigma(\psi(h_1h_3h_4h_8))=e$. But $\sigma(\psi(h_1h_3h_4h_8))=\frac{n+1}{2}e+\frac{n+1}{2}e+\frac{n+1}{2}e\neq e$, a contradiction.

\smallskip
\noindent\textbf{Case 2.} $|V|=4$.
\smallskip

By symmetry,
we only need to consider $V=h_1h_2h_4h_6$ or $V=h_2h_4h_6h_8$ or $V=h_1h_2h_4h_8$.

Suppose that $V=h_1h_2h_4h_6$. Then $\sigma(\phi(h_1h_2h_4h_6))=\sigma(\phi(h_3h_4h_6))=\sigma(\phi(h_2h_4h_7))=0$. Thus we obtain that  $\sigma(\psi(h_3h_4h_6))=\sigma(\psi(h_2h_4h_7))=e$ which implies that $\sigma(\psi(h_3h_6))=\sigma(\psi(h_2h_7))=\frac{n+1}{2}e$ by $\sigma(\psi(h_2h_3h_6h_7))=e$. Therefore $\psi(h_4)=\frac{n+1}{2}e$, a contradiction to $k_4\le k_5$ and $\psi(h_4+h_5)=\frac{n+1}{2}e$.

Suppose that $V=h_2h_4h_6h_8$. Then $\sigma(\phi(h_2h_4h_6h_8))=\sigma(\phi(h_3h_5h_6h_8))=0$ and hence $\sigma(\psi(h_2h_4h_6h_8))=\sigma(\psi(h_3h_5h_6h_8))=e$. Thus $\sigma(\psi(h_2h_4))=\sigma(\psi(h_3h_5))=\sigma(\psi(h_6h_8))=\frac{n+1}{2}e$. By $k_2\le k_3,k_4\le k_5$, we obtain that  $\psi(h_2)=\psi(h_3)=\psi(h_4)=\psi(h_5)$.  Since $\sigma(\phi(h_1h_3h_4h_6h_8))=\sigma(\phi(h_2h_4h_6h_8))=0$, we obtain that $\sigma(\psi(h_1h_3h_4h_6h_8))=\psi(h_1)+e\in\{e,2e\}$. Thus $\psi(h_1)\in \{0,e\}$, a contradiction to $\psi(h_1)=\frac{n+1}{2}e$.

Suppose that $V=h_1h_2h_4h_8$. Then $\sigma(\phi(h_1h_2h_4h_8))=\sigma(\phi(h_1h_3h_5h_8))=0$ and hence  $\sigma(\psi(h_1h_2h_4h_8))=\sigma(\psi(h_1h_3h_5h_8))=e$. Thus $\sigma(\psi(h_2h_4))=\sigma(\psi(h_3h_5))=\sigma(\psi(h_1h_8))=\frac{n+1}{2}e$. By $k_2\le k_3,k_4\le k_5$, we obtain that  $\psi(h_2)=\psi(h_3)=\psi(h_4)=\psi(h_5)$. Since $\psi(h_1)=\frac{n+1}{2}e$, we obtain that $\psi(h_8)=0$. It follows that $\sigma(\phi(h_3h_4h_8))=\sigma(\phi(h_1h_2h_4h_8))=0$ and $\sigma(\psi(h_3h_4h_8))=\frac{n+1}{2}e+0\neq e$, a contradiction.

\smallskip
\noindent\textbf{Case 3.} $|V|=5$. Then $V=h_1h_2h_4h_6h_8$.
\smallskip

Then $\sigma(\phi(h_1h_2h_4h_6h_8))=\sigma(\phi(h_2h_4h_7h_8))=\sigma(\phi(h_3h_5h_7h_8))=0$ and hence $\sigma(\psi(h_2h_4h_7h_8))=\sigma(\psi(h_3h_5h_7h_8))=e$. Thus $\sigma(\phi(h_2h_4))=\sigma(\phi(h_3h_5))=\frac{n+1}{2}e$.  By $k_2\le k_3,k_4\le k_5$, we obtain that  $\psi(h_2)=\psi(h_3)=\psi(h_4)=\psi(h_5)$. Since $\sigma(\phi(h_1h_2h_4h_6h_8))=\sigma(\phi(h_3h_4h_6h_8))=0$, we obtain that  $\sigma(\psi(h_1h_2h_4h_6h_8))=\psi(h_1)+\sigma(\psi(h_3h_4h_6h_8))=\frac{n+1}{2}e+e\notin\{e,2e\}$, a contradiction.

\end{proof}

\begin{lemma}\label{L3} Let $G,H,K$ and $\phi,\psi$ be as above.
Let $ K=\left<e\right>$ and $S=h_1\cdot\ldots\cdot h_8$ be a sequence over $G\setminus\{0\}$ with $\phi(h_1)=\phi(h_2)+\phi(h_3)=\phi(h_4)+\phi(h_5)$ and $\psi(h_3)=\psi(h_5)=\frac{n+1}{2}e$.
If  $\phi(S)$  is a squarefree sequence with $0\not\in \supp(\phi(S))$, then the following property $(*)$ does not hold.
 \[
     \left\{\begin{aligned} & \text{For any subsequence $V$ of $S$ with $\sigma(\phi(V))=0$, \text{ we have that }}\\
      &\sigma(\psi(V)) =\left \{ \begin{aligned}&  e ,&\quad \quad \quad& \mbox{ if } |V|=3\mbox{ or }4, \\
     & e\mbox{ or }2e,&\quad &\mbox{ if } |V|=5.
     \end{aligned} \right.
    \end{aligned}\qquad (*)\right.
     \]
\end{lemma}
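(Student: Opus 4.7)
The plan is to suppose property~$(*)$ holds and derive a contradiction, in the style of Lemma~\ref{L2}. The argument proceeds in three stages.

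First, I will pin down the $\psi$-values of $h_1, h_2, h_4$. The prescribed $\phi$-relations guarantee that $h_1 h_2 h_3$, $h_1 h_4 h_5$, and $h_2 h_3 h_4 h_5$ are all zero-sum under $\phi$, of lengths $3,3,4$ respectively. Applying $(*)$ to each and substituting the hypothesis $\psi(h_3) = \psi(h_5) = \tfrac{n+1}{2}e$, I obtain the system
\[
\psi(h_1) + \psi(h_2) = \psi(h_1) + \psi(h_4) = \tfrac{n+1}{2}e \quad \text{and} \quad \psi(h_2) + \psi(h_4) = 0.
\]
Since $n$ is odd this forces $\psi(h_2) = \psi(h_4) = 0$ and $\psi(h_1) = \tfrac{n+1}{2}e$.

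Second, I will apply the Davenport bound to a carefully chosen 5-element subsequence. Take $W = h_1 h_2 h_4 h_6 h_8$. Since $\mathsf D(\phi(G)) = \mathsf D(C_2^4) = 5$ and $\phi(W)$ is squarefree with no zero entry, there exists $V \mid W$ with $\sigma(\phi(V)) = 0$ and $|V| \in [3,5]$. Candidates of the form $V \supseteq h_1 h_2$ (resp.\ $V \supseteq h_1 h_4$) would force some $\phi(h_j)$ with $j \in \{4,6,8\}$ to coincide with $\phi(h_3)$ (resp.\ $\phi(h_5)$), violating squarefreeness, and are therefore eliminated. For each surviving candidate, $(*)$ combined with the values from Stage~1 yields a linear constraint on $\psi(h_6)$ and $\psi(h_8)$ modulo~$n$---typically of the form $\psi(h_6) = e$, $\psi(h_6)+\psi(h_8) = e$, or $\psi(h_6)+\psi(h_8) = \tfrac{n+1}{2}e$.

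Third, I will iterate Stage~2 with several auxiliary 5-element sequences---for instance $W' = h_1 h_2 h_4 h_6 h_7$, $W'' = h_1 h_2 h_4 h_7 h_8$, and $W''' = h_3 h_5 h_6 h_7 h_8$, the last exploiting $\psi(h_3) = \psi(h_5) = \tfrac{n+1}{2}e$ together with $2 \cdot \tfrac{n+1}{2} \equiv 1 \pmod n$ to produce constraints of a different flavour. Combining the resulting linear constraints on $\psi(h_6), \psi(h_7), \psi(h_8)$ with the $\phi$-identities each case demands (which themselves restrict which configurations of $\phi(h_6), \phi(h_7), \phi(h_8)$ are realisable), I expect that in every branch two of the collected equations are incompatible modulo~$n$, giving the sought contradiction.

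The principal obstacle is bookkeeping: each 5-element $W$ contributes on the order of a dozen candidate zero-sum subsequences $V$, and while squarefreeness prunes many, tracking the remaining constraints and verifying their mutual incompatibility across the several $W$'s is tedious. The argument is structurally identical to that of Lemma~\ref{L2}, only with the additional freedom of three ``unstructured'' elements $h_6, h_7, h_8$ requiring several iterations of the main step before all configurations are closed off.
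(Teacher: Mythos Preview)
Your Stage~1 is correct and matches the paper verbatim. Your Stages~2--3 head in the right direction but miss the organizing insight that makes the paper's argument short: after Stage~1 the paper splits into exactly two cases according to whether every pair $i,j\in\{6,7,8\}$ satisfies $\psi(h_i)+\psi(h_j)=\tfrac{n+1}{2}e$. If so (Case~1), all three of $\psi(h_6),\psi(h_7),\psi(h_8)$ equal $\tfrac{n+1}{4}e$ or $\tfrac{3n+1}{4}e$, and a \emph{single} $W=h_2h_4h_6h_7h_8$ (whose $\psi$-profile is now completely known) dispatches every possible $|V|$ by direct arithmetic. If not (Case~2), relabel so that $\psi(h_6)+\psi(h_7)\ne\tfrac{n+1}{2}e$, and a \emph{single} $W=h_1h_2h_4h_6h_7$ suffices, the extra inequality closing off precisely the branches your unstructured iteration would otherwise have to chase. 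Your plan to cross-reference constraints from several $W$'s would likely succeed, but the dichotomy collapses the bookkeeping you anticipate to two short sub-cases with one $W$ each.

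One small correction to your Stage~2: the pruning claim that $V\supseteq h_1h_2$ forces some $\phi(h_j)=\phi(h_3)$ is valid only for $|V|=3$. For $|V|\in\{4,5\}$ the resulting condition is $\sigma(\phi(V(h_1h_2)^{-1}))=\phi(h_3)$, a sum constraint on two or three elements rather than a coincidence of single values, so those candidates are not automatically eliminated by squarefreeness and must be handled separately.
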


\begin{proof}
Assume to the contrary that the property $(*)$ holds.

Since  $\sigma(\phi(h_1))=\sigma(\phi(h_2h_3))=\sigma(\phi(h_4h_5))$, we obtain that $\sigma(\phi(h_1h_2h_3))=\sigma(\phi(h_2h_3h_4h_5))=\sigma(\phi(h_4h_5h_1))=0$ which implies that $\sigma(\psi(h_1h_2h_3))=\sigma(\psi(h_2h_3h_4h_5))=\sigma(\psi(h_4h_5h_1))=e$. Therefore $\psi(h_1)=\frac{n+1}{2}e$ and $\psi(h_2)=\psi(h_4)=0$ by $\psi(h_3)=\psi(h_5)=\frac{n+1}{2}e$.

We distinguish the following two cases to get contradictions to our assumption.

\medskip
\noindent{\bf Case 1.}  $\psi(h_6)=\psi(h_7)=\psi(h_8)=\frac{n+1}{4}e$ if  $n\equiv 3\pmod 4$ and $\psi(h_6)=\psi(h_7)=\psi(h_8)=\frac{3n+1}{4}e$ if  $n\equiv 1\pmod 4$.
\smallskip

Consider the sequence $W=h_2h_4h_6h_7h_8$ (see Figure \ref{f3}).

\begin{figure}[ht]
\begin{center}

\setlength{\unitlength}{0.7 mm}%
\begin{picture}(118.00,34.33)(0,0)
\put(26.32,22.85){\circle*{1.80}}
\put(25.97,2.50){\circle*{1.80}}
\put(46.32,22.85){\circle*{1.80}}
\put(45.98,2.50){\circle*{1.80}}
\put(65.98,22.85){\circle*{1.80}}
\put(85.99,22.85){\circle*{1.80}}
\put(106.34,22.85){\circle*{1.80}}
\put(5.97,12.33){\circle*{1.80}}
\put(2.00,14.28){\fontsize{8.53}{10.24}\selectfont \makebox(7.5, 3.0)[l]{$h_1$\strut}}
\put(22.52,25.00){\fontsize{8.53}{10.24}\selectfont \makebox(7.5, 3.0)[l]{$h_2$\strut}}
\put(22.49,4.96){\fontsize{8.53}{10.24}\selectfont \makebox(7.5, 3.0)[l]{$h_3$\strut}}
\put(42.18,25.00){\fontsize{8.53}{10.24}\selectfont \makebox(7.5, 3.0)[l]{$h_4$\strut}}
\put(41.46,4.79){\fontsize{8.53}{10.24}\selectfont \makebox(7.5, 3.0)[l]{$h_5$\strut}}
\put(61.50,25.00){\fontsize{8.53}{10.24}\selectfont \makebox(7.5, 3.0)[l]{$h_6$\strut}}
\put(82.71,25.00){\fontsize{8.53}{10.24}\selectfont \makebox(7.5, 3.0)[l]{$h_7$\strut}}
\put(103.41,25.00){\fontsize{8.53}{10.24}\selectfont \makebox(7.5, 3.0)[l]{$h_8$\strut}}
\lbezier(16.31,32.33)(16.31,17.33)\lbezier(16.31,17.33)(116.00,17.33)\lbezier(116.00,17.33)(116.00,32.33)\lbezier(116.00,32.33)(16.31,32.33)
\end{picture}%

\end{center}
\caption{}\label{f3}
\end{figure}

Suppose that $n\equiv 3\pmod 4$.
 Then $\psi(W)=0^2(\frac{n+1}{4}e)^3$. Since $\phi(W)\in \mathscr{F}(\phi(G))$ and $\mathsf D(\phi(G))=\mathsf D(C_2^4)=5$, there exists a subsequence $V\mid W$ such that $\sigma(\phi(V))=0$ and $|V|\in \{3, 4, 5\}$. If $|V|=5$, then $\sigma(\psi(V))=3\frac{n+1}{4}e\notin\{e,2e\}$, a contradiction. If $|V|=4$, then $\sigma(\psi(V))=e\in \{2\frac{n+1}{4}e,\,3\frac{n+1}{4}e\}$, a contradiction. Thus $|V|=3$ and  $\sigma(\psi(V))=e\in \{\frac{n+1}{4}e,\,2\frac{n+1}{4}e,3\frac{n+1}{4}e\}$ which implies that $n=3$ and $h_2h_4\t V$. But $\sigma(\phi(Vh_3h_5(h_2h_4)^{-1}))=0$ and $\sigma(\psi(Vh_3h_5(h_2h_4)^{-1})))=2\frac{n+1}{2}e+\frac{n+1}{4}e=2e\neq e$, a contradiction.

Suppose that $n\equiv 1\pmod 4$.
 Then $\psi(W)=0^2(\frac{3n+1}{4}e)^3$. Since $\phi(W)\in \mathscr{F}(\phi(G))$ and $\mathsf D(\phi(G))=\mathsf D(C_2^4)=5$, there exists a subsequence $V\mid W$ such that $\sigma(\phi(V))=0$ and $|V|\in \{3, 4, 5\}$. If $|V|=5$, then $\sigma(\psi(V))=3\frac{3n+1}{4}e\notin\{e,2e\}$ which implies that $n=5$. Since $\sigma(\phi(h_3h_5h_6h_7h_8))=0$, we obtain that $\sigma(\psi(h_3h_5h_6h_7h_8))=3e+3e+4e+4e+4e=3e\notin\{e,2e\}$, a contradiction. If $|V|=4$, then $\sigma(\psi(V))=e\in \{2\frac{3n+1}{4}e,\,3\frac{3n+1}{4}e\}$, a contradiction. Thus $|V|=3$ and  $\sigma(\psi(V))=e\in \{\frac{3n+1}{4}e,\,2\frac{3n+1}{4}e,3\frac{3n+1}{4}e\}$, a contradiction.

\medskip
\noindent{\bf Case 2.}  There exist distinct $i,j\in [6,8]$ such that $\psi(h_i)+\psi(h_j)\neq \frac{n+1}{2}e$, say  $\psi(h_6)+\psi(h_7)\neq \frac{n+1}{2}e$.
\smallskip

  Consider the sequence $W=h_1h_2h_4h_6h_7$ (see Figure \ref{f4}).

  \begin{figure}[ht]
  \begin{center}
 \setlength{\unitlength}{0.7 mm}%
 \begin{picture}(102.66,34.98)(0,0)
 \put(30.73,22.97){\circle*{1.80}}
 \put(30.39,2.62){\circle*{1.80}}
 \put(50.74,22.97){\circle*{1.80}}
 \put(50.39,2.62){\circle*{1.80}}
 \put(70.40,22.97){\circle*{1.80}}
 \put(90.40,22.97){\circle*{1.80}}
 \put(10.38,12.45){\circle*{1.80}}
 \put(7.41,14.40){\fontsize{8.53}{10.24}\selectfont \makebox(7.5, 3.0)[l]{$h_1$\strut}}
 \put(26.76,25.00){\fontsize{8.53}{10.24}\selectfont \makebox(7.5, 3.0)[l]{$h_2$\strut}}
 \put(26.90,5.09){\fontsize{8.53}{10.24}\selectfont \makebox(7.5, 3.0)[l]{$h_3$\strut}}
 \put(46.60,25.00){\fontsize{8.53}{10.24}\selectfont \makebox(7.5, 3.0)[l]{$h_4$\strut}}
 \put(46.87,4.92){\fontsize{8.53}{10.24}\selectfont \makebox(7.5, 3.0)[l]{$h_5$\strut}}
 \put(65.91,25.00){\fontsize{8.53}{10.24}\selectfont \makebox(7.5, 3.0)[l]{$h_6$\strut}}
 \put(86.49,25.00){\fontsize{8.53}{10.24}\selectfont \makebox(7.5, 3.0)[l]{$h_7$\strut}}
 \thinlines\lbezier(20.76,32.78)(100.47,32.78)\lbezier(100.47,32.78)(100.47,18.12)\lbezier(100.47,18.12)(22.50,18.12)\lbezier(22.50,18.12)(10.48,5.64)\lbezier(10.48,5.64)(2.00,13.93)\lbezier(2.00,13.93)(20.76,32.78)
 \end{picture}%

  \end{center}
  \caption{}\label{f4}
  \end{figure}

  Since $\phi(W)\in \mathscr{F}(\phi(G))$ and $\mathsf D(\phi(G))=\mathsf D(C_2^4)=5$, there exists a subsequence $V\mid W$ such that $\sigma(\phi(V))=0$ and $|V|\in \{3, 4, 5\}$. We distinguish three cases depending on $|V|$.

Suppose that $|V|=5$. Then $\sigma(\psi(V))\neq \frac{n+1}{2}e+0+0+\frac{n+1}{2}e=e$ which implies that $\sigma(\psi(V))=2e$. Thus $\sigma(\psi(h_6h_7))=\frac{n+3}{2}e$. It follows that $\sigma(\psi(h_1h_3h_5h_6h_7))=3e\notin\{e,2e\}$, a contradiction to  $\sigma(\phi(h_1h_3h_5h_6h_7))=\sigma(\phi(V))=0$.

Suppose that $|V|=4$. If $h_2h_4\t V$, then $\sigma(\phi(V(h_2h_4)^{-1}h_3h_5))=0$ and $\sigma(\psi(V(h_2h_4)^{-1}h_3h_5))=2e$, a contradiction. Thus $h_2h_4\nmid V$. By symmetry, we only need to consider $V=h_1h_2h_6h_7$. But $\sigma(\psi(V))\neq \frac{n+1}{2}e+0+\frac{n+1}{2}e=e$, a contradiction.

Suppose that $|V|=3$. By symmetry, we only need to consider $V=h_1h_6h_7$, $V=h_2h_6h_7$, $V=h_2h_4h_6$ or $V=h_1h_2h_4$. If $V=h_1h_6h_7$, then $\sigma(\psi(V))\neq e$, a contradiction.
If $V=h_2h_6h_7$, then $\sigma(\phi(h_1h_3h_6h_7))=\sigma(\phi(V))=0$ and hence $\sigma(\psi(h_1h_3h_6h_7))=e$. It follows that $\psi(h_6)+\psi(h_7)=0$ which implies that $\sigma(\psi(V))=0$, a contradiction. If $V=h_2h_4h_6$, then $\sigma(\psi(V))=e$ which implies that $\psi(h_6)=e$. Thus $\sigma(\psi(h_3h_5h_6))=2e$, a contradiction to $\sigma(\phi(h_3h_5h_6))=\sigma(\phi(V))=0$. If $V=h_1h_2h_4$, then $\phi(h_3)=\phi(h_4)$, a contradiction.
\end{proof}

\begin{lemma}\label{L4} Let $G,H,K$ and $\phi,\psi$ be as above.
Let $ K=\left<e\right>$ and $S=h_1\cdot\ldots\cdot h_8$ be a sequence over $G\setminus\{0\}$ with $\phi(h_1)=\phi(h_2)+\phi(h_3)=\phi(h_4)+\phi(h_5)$ and $\psi(h_6)=\psi(h_7)=\frac{n+1}{2}e$.
If  $\phi(S)$  is a squarefree sequence with $0\not\in \supp(\phi(S))$, then the following property $(*)$ does not hold.
  \[
      \left\{\begin{aligned} & \text{For any subsequence $V$ of $S$ with $\sigma(\phi(V))=0$, \text{ we have that }}\\
       &\sigma(\psi(V)) =\left \{ \begin{aligned}&  e ,&\quad \quad \quad& \mbox{ if } |V|=3\mbox{ or }4, \\
      & e\mbox{ or }2e,&\quad &\mbox{ if } |V|=5.
      \end{aligned} \right.
     \end{aligned}\qquad (*)\right.
      \]
\end{lemma}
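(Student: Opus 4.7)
The plan is to adapt the strategy of Lemma~\ref{L3}. Assume for contradiction that property $(*)$ holds. From the hypothesis $\phi(h_1)=\phi(h_2)+\phi(h_3)=\phi(h_4)+\phi(h_5)$, the sequences $h_1h_2h_3$, $h_1h_4h_5$, and $h_2h_3h_4h_5$ are $\phi$-zero-sum; applying $(*)$ to each yields $\sigma(\psi(h_1h_2h_3))=\sigma(\psi(h_1h_4h_5))=\sigma(\psi(h_2h_3h_4h_5))=e$, and solving this linear system gives $\psi(h_1)=\frac{n+1}{2}e$ and $\psi(h_2)+\psi(h_3)=\psi(h_4)+\psi(h_5)=\frac{n+1}{2}e$. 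Combined with the given $\psi(h_6)=\psi(h_7)=\frac{n+1}{2}e$, we now have three elements $h_1,h_6,h_7$ whose $\psi$-value equals $\frac{n+1}{2}e$. Moreover, $\phi(h_1)+\phi(h_6)+\phi(h_7)\neq 0$: otherwise $h_1h_6h_7$ would be a length-$3$ $\phi$-zero-sum, and $(*)$ would force $3\cdot\frac{n+1}{2}e=e$, which simplifies to $\frac{n+1}{2}e=0$ and is impossible for odd $n\ge 3$.

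Next, following the template of Lemma~\ref{L3}, we select a $5$-element subsequence $W\mid S$; natural candidates are $W=h_1h_2h_4h_6h_7$ and, if additional flexibility is required, $W=h_2h_4h_6h_7h_8$. Since $\mathsf D(\phi(G))=\mathsf D(C_2^4)=5$, there is a $\phi$-zero-sum $V\mid W$ with $|V|\in\{3,4,5\}$, and $(*)$ forces $\sigma(\psi(V))=e$ when $|V|\in\{3,4\}$ and $\sigma(\psi(V))\in\{e,2e\}$ when $|V|=5$. The core technique is then to exploit the identities $\phi(h_1)=\phi(h_2h_3)=\phi(h_4h_5)$ to rewrite $V$ into another $\phi$-zero-sum $V'$ of $S$ (for instance by replacing an occurrence of $h_1$ in $V$ with $h_2h_3$ or $h_4h_5$, or by swapping $h_2h_3\leftrightarrow h_4h_5$); property $(*)$ also constrains $\sigma(\psi(V'))$, and comparing the two $\psi$-sums, together with the identities from the first paragraph and the equality $\psi(h_6)+\psi(h_7)=e$, yields numerical relations on the $\psi(h_i)$ that either force $\frac{n+1}{2}e=0$ or contradict the oddness of $n$.

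The main obstacle is the combinatorial case analysis. Since $\phi(h_6),\phi(h_7),\phi(h_8)$ are only required to be distinct nonzero elements of $H\setminus\{0,\phi(h_1),\ldots,\phi(h_5)\}$, the collection of $\phi$-zero-sum subsequences of a chosen $W$ depends on these unknown values and must be enumerated configuration by configuration; if the candidate $W=h_1h_2h_4h_6h_7$ fails to give a direct contradiction, one must bring $h_8$ into play via $W=h_2h_4h_6h_7h_8$ or a similar choice. As in Lemma~\ref{L3}, a further split according to $n\bmod 4$ will likely be needed to close the length-$5$ subcases, since the congruences that arise typically take the form $4x\equiv\pm 1\pmod n$, whose solvability depends on this residue.
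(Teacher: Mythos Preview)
Your plan is correct and matches the paper's proof almost exactly: the paper also derives $\psi(h_1)=\psi(h_2)+\psi(h_3)=\psi(h_4)+\psi(h_5)=\frac{n+1}{2}e$, then takes the single test sequence $W=h_1h_2h_4h_6h_7$ and runs the case analysis on $|V|\in\{3,4,5\}$, using precisely the substitutions $h_1\leftrightarrow h_2h_3$, $h_1\leftrightarrow h_4h_5$, $h_2h_4\leftrightarrow h_3h_5$ that you describe. Two of your anticipated complications do not materialize: the element $h_8$ is never needed (every subcase closes using only $h_1,\ldots,h_7$), and no split on $n\bmod 4$ is required---all contradictions reduce to simple identities like $\frac{n+3}{2}e\neq e$, $\psi(h_i)=0$ versus $\psi(h_i)+\psi(h_j)=\frac{n+1}{2}e$ with the ordering $k_i\le k_j$, or $\sigma(\psi(\cdot))=0$ against $(*)$.
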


\begin{proof}
Assume to the contrary that the property $(*)$ holds.

Since  $\phi(h_1)=\phi(h_2)+\phi(h_3)=\phi(h_4)+\phi(h_5)$,  we obtain that $\sigma(\phi(h_1h_2h_3))=\sigma(\phi(h_2h_3h_4h_5))=\sigma(\phi(h_4h_5h_1))=0$ which implies that $\sigma(\psi(h_1h_2h_3))=\sigma(\psi(h_2h_3h_4h_5))=\sigma(\psi(h_4h_5h_1))=e$. Therefore $\psi(h_1)=\psi(h_2)+\psi(h_3)=\psi(h_4)+\psi(h_5)=\frac{n+1}{2}e$.

Let $\psi(h_i)=k_ie$ where $1\le i\le 8$ and $0\le k_i\le n-1$. Without loss of generality, we can assume that $k_2\le k_3,\, k_4\le k_5$. Consider the sequence $W=h_1h_2h_4h_6h_7$ (see Figure \ref{f5}).

\begin{figure}[ht]
\begin{center}

\setlength{\unitlength}{0.7 mm}%
\begin{picture}(102.66,34.98)(0,0)
\put(30.73,22.97){\circle*{1.80}}
\put(30.39,2.62){\circle*{1.80}}
\put(50.74,22.97){\circle*{1.80}}
\put(50.39,2.62){\circle*{1.80}}
\put(70.40,22.97){\circle*{1.80}}
\put(90.40,22.97){\circle*{1.80}}
\put(10.38,12.45){\circle*{1.80}}
\put(7.41,14.40){\fontsize{8.53}{10.24}\selectfont \makebox(7.5, 3.0)[l]{$h_1$\strut}}
\put(26.76,25.00){\fontsize{8.53}{10.24}\selectfont \makebox(7.5, 3.0)[l]{$h_2$\strut}}
\put(26.90,5.09){\fontsize{8.53}{10.24}\selectfont \makebox(7.5, 3.0)[l]{$h_3$\strut}}
\put(46.60,25.00){\fontsize{8.53}{10.24}\selectfont \makebox(7.5, 3.0)[l]{$h_4$\strut}}
\put(46.87,4.92){\fontsize{8.53}{10.24}\selectfont \makebox(7.5, 3.0)[l]{$h_5$\strut}}
\put(65.91,25.00){\fontsize{8.53}{10.24}\selectfont \makebox(7.5, 3.0)[l]{$h_6$\strut}}
\put(86.49,25.00){\fontsize{8.53}{10.24}\selectfont \makebox(7.5, 3.0)[l]{$h_7$\strut}}
\thinlines\lbezier(20.76,32.78)(100.47,32.78)\lbezier(100.47,32.78)(100.47,18.12)\lbezier(100.47,18.12)(22.50,18.12)\lbezier(22.50,18.12)(10.48,5.64)\lbezier(10.48,5.64)(2.00,13.93)\lbezier(2.00,13.93)(20.76,32.78)
\end{picture}%

\end{center}
\caption{}\label{f5}
\end{figure}

 Since $\phi(W)\in \mathscr{F}(\phi(G))$ and $\mathsf D(\phi(G))=\mathsf D(C_2^4)=5$, there exists a subsequence $V\mid W$ such that $\sigma(\phi(V))=0$ and $|V|\in \{3, 4, 5\}$. We distinguish three cases depending on $|V|$.

\medskip
\noindent{\bf Case 1.} $|V|=3$.
\smallskip

By symmetry, we only need to consider $V=h_1h_6h_7$, $V=h_2h_4h_6$ or $V=h_4h_6h_7$.

Suppose that $V=h_1h_6h_7$. Then $\sigma(\psi(V))=\frac{n+3}{2}e\neq e$, a contradiction.

Suppose that $V=h_2h_4h_6$. Then  $\sigma(\phi(h_3h_5h_6))=\sigma(\phi(V))=0$ and hence $\sigma(\psi(h_3h_5h_6))=\sigma(\psi(V))=e$. Thus $\sigma(\psi(h_3h_5))=\sigma(\psi(h_2h_4))=\frac{n+1}{2}e$ which implies that  $\psi(h_3)=\psi(h_4)=\psi(h_5)=\psi(h_6)$ by $k_2\le k_3, k_4\le k_5$. Therefore $\sigma(\psi(h_1h_3h_4h_6))=3\frac{n+1}{2}\neq e$, a contradiction to $\sigma(\phi(h_1h_3h_4h_6))=\sigma(\phi(V))=0$.

Suppose that $V=h_4h_6h_7$. Then  $\sigma(\phi(h_1h_5h_6h_7))=\sigma(\phi(V))=0$ and hence $\sigma(\psi(h_1h_5h_6h_7))=\sigma(\psi(h_4h_6h_7))=e$. Thus $\psi(h_5)=\frac{n-1}{2}e$ and $\psi(h_4)=0$, a contradiction to $\psi(h_4)+\psi(h_5)=\frac{n+1}{2}e$.

\medskip
\noindent{\bf Case 2.} $|V|=4$.
\smallskip

By symmetry, we only need to consider $V=h_2h_4h_6h_7$, $V=h_1h_4h_6h_7$ or $V=h_1h_2h_4h_6$.

Suppose that $V=h_2h_4h_6h_7$. Then $\sigma(\phi(h_3h_5h_6h_7))=\sigma(\phi(V))=0$ and hence $\sigma(\psi(h_3h_5h_6h_7))=\sigma(\psi(V))=e$. Thus $\sigma(\psi(h_3h_5))=\sigma(\psi(h_2h_4))=0$, a contradiction to $\sigma(\psi(h_2h_3h_4h_5))=e$.

Suppose that $V=h_1h_4h_6h_7$. Then $\sigma(\phi(h_5h_6h_7))=\sigma(\phi(V))=0$ and hence $\sigma(\psi(h_5h_6h_7))=e$. Thus $\psi(h_5)=0$, a contradiction to $k_5\ge k_4$ and $\psi(h_4)+\psi(h_5)=\frac{n+1}{2}e$.

Suppose that $V=h_1h_2h_4h_6$. Then  $\sigma(\phi(h_1h_3h_5h_6))=\sigma(\phi(V))=0$ and hence $\sigma(\psi(h_1h_3h_5h_6))=\sigma(\psi(V))=e$. Thus $\sigma(\psi(h_3h_5))=\sigma(\psi(h_2h_4))=0$, a contradiction to $\sigma(\psi(h_2h_3h_4h_5))=e$.

\medskip
\noindent{\bf Case 3.} $|V|=5$.
\smallskip

Then $\sigma(\phi(h_3h_4h_6h_7))=\sigma(\phi(h_2h_5h_6h_7))=\sigma(\phi(V))=0$ which implies that $\sigma(\psi(h_3h_4h_6h_7))=\sigma(\psi(h_2h_5h_6h_7))=e$. Thus $\sigma(\psi(h_3h_4))=\sigma(\psi(h_2h_5))=0$ which implies that $\sigma(\psi(h_2h_3h_4h_5))=0$, a contradiction to $\sigma(\phi(h_2h_3h_4h_5))=0$.
\end{proof}

\begin{prop}\label{LL1} Let $G,H,K$ and $\phi,\psi$ be as above.
Let $ K=\left<e\right>$ and $S$ be a sequence over $G\setminus\{0\}$.
If  $\phi(S)$  is a squarefree sequence of length $|\phi(S)|=8$ with $0\not\in \supp(\phi(S))$ and satisfies the following property $(*)$:
 \[
     \left\{\begin{aligned} & \text{For any subsequence $V$ of $S$ with $\sigma(\phi(V))=0$, \text{ we have that }}\\
      &\sigma(\psi(V)) =\left \{ \begin{aligned}&  e ,&\quad \quad \quad& \mbox{ if } |V|=3\mbox{ or }4, \\
     & e\mbox{ or }2e,&\quad &\mbox{ if } |V|=5.
     \end{aligned} \right.
    \end{aligned}\qquad (*)\right.
     \]
  then $\frac{n+1}{2}e\not\in\supp(\psi(S))$.
\end{prop}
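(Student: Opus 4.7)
The plan is to argue by contradiction: suppose some $h_0 \mid S$ satisfies $\psi(h_0) = \tfrac{n+1}{2}e$, and write $a = \phi(h_0) \in C_2^4 \setminus \{0\}$. We will locate inside $S$ a sub-configuration whose $\phi$-image matches the hypothesis of one of Lemmas \ref{L1}--\ref{L4}. When Lemma \ref{L1} applies, its conclusion forces $\supp(\psi(S))$ to equal $\{\tfrac{n+1}{4}e\}$ or $\{\tfrac{3n+1}{4}e\}$; since $n \ge 3$ is odd, the inverses $4^{-1}$ and $2^{-1}$ of $4$ and $2$ modulo $n$ are distinct, so neither value equals $\tfrac{n+1}{2}e$, contradicting $\psi(h_0) = \tfrac{n+1}{2}e$. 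When any of Lemmas \ref{L2}, \ref{L3}, \ref{L4} applies, the standing hypothesis $(*)$ of Proposition \ref{LL1} is contradicted outright.

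The first reduction is a pair count. For each $a' \in \supp(\phi(S))$, let $N_{a'}$ denote the number of disjoint pairs $\{u,v\} \subseteq \supp(\phi(S))\setminus\{a'\}$ with $\phi(u)+\phi(v) = a'$; note $N_{a'} \le 3$ since every element has at most one $+a'$-partner. If $N_{a'} = 3$ for some $a'$, label $h_1 \in S$ with $\phi(h_1) = a'$, the three pairs as $(h_2,h_3),(h_4,h_5),(h_6,h_7)$, and the remaining element as $h_8$; Lemma \ref{L2} then applies. Likewise, let $K \subseteq C_2^4$ denote the translation stabilizer of $\supp(\phi(S))$; if $K \ne \{0\}$, then any $c \in K \setminus \{0\}$ yields a partition of $\phi(S)$ into four $\{0,c\}$-cosets, giving the required four-pair structure, so Lemma \ref{L1} applies directly. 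We may therefore assume $K = \{0\}$ and $N_{a'} \le 2$ for every $a' \in \supp(\phi(S))$.

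Under these residual hypotheses the projection $\pi : C_2^4 \to C_2^4/\langle a\rangle \cong C_2^3$ sends $\supp(\phi(Sh_0^{-1}))$ onto $C_2^3 \setminus \{0\}$ with at most two doubled fibres. The plan is now to feed all zero-sum triples, quadruples and quintuples of $\phi(S)$ through $(*)$: for each complete pair $(u,v)$ with $\phi(u)+\phi(v) = a$, the triple $h_0 u v$ forces $\psi(u)+\psi(v) = \tfrac{n+1}{2}e$, while zero-sum triples and quadruples disjoint from $h_0$ force $\psi$-sums equal to $e$. Combining these relations is designed to locate a second element $h^* \mid S$ with $\psi(h^*) = \tfrac{n+1}{2}e$ together with a two-pair scaffold $\phi(h_1) = \phi(h_2)+\phi(h_3) = \phi(h_4)+\phi(h_5)$ inside $\phi(S)$; Lemma \ref{L3} then applies if $h^*$ sits inside one of the two pairs, and Lemma \ref{L4} applies if it sits outside.

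The main obstacle is this last step: verifying, uniformly across all $8$-element subsets $\supp(\phi(S)) \subseteq C_2^4 \setminus \{0\}$ with trivial stabilizer and $N_{a'} \le 2$ everywhere, that the linear relations extracted from $(*)$ really do produce both the second $\tfrac{n+1}{2}e$-element and the required two-pair scaffold. This amounts to a short case analysis on the fibre sizes of $\pi$ (i.e., on the value $p := N_a \in \{0,1,2\}$) and on how the at most two doubled fibres interact with the $(*)$-forced relations coming from the zero-sum quadruples and quintuples of $\phi(S)$, parallel in spirit to the case analyses appearing in the proofs of Lemmas \ref{L2}--\ref{L4}.
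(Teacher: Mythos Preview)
Your opening reductions are sound and match the paper's Cases~1 and~2: a nontrivial translation stabilizer of $\supp(\phi(S))$ is exactly the condition $\mathsf N_v(\phi(S))=4$ with $\delta_v=0$, feeding Lemma~\ref{L1}; and your $N_{a'}=3$ for some $a'\in\supp(\phi(S))$ is exactly $\mathsf N_{a'}=4$ with $\delta_{a'}=1$, feeding Lemma~\ref{L2}.

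The gap is the residual case, which you yourself flag as ``the main obstacle'' and then do not carry out. Two points make this more than a missing computation. First, Lemmas~\ref{L3} and~\ref{L4} each require \emph{two} elements with $\psi$-value $\tfrac{n+1}{2}e$, placed in specific positions relative to the scaffold $\phi(h_1)=\phi(h_2)+\phi(h_3)=\phi(h_4)+\phi(h_5)$; you assume only one such element $h_0$ and offer no mechanism for producing the second one from $(*)$ in general. Second, the paper's own handling of the residual case shows that Lemmas~\ref{L3}/\ref{L4} are not always enough: after the counting argument $\sum_v \mathsf N_v=36$ over $15$ values forces at least six $v$ with $\mathsf N_v=3$, the paper splits on whether three of these have $\delta_v=1$ (its Case~3, which does reduce to \ref{L3}/\ref{L4}) or three have $\delta_v=0$ (its Case~4). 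Case~4 is \emph{not} dispatched by \ref{L3} or \ref{L4}; it needs a separate structural argument (Claims~A and~B) pinning down $\psi(h_1\cdots h_7)$ completely and then analysing a length-$5$ zero-sum in $\phi$ directly. Your fibre picture under $\pi:C_2^4\to C_2^4/\langle a\rangle$ does not obviously reproduce this, and nothing in the proposal suggests how it would.

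So the proposal is a plan with the hard part missing. If you want to repair it along your lines, you would have to show that under ``$K=\{0\}$ and $N_{a'}\le 2$ everywhere'' one can always manufacture a second $\tfrac{n+1}{2}e$-element and position it correctly for \ref{L3} or \ref{L4}; the paper's Case~4 indicates this is at least nontrivial. The cleaner fix is to replace your fibre analysis by the paper's counting argument and its Case~3/Case~4 split.
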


\begin{proof}

For any $v\in \phi(G)\setminus \{0\}=H\setminus\{0\}$, we define
$$\mathsf N_v(\phi(S))=|\{T\t \phi(S) \ :\  |T|=2 \text{ and } \sigma(T)=v  \}|+\delta_v,$$
where
\[
\delta_v =\left \{ \begin{array}{ll}  1 ,\quad \mbox{ if } v\in \mathsf{supp}(\phi(S)); \\  0,\quad \mbox{ if } v\notin \mathsf{supp}(\phi(S)).
\end{array} \right.
\]

 We distinguish the following four cases.

\medskip\noindent{\bf Case 1.} There exists $v\in H\setminus\{0\}$
 such that $\mathsf N_v(\phi(S))=4$ and $\delta_v=0$.
 \smallskip

Without loss of generality, we can assume that $v=\phi(h_1+h_2)=\phi(h_3+h_4)=\phi(h_5+h_6)=\phi(h_7+h_8)$ where $S=h_1\cdot\ldots\cdot h_8$.  By Lemma \ref{L1}, we obtain that  $\frac{n+1}{2}e\not\in\supp(\psi(S))$.

\medskip\noindent{\bf Case 2.} There exists $v\in H\setminus \{0\}$
 such that $\mathsf N_v(\phi(S))=4$ and $\delta_v=1$.
 \smallskip

Without loss of generality, we can assume that $v=\phi(h_1)=\phi(h_2+h_3)=\phi(h_4+h_5)=\phi(h_6+h_7)$ where $S=h_1\cdot\ldots\cdot h_8$.  By Lemma \ref{L2}, we obtain that  $\frac{n+1}{2}e\not\in\supp(\psi(S))$.

\medskip
Now, we can assume that, for each $v\in H\setminus \{0\}$,
 $\mathsf N_v(\phi(S))\le 3$.
Since $\sum_{v\in H\setminus \{0\}}\mathsf N_v(\phi(S))=\frac{8\times 7}{2}+8=36$ and $|H\setminus \{0\}|=15$, by simple calculation, we obtain that $|\{v\in H\setminus \{0\}\mid \mathsf N_v(\phi(S))= 3\}|\ge 6$. We continue with further case distinctions.

\medskip\noindent{\bf Case 3.} There exist three distinct $v_1, v_2, v_3\in H\setminus \{0\}$
 such that $\mathsf N_{v_1}(\phi(S))=\mathsf N_{ v_2}(\phi(S))=\mathsf N_{ v_3}(\phi(S))=3$ and $\delta_{ v_1}=\delta_{ v_2}=\delta_{ v_3}=1$.
 \smallskip

Let $S=h_1\cdot\ldots\cdot h_8$. For each $i\in [1,3]$, we  denote by $A_i=\{ v_i\} \cup \supp(\phi(T_{i_1}T_{i_2}))$, where $|T_{i_1}|=|T_{i_2}|=2$, $T_{i_1}T_{i_2}\t S$ , and $ v_i=\sigma(\phi(T_{i_1}))=\sigma(\phi(T_{i_2}))$. Thus $|A_i|=5$ for each $i\in[1,3]$. By symmetry, we can distinguish the following two cases.

\medskip
\noindent{\bf Subcase 3.1. }There exists  $i\in [1,3]$, say $i=1$,  such that $ v_2\notin A_1$ and $ v_3\notin A_1$.
\smallskip

Then we can assume that $ v_1=\phi(h_1)=\phi(h_2+h_3)=\phi(h_4+h_5)$, $ v_2=\phi(h_6)$, and $v_3=\phi(h_7)$. It follows that $\psi(h_1)=\psi(h_6)=\psi(h_7)=\frac{n+1}{2}e$,  a contradiction to Lemma \ref{L4}.

\medskip
\noindent{\bf  Subcase 3.2.}For each $i\in [1,3]$, there exists $j\in [1,3]$ such that $j\neq i$ and $ v_j\in A_i$.
\smallskip

For $i=1$, we can assume that $ v_2\in A_1$. Then $ v_1\in A_2$.
For $i=3$, we obtain that $ v_1\in A_3$ or $ v_2\in A_3$ which implies that $ v_3\in A_1$ or $ v_3\in A_2$. By symmetry, we can assume that $ v_3\in A_1$ and hence $ v_2, v_3\in A_1$.

Without loss of generality, we can assume that  $ v_1=\phi(h_1)=\phi(h_2+h_3)=\phi(h_4+h_5)$, $ v_2=\phi(h_3)$, and $ v_3=\phi(h_5)$. It follows that $\psi(h_1)=\psi(h_3)=\psi(h_5)=\frac{n+1}{2}e$, a contradiction to Lemma \ref{L3}.

\medskip\noindent{\bf Case 4.} There exist three distinct $ v_1, v_2, v_3\in H\setminus \{0\}$
 such that $\mathsf N_{ v_1}(\phi(S))=\mathsf N_{ v_2}(\phi(S))=\mathsf N_{ v_3}(\phi(S))=3$ and $\delta_{ v_1}=\delta_{ v_2}=\delta_{ v_3}=0$.
 \smallskip

Let $S=h_1\cdot\ldots\cdot h_8$. For each $i\in [1,3]$, we  denote by $A_i=\supp(\phi(R_{i_1}R_{i_2}R_{i_3}))$, where $|R_{i_1}|=|R_{i_2}|=|R_{i_3}|=2$, $R_{i_1}R_{i_2}R_{i_3}\t S$ , and $ v_i=\sigma(\phi(R_{i_1}))=\sigma(\phi(R_{i_2}))=\sigma(\phi(R_{i_3}))$. Thus $|A_i|=6$ for each $i\in[1,3]$ and hence $|A_i\cap A_j|\ge 6+6-8= 4$ for distinct $i,j$ where $1\le i,j\le 3$.
We proceed by the following two claims

\begin{enumerate}
 \item[]\noindent{\bf Claim A. } For each $i\in [1,3]$ and each $k\in [1,3]$, $\sigma(\psi(R_{i_k}))=\frac{n+1}{2}e$.
\smallskip

\noindent
{\it Proof of \,{\bf Claim A}. }
For each $i\in [1,3]$, $\sigma(\phi(R_{i_1}R_{i_2}))=\sigma(\phi(R_{i_1}R_{i_3}))=\sigma(\phi(R_{i_2}R_{i_3}))=0$ implies that $\sigma(\psi(R_{i_1}R_{i_2}))=\sigma(\psi(R_{i_1}R_{i_3}))=\sigma(\psi(R_{i_2}R_{i_3}))=e$. Thus $\sigma(\psi(R_{i_k}))=\frac{n+1}{2}e$ for all $k\in [1,3]$.

\qedhere{(Proof of Claim A)}

\item[]
\noindent{\bf Claim B. } For each $j\in [2,3]$, there exist $1\le s<t\le 3$ such that $\supp(\phi(R_{1_s}R_{1_t}))\subseteq A_j$.
Furthermore, there exist distinct $1\le x,y\le 3$ such that $R_{1_s}R_{1_t}=R_{j_x}R_{j_y}$.

\smallskip

\noindent
{\it Proof of \,{\bf  Claim B}. }
Without loss of generality, we can assume that $j=2$. Let $R_{1_1}=g_1g_2, R_{1_2}=g_3g_4$, and $ R_{1_3}=g_5g_6$.

Since $|A_1\cap A_2|\ge  4$, by symmetry, we only need to consider two cases: $\supp(\phi(g_1g_2g_3g_4))\subseteq A_1\cap A_2$ and $\supp(\phi(g_1g_2g_3g_5))= A_1\cap A_2$.

Suppose that $\supp(\phi(g_1g_2g_3g_5)) =A_1\cap A_2$. Then there exists $x\in [1,3]$ such that $R_{2_x}\t g_1g_2g_3g_5$. By symmetry, there are only three cases: $R_{2_x}=g_1g_2$, $R_{2_x}=g_1g_3$, and $R_{2_x}=g_3g_5$. If $R_{2_x}=g_1g_2$, then $ v_1= v_2$, a contradiction.
If $R_{2_x}=g_1g_3$, then  $ v_2=\sigma(\phi(R_{2_x}))=\sigma(\phi(g_2g_4))$ which implies that $\phi(g_4)\in A_1\cap A_2$, a contradiction.
If $R_{2_x}=g_3g_5$, then  $ v_2=\sigma(\phi(R_{2_x}))=\sigma(\phi(g_4g_6))$ which implies that $\supp(\phi(g_4g_6))\subseteq A_1\cap A_2$, a contradiction.

Suppose that $\supp(\phi(g_1g_2g_3g_4))\subseteq A_1\cap A_2$. Then $\supp(\phi(R_{1_1}R_{1_2}))\subseteq A_2$. Furthermore, there must exist $x\in[1,3]$ such that $R_{2_x}\t R_{1_1}R_{1_2}$. Thus $\sigma(\phi(R_{2_x}))=\sigma(\phi(R_{1_1}R_{1_2}R_{2_x}^{-1}))$ which implies that $R_{1_1}R_{1_2}R_{2_x}^{-1}=R_{2_y}$ for some $y\in [1,3]\setminus \{x\}$.
\qed{(Proof of Claim B)}
\end{enumerate}
\medskip

Without loss of generality, we can assume that $S=h_1\cdot\ldots\cdot h_8$, $ v_1=\phi(h_1)+\phi(h_2)=\phi(h_3)+\phi(h_4)=\phi(h_5)+\phi(h_6)$.

If $|A_1\cap A_2|=|A_1\cap A_3|=4$, then $ v_2=\sigma(\phi(h_7h_8))= v_3$, a contradiction. Thus by symmetry and Claim B, we can assume that $|A_1\cap A_2|=5$ and $ v_2=\phi(h_1)+\phi(h_3)=\phi(h_2)+\phi(h_4)=\phi(h_5)+\phi(h_7)$ which  implies that   $\phi(h_1)+\phi(h_4)=\phi(h_2)+\phi(h_3)=\phi(h_6)+\phi(h_7)$
(See Figure \ref{f6}).
\smallskip

\begin{figure}[ht]
\begin{center}
\setlength{\unitlength}{1.0 mm}%
\begin{picture}(87.73,40.58)(0,0)
\put(7.64,30.22){\circle*{1.80}}
\put(7.36,10.35){\circle*{1.80}}
\put(25.69,30.36){\circle*{1.80}}
\put(25.55,10.35){\circle*{1.80}}
\put(57.31,30.64){\circle*{1.80}}
\put(81.49,30.42){\circle*{1.80}}
\put(67.80,10.07){\circle*{1.80}}
\put(5.00,35.47){\fontsize{8.53}{10.24}\selectfont \makebox(7.5, 3.0)[l]{$h_1$\strut}}
\put(5,1.76){\fontsize{8.53}{10.24}\selectfont \makebox(7.5, 3.0)[l]{$h_2$\strut}}
\put(24.20,35.47){\fontsize{8.53}{10.24}\selectfont \makebox(7.5, 3.0)[l]{$h_3$\strut}}
\put(24.20,1.76){\fontsize{8.53}{10.24}\selectfont \makebox(7.5, 3.0)[l]{$h_4$\strut}}
\put(45.21,1.76){\fontsize{8.53}{10.24}\selectfont \makebox(7.5, 3.0)[l]{$h_6$\strut}}
\put(68.30,1.76){\fontsize{8.53}{10.24}\selectfont \makebox(7.5, 3.0)[l]{$h_7$\strut}}
\put(78.23,34.20){\fontsize{8.53}{10.24}\selectfont \makebox(7.5, 3.0)[l]{$h_8$\strut}}
\put(46.95,10.21){\circle*{1.80}}
\put(52.64,34.77){\fontsize{8.53}{10.24}\selectfont \makebox(7.5, 3.0)[l]{$h_5$\strut}}
\lbezier(4.00,33.65)(4.00,6.36)\lbezier(4.00,6.36)(8.76,6.36)\lbezier(8.76,6.36)(8.76,33.65)\lbezier(8.76,33.65)(4.00,33.65)
\lbezier(24.22,33.65)(24.22,6.43)\lbezier(24.22,6.43)(28.98,6.43)\lbezier(28.98,6.43)(28.98,33.65)\lbezier(28.98,33.65)(24.22,33.65)
\lbezier(55.63,34.84)(43.62,9.52)\lbezier(43.62,9.52)(48.27,7.31)\lbezier(48.27,7.31)(60.28,32.63)\lbezier(60.28,32.63)(55.63,34.84)

\dashline[33]{0.50}(4.07,33.51)(4.07,29.10)\dashline[33]{0.50}(4.07,29.10)(28.98,29.10)\dashline[33]{0.50}(28.98,29.10)(28.98,33.51)\dashline[33]{0.50}(28.98,33.51)(4.07,33.51)
\dashline[33]{0.50}(3.93,11.82)(3.93,6.36)\dashline[33]{0.50}(3.93,6.36)(28.98,6.36)\dashline[33]{0.50}(28.98,6.36)(28.98,11.82)\dashline[33]{0.50}(28.98,11.82)(3.93,11.82)
\dashline[33]{0.50}(67.59,6.16)(54.39,31.49)\dashline[33]{0.50}(54.39,31.49)(58.25,33.50)\dashline[33]{0.50}(58.25,33.50)(71.44,8.16)\dashline[33]{0.50}(71.44,8.16)(67.59,6.16)

\thinlines
\dottedline{0.50}(43.81,11.82)(43.80,7.40)\dottedline{0.50}(43.80,7.40)(69.55,7.34)\dottedline{0.50}(69.55,7.34)(69.56,11.77)\dottedline{0.50}(69.56,11.77)(43.81,11.82)
\dottedline{0.50}(4.14,29.17)(24.22,6.57)\dottedline{0.50}(24.22,6.57)(29.12,11.68)\dottedline{0.50}(29.12,11.68)(8.76,33.72)\dottedline{0.50}(8.76,33.72)(4.14,29.17)
\dottedline{0.50}(24.15,33.72)(28.98,29.38)\dottedline{0.50}(28.98,29.38)(8.83,6.50)\dottedline{0.50}(8.83,6.50)(3.93,11.75)\dottedline{0.50}(3.93,11.75)(24.15,33.72)
\end{picture}%

\end{center}
\caption{}\label{f6}
\end{figure}

Then we have \begin{align*}
\psi(h_1\cdot\ldots\cdot h_7)&=(\frac{n+1}{4}e)^7 \quad \text{ if } n\equiv 3 \pmod 4\,,\\
\psi(h_1\cdot\ldots\cdot h_7)&=(\frac{3n+1}{4}e)^7 \quad \text{ if } n\equiv 1 \pmod 4\,.\\
\end{align*}
Assume to the contrary that $\frac{n+1}{2}e\in \mathsf{supp}(\psi(S))$. Then
 $\psi(h_8)=\frac{n+1}{2}e$.

Consider the sequence $W=h_1h_2h_5h_7h_8$. Since $\phi(W)\in \mathscr{F}(\phi(G))$ and $\mathsf D(\phi(G))=\mathsf D(C_2^4)=5$, there exists a subsequence $V\mid W$ such that $\sigma(\phi(V))=0$ and $|V|\in \{3, 4, 5\}$.  If $|V|=4$, by $\sigma(\psi(V))=e$ we obtain that $V=h_1h_2h_5h_7$ which implies that $\phi(h_6)=\phi(h_7)$, a contradiction. If $|V|=5$,  then  $\sigma(\psi(V))=\frac{n+3}{2}e\notin \{e, 2e\}$, a contradiction.

Therefore $|V|=3$. By $\sigma(\psi(V))=1$, we obtain that $h_8\t V$. Since $h_1h_2\nmid V $ and $h_5h_7\nmid V$,  by symmetry, we only need to consider $V=h_1h_5h_8$. Then $\sigma(\phi(h_2h_3h_4h_5h_8))=0$ and $\sigma(\psi(h_2h_3h_4h_5h_8))\notin \{e, 2e\}$, a contradiction.

\end{proof}

\begin{cor}\label{xx1} Let $G,H,K$ and $\phi,\psi$ be as above.
Let $ K=\left<e\right>$ and $S$ be a sequence over $G\setminus\{0\}$.
If  $\phi(S)$  is a squarefree sequence  with $0\not\in \supp(\phi(S))$ and satisfies the following property $(*)$:
 \[
     \left\{\begin{aligned} & \text{For any subsequence $V$ of $S$ with $\sigma(\phi(V))=0$, \text{ we have that }}\\
      &\sigma(\psi(V)) =\left \{ \begin{aligned}&  e ,&\quad \quad \quad& \mbox{ if } |V|=3\mbox{ or }4, \\
     & e\mbox{ or }2e,&\quad &\mbox{ if } |V|=5.
     \end{aligned} \right.
    \end{aligned}\qquad (*)\right.
     \]
  then $|S|\le 8$.
\end{cor}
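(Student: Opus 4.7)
The plan is by contradiction: assume $|S|\ge 9$.

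If $|S|\ge 10$, then any $10$-element subsequence $S''\mid S$ has $\phi(S'')$ squarefree over $H\setminus\{0\}$, so Lemma~\ref{IMP} produces two distinct subsequences $T_1,T_2\mid S''$ with $\{|T_1|,|T_2|\}\subseteq[3,4]$, $\sigma(\phi(T_1))=\sigma(\phi(T_2))=0$ and $\sigma(\psi(T_1))\neq\sigma(\psi(T_2))$. But $T_1,T_2$ are also subsequences of $S$, and property $(*)$ forces both $\psi$-sums to equal $e$, a contradiction.

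It remains to treat $|S|=9$. First, every $8$-element subsequence $S'\mid S$ inherits the hypotheses of Proposition~\ref{LL1} (squarefreeness of $\phi(S')$, $0\notin\supp(\phi(S'))$, and property~$(*)$), so $\tfrac{n+1}{2}e\notin\supp(\psi(S'))$; ranging over all $8$-subsequences yields $\tfrac{n+1}{2}e\notin\supp(\psi(S))$. Next, a pigeonhole count supplies pair structure in $\phi(S)$. For each $v\in H\setminus\{0\}$, let $P_v$ be the number of unordered pairs $\{a,b\}\subseteq\supp(\phi(S))$ with $a+b=v$. Two such pairs that share a common element must coincide, so $P_v\le\lfloor 9/2\rfloor=4$ for every $v$. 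Summing, $\sum_{v\neq 0}P_v=\binom{9}{2}=36$; the $6$ values of $v$ outside $\supp(\phi(S))$ contribute at most $6\cdot 4=24$, forcing $\sum_{v\in\supp(\phi(S))}P_v\ge 12$. With only $9$ such $v$, some $v^*\in\supp(\phi(S))$ satisfies $P_{v^*}\ge 2$.

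To finish, I would mimic the key step of Lemma~\ref{IMP}. Let $u\in S$ be the unique element with $\phi(u)=v^*$ (unique by squarefreeness of $\phi(S)$), and pick two disjoint pairs $R_1,R_2\mid Su^{-1}$ with $\sigma(\phi(R_1))=\sigma(\phi(R_2))=\phi(u)$. Then $R_1u$ and $R_2u$ are $\phi$-zero-sums of length $3$, and $R_1R_2$ is a $\phi$-zero-sum of length $4$, so property~$(*)$ gives $\sigma(\psi(R_1u))=\sigma(\psi(R_2u))=\sigma(\psi(R_1R_2))=e$. Subtracting, $\sigma(\psi(R_1))=\sigma(\psi(R_2))=e-\psi(u)$, and hence $2(e-\psi(u))=\sigma(\psi(R_1R_2))=e$. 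Since $n$ is odd, this forces $\psi(u)=\tfrac{n+1}{2}e$, contradicting the previous step.

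The main obstacle is the delicate handling of $|S|=9$: Lemma~\ref{IMP} itself is stated only for length $10$, so one must combine Proposition~\ref{LL1} (which removes $\tfrac{n+1}{2}e$ from the $\psi$-support of $S$) with the pigeonhole bound $P_{v^*}\ge 2$, which is precisely what lets the Lemma~\ref{IMP}-style argument close with only nine elements.
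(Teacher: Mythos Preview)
Your argument is correct. The case $|S|\ge 10$ via Lemma~\ref{IMP} is fine (the paper simply reduces to $|S|=9$ without loss of generality, which is equivalent). For $|S|=9$, your route genuinely differs from the paper's: the paper first disposes of the degenerate case $\supp(\psi(S))=\{0\}$, then picks any $w\mid S$ with $\psi(w)\neq 0$, applies Lemma~\ref{SUM} to get one pair $g_1g_2\mid Sw^{-1}$ with $\phi(g_1)+\phi(g_2)=\phi(w)$, and then splits into cases according to whether a second disjoint such pair exists; the ``no second pair'' branch requires an auxiliary construction (adjoining $w+g_3$ and reapplying Lemma~\ref{SUM}) and a further subcase analysis. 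Your pigeonhole count on $\sum_v P_v=\binom{9}{2}=36$ against the six values outside $\supp(\phi(S))$ guarantees \emph{a priori} some $v^*\in\supp(\phi(S))$ with $P_{v^*}\ge 2$, so the element $u$ with $\phi(u)=v^*$ automatically admits two disjoint pairs in $Su^{-1}$ and the case split disappears entirely. Both proofs hinge on Proposition~\ref{LL1} to exclude $\tfrac{n+1}{2}e$ from $\supp(\psi(S))$, but your choice of the distinguished element by a $\phi$-side condition (rather than the paper's $\psi$-side condition $\psi(w)\neq 0$) is what buys the streamlining.
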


\begin{proof}
Assume to the contrary that $|S|\ge 9$. Without loss of generality, we can assume that $|S|=9$.

If $\supp(\psi(S))=\{0\}$, then $S\in \mathscr F(\ker(\psi))$. By $\mathsf D(\ker(\psi))=\mathsf D(C_2^4)=5$, $S$ has a  subsequence $V$ of length $|V|\in [3,5]$ such that $\sigma(\phi(V))=0$ and $\sigma(\psi(V))=0$, a contradiction.

 Thus we can choose $w\t S$ such that $\psi(w)\neq0$. By Lemma \ref{SUM}, there exist distinct  $g_1,g_2\in \supp(Sw^{-1})$ such that $\phi(w)=\phi(g_1)+\phi(g_2)$.
If there exist another $g_3,g_4\in \supp(S(wg_1g_2)^{-1})$ such that $\phi(w)=\phi(g_3)+\phi(g_4)$, then $\sigma(\phi(wg_1g_2))=\sigma(\phi(wg_3g_4))=\sigma(\phi(g_1g_2g_3g_4))=0$ which implies that $\sigma(\psi(wg_1g_2))=\sigma(\psi(wg_3g_4))=\sigma(\psi(g_1g_2g_3g_4))=e$. Therefore $\psi(w)=\frac{n+1}{2}e$, a contradiction to Proposition \ref{LL1}.

Otherwise, choose $g_3\in \supp(S(wg_1g_2)^{-1})$. Then $\phi(S(w+g_3)w^{-1})$ is a squarefree sequence of length $9$. By Lemma \ref{SUM},  there exist distinct  $g_i,g_j\in \supp(Sw^{-1})$ such that $\phi(w+g_3)=\phi(g_i)+\phi(g_j)$. Clearly, $g_ig_j\t S(wg_1g_2g_3)^{-1}$ or $|\{g_1,g_2\}\cap\{g_i,g_j\}|=1$.

Suppose that $g_ig_j\t S(wg_1g_2g_3)^{-1}$. Then $\sigma(\phi(wg_1g_2))=\sigma(\phi(wg_3g_ig_j))=\sigma(\phi(g_1g_2g_3g_ig_j))=0$ which implies that $\sigma(\psi(wg_1g_2))=\sigma(\psi(wg_3g_ig_j))=e$ and $\sigma(\psi(g_1g_2g_3g_ig_j))\in \{e,2e\}$. Therefore $\psi(w)\in\{\frac{n+1}{2}e, 0\}$. It follows by the choice of $w$  that $\psi(w)=\frac{n+1}{2}e$, a contradiction to Proposition \ref{LL1}.

Supppose that  $|\{g_1,g_2\}\cap\{g_i,g_j\}|=1$. By symmetry, we can assume that $g_i=g_1$ and $g_j\in \supp(S(wg_1g_2g_3)^{-1})$. Then $\sigma(\phi(wg_1g_2))=\sigma(\phi(wg_1g_3g_j))=\sigma(\phi(g_1g_2g_1g_3g_j))=\sigma(\phi(g_2g_3g_j))=0$ which implies that $\sigma(\psi(wg_1g_2))=\sigma(\psi(wg_1g_3g_j))=\sigma(\psi(g_2g_3g_j))=e$. Therefore $\psi(g_2)=\frac{n+1}{2}e$, a contradiction to Proposition \ref{LL1}.
\end{proof}

\section{The proof of Theorem \ref{Th1}.2}
\begin{prop}\label{RE1} $\eta(C_2^3\oplus C_{2n})=2n+6$, where $n\geq  3$ is an odd integer.
\end{prop}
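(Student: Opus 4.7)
The lower bound $\eta(C_2^3 \oplus C_{2n}) \ge 2n+6$ is immediate from Lemma~\ref{ETA}. For the upper bound, let $S$ be any sequence over $G = C_2^3 \oplus C_{2n}$ of length $2n+6$; since $n$ is odd, write $G = H \oplus K$ with $H \cong C_2^4$ and $K \cong C_n$, and let $\phi,\psi$ be the projections onto $H$ and $K$. Assume for contradiction that $S$ has no nonempty zero-sum subsequence of length $\le 2n$. Following the strategy in the proofs of Theorems~\ref{Th2} and~\ref{Th1}.1, decompose $S = S_1 \cdots S_k \cdot S_0$ where each $S_i$ ($i \ge 1$) has $|S_i| = 2$ and $\sigma(\phi(S_i)) = 0$, and $\phi(S_0)$ is squarefree. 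Since $\eta(C_n) = n$, the sequence $\sigma(S_1) \cdots \sigma(S_k)$ in $K$ must be zero-sum free (otherwise some sub-sum of $\le n$ terms equals $0$, giving a short zero-sum in $S$), so $k \le n-1$ and $|S_0| = 2n+6-2k \ge 8$; parity of $|S|$ forces $|S_0|$ even.

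Suppose first $|S_0| \ge 10$, so $k \le n-2$. By Lemma~\ref{cyclic}.2, the sub-sums of $(\sigma(S_i))$ form an arithmetic progression $\{0, g, 2g, \ldots, cg\}$ for a generator $g \in K$ and some $c \in [k,n-1]$. If $c = n-1$, any $V \subseteq S_0$ with $\sigma(\phi(V)) = 0$ and $|V| \in \{3,4\}$ (such a $V$ exists by Lemma~\ref{SUM} since $|S_0| \ge 9$) combines with a suitable $\prod_J S_j$ of $|J| \le (2n-|V|)/2$ pairs to form a short zero-sum in $S$. Otherwise $c \le n-2$ forces $k = c = n-2$ and all summands $t_i = 1$, so $|S_0| = 10$; Lemma~\ref{IMP} applied to $S_0$ produces $T_1, T_2$ of length in $\{3,4\}$ with $\sigma(\phi(T_i)) = 0$ and distinct $\sigma(T_i)$, but the no-short-zero-sum condition forces $\sigma(T_1) = \sigma(T_2) = g$, contradicting Lemma~\ref{IMP}. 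Hence $|S_0| = 8$ and $k = n-1$, and Lemma~\ref{cyclic}.1 gives $\sigma(S_i) = e$ for all $i$ and some generator $e$ of $K$.

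Next, $0 \notin \supp(\phi(S_0))$ (if $g \in K$ lay in $S_0$, the length-$n$ sequence $(\sigma(S_1), \ldots, \sigma(S_{n-1}), g)$ would have a zero sub-sum of length $\le n$ by $\eta(C_n)=n$, yielding a short zero-sum in $S$). Direct analysis of combining each $V \subseteq S_0$ with $j \le n-2$ pairs verifies property $(*)$ of Corollary~\ref{xx1} for $S_0$, so Proposition~\ref{LL1} applies and gives $\frac{n+1}{2}e \notin \supp(\psi(S_0))$. Now a swap argument eliminates pairs $S_i = g_1 g_2$ with $\phi(g_1) = \phi(g_2) = h \in \supp(\phi(S_0))$: interchanging $g_1$ with the unique $g_0 \in S_0$ satisfying $\phi(g_0) = h$ yields a new decomposition, and demanding the updated pair-sum sequence remain zero-sum free forces $\psi(g_0) = \psi(g_1)$; by symmetry $\psi(g_0) = \psi(g_2)$, hence $e = \psi(g_1)+\psi(g_2) = 2\psi(g_0)$, so $\psi(g_0) = \frac{n+1}{2}e$, contradicting Proposition~\ref{LL1}.

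Therefore every pair has $\phi$-value in $H \setminus \supp(\phi(S_0))$. If every pair has $\phi = 0$, then $2(n-1) \ge n$ pair elements lie in $K$ and $\eta(C_n) = n$ supplies a short zero-sum. Otherwise some pair $S_j = g_1 g_2$ has $\phi(g_1) = h \ne 0$ outside $\supp(\phi(S_0))$; then $S_0 \cdot g_1$ is squarefree over $H \setminus \{0\}$ of length $9$. Re-verifying property $(*)$ for $S_0 \cdot g_1$ (in the case $V \ni g_1$, only the $n-2$ pairs other than $S_j$ may be combined with $V$, and the length constraint $|V| + 2|J| \le 2n$ keeps $\sigma(V) = e$ as the unique option) and applying Corollary~\ref{xx1} gives the final contradiction $|S_0 \cdot g_1| \le 8$. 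The main obstacle throughout is the careful verification of property $(*)$ in the various augmentations: one must ensure that the set of attainable sub-sums of the $\sigma(S_i)$'s with bounded index-set size excludes exactly the element $-e$ of $K$, so that $\sigma(V) = e$ is forced; this is a delicate consequence of the arithmetic-progression structure supplied by Lemma~\ref{cyclic}.2 together with the length-parity constraints on short zero-sums.
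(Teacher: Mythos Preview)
Your treatment of the case $|S_0|=8$ (equivalently $k=n-1$) is correct and matches the paper's Case~1 closely: you verify property~$(*)$ for $S_0$, invoke Proposition~\ref{LL1} and Corollary~\ref{xx1}, and the swap argument is the same as the paper's.

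However, your handling of the case $|S_0|\ge 10$ has a genuine gap. The assertion ``Otherwise $c\le n-2$ forces $k=c=n-2$ and all summands $t_i=1$, so $|S_0|=10$'' is not justified. Lemma~\ref{cyclic}.2 gives only $c\ge k$, and the hypothesis $|S_0|\ge 10$ gives only $k\le n-2$; nothing in your argument excludes $k\in\{n-3,n-4,n-5\}$ (that is, $|S_0|\in\{12,14,16\}$). For such $k$, even when Lemma~\ref{cyclic}.2 applies (which it need not for small odd $n$, since the hypothesis is length $>n/2$), the inequality $c\le n-2$ leaves several admissible values for $\sigma(\psi(T))$ when $T\subseteq S_0$ has $\sigma(\phi(T))=0$, and Lemma~\ref{IMP} no longer yields a contradiction. (There is also the minor point that Lemma~\ref{IMP} requires $0\notin\supp(\phi(S_0))$, which you have not established in this case.)

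The paper handles the missing cases (its $r=n-3$ and $r=n-4$) by a device your argument lacks: since $|S_0|$ is large, Lemma~\ref{SUM} produces \emph{several disjoint} subsequences $T_1,T_2,\ldots$ of $S_0$ of length~$3$ with $\sigma(\phi(T_i))=0$, and one appends $\sigma(T_1),\sigma(T_2)$ to $\sigma(S_1)\cdots\sigma(S_r)$ to obtain a zero-sum free sequence in $K$ of length $n-1$ (or $n-2$), to which Lemma~\ref{cyclic}.1 (or~\ref{cyclic}.2) now legitimately applies. Only after this extension can one pin down $\sigma(\psi(V))$ for all short $\phi$-zero-sum $V\subseteq S_0$ (the paper's Assertion~$B$) and finish with a case analysis on $\supp(\psi(S_0))$. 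This extension step is the key idea you are missing for $|S_0|\ge 12$.
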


\begin{proof} Let $G=H\oplus K$ be a finite abelian group, where $H\cong C_2^4$  and $K\cong C_n$ with $n\geq 3$ an odd integer. Denote $\phi$ to be the projection from  $G$ to  $H$ and $\psi$ to be the projection from $ G$ to  $K$.

  In order to prove that $\eta(G)=2n+6$, by Lemma \ref{ETA} we only need to prove that $\eta(G)\le 2n+6$.
 Assume to the contrary that there exists a sequence  $S$ of length $2n+6$ over $G$  containing no short zero-sum subsequence.

Since $|\phi(S)|=|S|=2n+6=2(n-5)+16$, $\eta(C_2^4)=16$, and $\mathsf D(C_n)=n$,
we obtain that $S$ allows a product decomposition as
$$S=S_1\cdot \ldots \cdot S_{r}\cdot S_0,$$
where $S_1, \ldots, S_{r}, S_0$ are sequences over $G$ and, for every $i\in [1, r]$, $\phi(S_i)$ has sum zero and length $|S_i|\le 2$. Therefore $\phi(S_0)$ is squarefree over $H\setminus\{0\}$ and $n-4\leq r\leq n-1$.
We distinguish the following four cases depending on $r$ to get contradictions.

\medskip

\noindent\textbf{Case 1.} $r=n-1$. Then $|S_0|\ge 8$.
\smallskip

We proceed by the following  assertion first

\begin{itemize}
\item[]
\noindent
{\bf Assertion $A$. }There exists an element $e\in \ker(\phi)=K$ such that $\sigma(S_1)\cdot\ldots\cdot \sigma(S_{n-1})=e^{n-1}$.
  Furthermore, for any element $h\t SS_0^{-1}$, the sequence $S_0h$ has the following property:
 \[
     \left\{\begin{aligned} & \text{For any subsequence $V$ of $S_0h$ with $\sigma(\phi(V))=0$, \text{ we have that }}\\
      &\sigma(\psi(V)) =\left \{ \begin{aligned}&  e ,&\quad \quad \quad& \mbox{ if } |V|=3\mbox{ or }4, \\
     & e\mbox{ or }2e,&\quad &\mbox{ if } |V|=5.
     \end{aligned} \right.
    \end{aligned}\right.
     \]

\medskip
\noindent
{\it Proof of \,{\bf  Assertion A}. } By our assumption and  Lemma \ref{cyclic}.1, $\sigma(S_1)\cdot \ldots\cdot \sigma(S_{n-1})$ is zero-sum free over $K$. Then there exists an element $e\in K\setminus \{0\} $ such that  $\sigma(S_1)=\cdots =\sigma(S_{n-1})=e$.

Without loss of generality, we can assume that $h\t S_{n-1}$.
If $\sigma(\psi(V))=0$, then $V$ is a  short zero-sum subsequence of $S$, a contradiction. Thus $\sigma(\psi(V))\neq0$.

If $\sigma(\psi(V))=ke$ with $k\in [2,n-1]$ and $|V|\in \{3,4\}$, then
$S_1\cdot \ldots \cdot S_{n-k}\cdot V$ is a short zero-sum subsequence of $S$ , a contradiction.

If $\sigma(\psi(V))=ke$ with $k\in [3,n-1]$ and $|V|=5$, then
$S_1\cdot \ldots \cdot S_{n-k}\cdot V$ is a short zero-sum subsequence of $S$ , a contradiction.
\qed{(Proof of Assertion A)}
\end{itemize}
\medskip

If $|S_0|> 8$, we obtain a contradiction to Corollary \ref{xx1}. Thus we can assume that $|S_0|=8$ and hence $|S_i|=2$ for each $i\in [1,n-1]$.

If $\supp(\phi(S))\nsubseteq \supp(\phi(S_0))$, there exists $h\t SS_0^{-1}$,  such that $\phi(S_0h)$ is squarefree. By Corollary \ref{xx1}, $|S_0h|\le 8$, a contradiction.
Thus $\supp(\phi(S))\subseteq \supp(\phi(S_0))$. Without loss of generality, we can assume that $S_{n-1}=h_1h_2$, $h_3\t S_0$ and $\phi(h_1)=\phi(h_2)=\phi(h_3)$. If there exist distinct $1\le i,j\le 3$ such that $\psi(h_i+h_j)\neq e$, then $S_1\cdot\ldots\cdot S_{n-2}\cdot h_ih_j$ has a short zero-sum subsequence, a contradiction. Therefore
$\psi(h_1+h_2)=\psi(h_1+h_3)=\psi(h_2+h_3)=e$ which implies that $\psi(h_1)=\psi(h_2)=\psi(h_3)=\frac{n+1}{2}e$ and hence $\frac{n+1}{2}e\in \supp(\phi(S_0))$, a contradiction to Proposition \ref{LL1}.

\medskip

\noindent\textbf{Case 2.} $r=n-2$. Then $|S_0|\ge 10$.
\smallskip

 By Lemma \ref{IMP},  $S_0$ has a subsequence  $T$ of length $|T|\in \{3,4\}$  such that $\sigma(\phi(T))=0$ and $\sigma(\psi(T))\neq \sigma(\psi(S_1))$. By our assumption,  the sequences $\sigma(S_1)\cdot \ldots\cdot \sigma(S_{n-2})\sigma(T)$ is zero-sum free over $\ker(\phi)=K$.  By Lemma \ref{cyclic}.1, we obtain that
$$\sigma(S_1)=\cdots=\sigma(S_{n-2})=\sigma(T),$$
 a contradiction.

\medskip

\noindent\textbf{Case 3.} $r=n-3$. Then $|S_0|\ge 12$.
\smallskip

If $n=3$, then by Lemma \ref{SHO}, $S$ contains a short zero-sum subsequence, a contradiction.  We can assume that $n\ge 5$.

\smallskip
By Lemma \ref{SUM}, there exist disjoint $T_1, T_2\mid S_0$ such that $\sigma(\phi(T_1))=\sigma(\phi(T_2))=0$ and $|T_1|=|T_2|=3$. By our assumption, the sequence $\sigma(S_1)\cdot \ldots \cdot \sigma(S_{n-3})\cdot \sigma(T_1)\cdot \sigma(T_2)$ contains no zero-sum subsequence over $\ker(\phi)=K\cong C_n$, therefore by Lemma \ref{cyclic}.1,
$$\sigma(S_1)=\cdots=\sigma(S_{n-3})=\sigma(T_1)=\sigma(T_2)=e,$$ for some $e\in \ker(\phi)=K$ of order $n$.

\begin{itemize}
\item[]
\noindent\textbf{Assertion $B$. } Let $V$ be a subsequence of $S_0$ with $\sigma(\phi(V))=0$. Then
\[
\sigma(\psi(V)) =\left \{ \begin{aligned}
  & e ,&& \mbox{ if } |V|=3, \\
  & e\mbox{ or }2e,&\quad &\mbox{ if } |V|=4\mbox{ or }5.
\end{aligned} \right.
\]

\noindent
{\it Proof of \,{\bf  Assertion B}. } If $|V|=3$, then $|S_0V^{-1}|=12-3=9$.
By Lemma \ref{SUM}, there exists $V_1\mid S_0V^{-1}$ such that $\sigma(\phi(V_1))=0$ and $|V_1|=3$.
By our assumption, the sequence $\sigma(S_1)\cdot \ldots \cdot \sigma(S_{n-3})\cdot \sigma(V)\cdot \sigma(V_1)$ contains no zero-sum subsequence over $K$. Therefore by Lemma \ref{cyclic}.1,
$$\sigma(S_1)=\cdots=\sigma(S_{n-3})=\sigma(V)=\sigma(V_1)=e.$$

If $|V|=4$ or $5$, by our assumption,  $\sigma(S_1)\cdot \ldots\cdot \sigma(S_{n-3})\sigma(V)$ is zero-sum free over $K$. Since $\sigma(S_1)=\cdots=\sigma(S_{n-3})=e$, we obtain that   $\Sigma(\sigma(S_1)\cdot \ldots\cdot\sigma(S_{n-3}))=\{e,\ldots,(n-3)e\}$.  It follows that  $\sigma(\psi(V))\in \{e, 2e\}$.
\qed{(Proof of Assertion B)}
\end{itemize}

Suppose that $\mathsf{supp}(\psi(S_0))\setminus \{0, \frac{n+1}{2}e\}\neq \emptyset$.
Choose $u\mid S_0$ such that $\psi(u)\notin \{0, \frac{n+1}{2}e\}$. By Lemma \ref{SUM}, there exists a set $\{u_1, u_2, u_3, u_4\}\subseteq \mathsf{supp}(S_0u^{-1})$ such that $\sigma(\phi(uu_1u_2))=\sigma(\phi(uu_3u_4))=\sigma(\phi(u_1u_2u_3u_4))=0$. Then by Assertion $B$, we deduce that $\sigma(\psi(uu_1u_2))=\sigma(\psi(uu_3u_4))=e$ and  $\sigma(\psi(u_1u_2u_3u_4))\in \{e, 2e\}$. Therefore  $\psi(u_1+u_2)=\psi(u_3+u_4)\in \{e, \frac{n+1}{2}e\}$ and hence  $\psi(u)\in \{0, \frac{n+1}{2}e\}$, a contradiction.

\smallskip

Suppose that $\mathsf{supp}(\psi(S_0))\subseteq \{0, \frac{n+1}{2}e\}$.
If there exists $v\mid S_0$ such that $\psi(v)=\frac{n+1}{2}e$, by Lemma \ref{SUM}, there exists a set  $\{ v_1, \ldots,  v_8\}\subseteq \mathsf{supp}(S_0v^{-1})$ such that
$$\sigma(\phi(v v_1 v_2))=\sigma(\phi(v v_3 v_4))=\sigma(\phi(v v_5 v_6))=\sigma(\phi(v v_7 v_8))=0.$$
Thus
$$\sigma(\psi(vv_1v_2))=\sigma(\psi(v v_3 v_4))=\sigma(\psi(v v_5 v_6))=\sigma(\psi(vv_7 v_8))=e$$
and
$$\psi( v_1+ v_2)=\psi( v_3+ v_4)=\psi( v_5+v_6)=\psi( v_7+ v_8)=\frac{n+1}{2}e.$$
Since $\mathsf{supp}(\psi(S_0))\subseteq \{0, \frac{n+1}{2}e\}$, we have $\psi( v_1\cdot \ldots \cdot  v_8)=0^4(\frac{n+1}{2}e)^4$ which implies that $0^4\t \psi(S_0)$. Then we can always assume that $0^4\t \psi(S_0)$.

Choose $R\mid S_0$ such that $0^4\t\psi(R)$ and $|R|=5$.
By $\mathsf D(C_2^4)=5$, there exists $R_1\t R$ such that $\sigma(\phi(R_1))=0$. By our assumption, $\sigma(\psi(R_1))\neq 0$. It follows that $\sigma(\psi(R_1))=\frac{n+1}{2}e\notin \{e, 2e\}$ by $n\ge 5$, a contradiction.

\medskip
\noindent\textbf{Case 4.} $r=n-4$. Then $|S_0|\ge14$ and $n\ge 5$.
\smallskip

By Lemma \ref{SUM}, there exists a subsequence $T_1$ of $S_0$ such that $\sigma(\phi(T_1))=0$ and $|T_1|=3$. Since $|S_0T_1^{-1}|=11$, there exists a subsequence $T_2$ of $S_0T_1^{-1}$ such that   $\sigma(\phi(T_2))=0$, $|T_2|\in \{3,4\}$, and $\sigma(\psi(T_2))\neq \sigma(\psi(T_1))$ by Lemma \ref{IMP}. By our assumption, the sequence $\sigma(S_1)\cdot \ldots\cdot \sigma(S_{n-4})\sigma(T_1)\sigma(T_2)$ contains no zero-sum subsequence. Therefore by Lemma \ref{cyclic}.2, there exists an element $e\in K$ such  that
$$\sigma(S_1)\cdot\ldots\cdot\sigma(S_{n-4})\cdot\sigma(T_1)\cdot\sigma(T_2)=e^{n-3}(2e),$$
which implies that  $\sigma(S_1)=\ldots=\sigma(S_{n-4})=e$.

Again by Lemma \ref{IMP}, there exists a subsequence $T_3$ of $S_0$ such that $\sigma(\phi(T_3))=0$, $|T_3|\in \{3,4\}$, and $\sigma(\psi(T_3))\neq e$. Therefore $\sigma(\psi(T_3))=2e$ or $3e$.

Suppose that $\sigma(\psi(T_3))=2e$.
Since $|S_0T_3^{-1}|\ge 10$, there exists a subsequence $T_4$ of $S_0T_3^{-1}$ such that  $\sigma(\phi(T_4))=0$, $|T_4|\in \{3,4\}$, and $\sigma(\psi(T_4))=te$ with $t\in[2,n]$. If $t\ge 4$, then  $S_1\cdot \ldots\cdot S_{n-t}\cdot T_4$ is a  short zero-sum subsequence of $S$, a contradiction. Otherwise $2\le t\le 3$.  Then  $S_1\cdot \ldots\cdot S_{n-t-2}\cdot T_3\cdot T_4$ is a  short zero-sum subsequence of $S$, a contradiction.

Suppose that $\sigma(\psi(T_3))=3e$.
Since $|S_0T_3^{-1}|\ge 10$, there exists a subsequence $T_4$ of $S_0T_3^{-1}$ such that  $\sigma(\phi(T_4))=0$, $|T_4|\in \{3,4\}$, and $\sigma(\psi(T_4))=te$ with $t\in[1,n]\setminus\{3\}$. If $t\ge 4$, then  $S_1\cdot \ldots\cdot S_{n-t}\cdot T_4$ is a  short zero-sum subsequence of $S$, a contradiction. Otherwise $1\le t\le 2$. Then  $S_1\cdot \ldots\cdot S_{n-3-t}\cdot T_3\cdot T_4$ is a  short zero-sum subsequence of $S$, a contradiction.
\end{proof}

\begin{lemma}\label{EIMP1} Let $(e_1,e_2,e_3,e)$ be a basis of $G=C_2^3\oplus C_{2n}$ with $\ord(e_1)=\ord(e_2)=\ord(e_3)=2$ and $\ord(e)=2n$, where $n\geq 2$ is an even integer. Suppose that  $\theta:G\rightarrow G$ is the homomorphism defined by $\theta(e_1)=e_1$, $\theta(e_2)=e_2$, $\theta(e_3)=e_3$, $\theta(e)=ne$ and $\zeta:G\rightarrow G$ is the homomorphism defined by $\zeta(e_1)=\zeta(e_2)=\zeta(e_3)=0$, $\zeta(e)=e$.

 If  $S$ is a sequence of length $|S|=8$ over $G$ such that $\theta(S)$  is a squarefree sequence with $0\notin \mathsf{supp}(\theta(S))$,  then for any $k\in [1,n-1]$ and $\gcd(k,n)=1$, there exists a subsequence $T$ of $S$ with length $|T|\in [3,4]$ such that $\sigma(T)\in \ker(\theta)$ and $\sigma(T)\neq
  2ke$.
\end{lemma}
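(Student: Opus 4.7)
The plan is to argue by contradiction. Suppose that every subsequence $T$ of $S$ with $|T|\in[3,4]$ and $\sigma(\theta(T))=0$ satisfies $\sigma(T)=2ke$. Writing $g_i=x_i+a_ie\in S$ with $x_i\in\langle e_1,e_2,e_3\rangle$ and $a_i\in[0,2n-1]$, the condition $\sigma(\theta(T))=0$ is equivalent to $\sum_{i\in T}x_i=0$ in $C_2^3$ together with $\sum_{i\in T}a_i$ even, in which case $\sigma(T)=\bigl(\sum_{i\in T}a_i\bigr)e$. The hypothesis thus reads $\sum_{i\in T}a_i\equiv 2k\pmod{2n}$ for every such $T$; also, $\gcd(k,n)=1$ with $n$ even forces $k$ to be odd. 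I define $f\colon\supp(\theta(S))\to\mathbb Z/(2n)$ by $f(\theta(g_i))=a_i$, which is well defined because $\theta(S)$ is squarefree. The hypothesis then reads: for every subset $R\subseteq A:=\supp(\theta(S))$ with $|R|\in\{3,4\}$ and $\sum_{z\in R}z=0$ in $\theta(G)\cong C_2^4$, $\sum_{z\in R}f(z)\equiv 2k\pmod{2n}$. I split on whether the $8$-element set $A\subseteq C_2^4\setminus\{0\}$ is sum-free, that is, contains no $3$-subset summing to zero.

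In the first case ($A$ sum-free), $A$ must equal $v+H$ for some hyperplane $H\le\theta(G)$ and $v\notin H$, and the $14$ zero-sum $4$-subsets of $A$ correspond via $v+h\mapsto h$ to the $14$ two-dimensional affine subspaces of $H\cong\mathbb F_2^3$. Setting $F(h)=f(v+h)$ and comparing equations arising from two affine subspaces sharing a common $2$-element set $\{h_1,h_2\}$ yields that $F(h)+F(h+c)$ is independent of $h\in H$ for each $c=h_1+h_2\in H\setminus\{0\}$; call this common value $D(c)$. A short manipulation (substitute $F(c)=D(c)-F(0)$ for $c\neq 0$ into the relation, swap $h\leftrightarrow c$, and subtract) forces $D(h)\equiv D(c)\pmod n$ for all $h,c\in H\setminus\{0\}$. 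Hence $F(h)\in\{\mu,\mu+n\}\pmod{2n}$ for $h\neq 0$; write $F(h)=\mu+\eta(h)\cdot n$ with $\eta\colon H\setminus\{0\}\to\{0,1\}$. The equations coming from $4$-subsets through $0$ (the $7$ two-dimensional linear subspaces of $H$) force the parity of $\eta(a)+\eta(b)+\eta(a+b)$ to be constant in $(a,b)$, and hence $\eta$ is affine on $H\setminus\{0\}$. Then for any $4$-subset $L\subseteq H$ not through $0$ one computes $\sum_{h\in L}\eta(h)\equiv 0\pmod 2$, so $\sum_{h\in L}F(h)\equiv 4\mu\pmod{2n}$, and the original equation reduces to $4\mu\equiv 2k\pmod{2n}$. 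But $4\mu$ is divisible by $4$ while $2k\equiv 2\pmod 4$ (as $k$ is odd) and $4\mid 2n$, a contradiction.

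In the second case ($A$ contains a Fano line), the archetypal sub-configuration is $A=(H\setminus\{0\})\cup\{w\}$ for a hyperplane $H$ and $w\notin H$. There $A$ contains the $7$ Fano lines of $H\setminus\{0\}$ (giving $3$-subset equations) and the $7$ complements of Fano lines in $H\setminus\{0\}$ (giving $4$-subset equations). Summing the $7$ line equations, each element of $H\setminus\{0\}$ is counted three times, giving $3S_H\equiv 14k\pmod{2n}$ with $S_H=\sum_{z\in H\setminus\{0\}}f(z)$; each complement $4$-subset equation reduces to $S_H\equiv 4k\pmod{2n}$. Combining, $12k\equiv 14k\pmod{2n}$, i.e.\ $2k\equiv 0\pmod{2n}$, contradicting $\gcd(k,n)=1$ and $1\le k\le n-1$. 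For the remaining configurations of $A$ in Case $2$, the same strategy applies: combine enough Fano-line and $4$-subset equations so that the individual values of $f$ cancel and only a nontrivial multiple of $k$ remains, yielding a congruence of the form $mk\equiv 0\pmod{2n}$ with $\gcd(m,n)=1$, which again forces $k\equiv 0\pmod n$ and contradicts $k\in[1,n-1]$.

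The main obstacle is the careful combinatorial analysis in Case $2$ for configurations where $A$ neither takes the clean form $(H\setminus\{0\})\cup\{w\}$ nor is sum-free: the precise number and arrangement of Fano lines and zero-sum $4$-subsets in $A$ depend on the combinatorial type of $A$, so the choice of which equations to combine---and in what weighted fashion---must be tailored to each configuration before the cancellation isolates the desired multiple of $k$.
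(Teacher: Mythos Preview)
Your Case~1 (the sum-free configuration) is clean and correct: the identification of an $8$-element sum-free subset of $C_2^4\setminus\{0\}$ with a nontrivial hyperplane coset is standard, and the computation leading to $4\mu\equiv 2k\pmod{2n}$ is sound. The archetypal subcase $A=(H\setminus\{0\})\cup\{w\}$ in Case~2 also checks out.

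The genuine gap is exactly where you say it is: the ``remaining configurations'' in Case~2. You assert that one can always combine the available $3$- and $4$-subset equations so that the $f$-values cancel and a congruence $mk\equiv 0\pmod{2n}$ with $\gcd(m,n)=1$ drops out, but you neither enumerate the combinatorial types of $8$-subsets of $C_2^4\setminus\{0\}$ containing a Fano line nor exhibit the required linear combination for any of them beyond the single archetype. There is no a~priori reason the cancellation should always isolate a multiple of $k$ coprime to $n$; in Case~1 the contradiction had a different shape ($4\mu\equiv 2k$), so the uniform template you propose for Case~2 is not obviously robust. As written, the argument is an outline with one worked example, not a proof.

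The paper avoids this classification entirely. Instead of splitting on the global structure of $A$, it uses a counting argument ($\sum_{v}\mathsf N_v(\theta(S))=36>2\cdot 15$) to find some $v\in\theta(G)\setminus\{0\}$ with $\mathsf N_v(\theta(S))\ge 3$, giving either three disjoint $2$-element subsequences with the same $\theta$-sum or one element plus two such pairs. From the hypothesis it then extracts parity information about the coefficients $k_i$ (where $\zeta(h_i)=k_ie$), hand-picks a $5$-element subsequence $W$ with controlled parities, and invokes $\mathsf D(C_2^4)=5$ to obtain a zero-sum $V\mid W$ of length $3$, $4$, or $5$. Each length is then dispatched by a short parity check. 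This local approach sidesteps the need to know the full combinatorial type of $A$, which is precisely the obstacle you ran into.
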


\begin{proof}Without loss of generality, we can assume that $k=1$. Otherwise choose $(e_1,e_2,e_3,ke)$ to be a basis of $G$.

   Assume to the contrary that for all subsequences $T$ of $S$ with $|T|\in [3,4]$ and  $\sigma(T)\in \ker(\theta)$, we have that  $\sigma(T)=
  2e$.

For any $v\in \theta(G)\setminus \{0\}$,
we define
$$\mathsf N_v(\theta(S))=|\{T\t \theta(S) \ :\  |T|=2 \text{ and } \sigma(T)=v  \}|+\delta_v,$$
where
\[
\delta_v =\left \{ \begin{array}{ll}  1 ,\quad \mbox{ if } v\in \mathsf{supp}(\theta(S)); \\  0,\quad \mbox{ if } v\notin \mathsf{supp}(\theta(S)).
\end{array} \right.
\]

Then $\sum_{v\in \theta(G)\setminus \{0\}}\mathsf N_v(\theta(S))=\frac{8\times 7}{2}+8=36$ and $|\theta(G)\setminus \{0\}|=15$ which implies that there exists an element $v\in \theta(G)\setminus \{0\}$ such that $\mathsf N_v(\theta(S))\ge3$. Therefore we can distinguish the following two cases.

\medskip\noindent{\bf Case 1.} There exists $v\in \theta(G)\setminus\{0\}$
 such that $\mathsf N_v(\theta(S))\ge 3$ and $\delta_v=1$.
 \smallskip

Without loss of generality, we can assume that   $\sigma(\theta(h_1))=\sigma(\theta(h_2h_3))=\sigma(\theta(h_4h_5))$. Then we have  $\sigma(\theta(h_1h_2h_3))=\sigma(\theta(h_2h_3h_4h_5))=\sigma(\theta(h_4h_5h_1))=0$ which implies that $\sigma(\zeta(h_1h_2h_3))=\sigma(\zeta(h_2h_3h_4h_5))=\sigma(\zeta(h_4h_5h_1))=2e$. Therefore $\zeta(h_1)=\zeta(h_2+h_3)=\zeta(h_4+h_5)=e$ or $(n+1)e$.

Let $\zeta(h_i)=k_ie$, where $k_i\in [0,2n-1]$ for each $i\in [1,8]$. Then $k_2+k_3,k_4+k_5$ are odd and there exist distinct $i,j\in [6,8]$ such that $k_i\equiv k_j\pmod 2$. Without loss of generality, we can assume that $k_2,k_4,k_6+k_7$ are even and hence $k_1,k_3,k_5$ are odd.
 Consider the sequence $W=h_1h_2h_4h_6h_7$ (see Figure \ref{ff3}).

  \begin{figure}[ht]
  \begin{center}
\setlength{\unitlength}{0.7 mm}%
\begin{picture}(102.66,34.98)(0,0)
\put(30.73,22.97){\circle*{1.80}}
\put(30.39,2.62){\circle*{1.80}}
\put(50.74,22.97){\circle*{1.80}}
\put(50.39,2.62){\circle*{1.80}}
\put(70.40,22.97){\circle*{1.80}}
\put(90.40,22.97){\circle*{1.80}}
\put(10.38,12.45){\circle*{1.80}}
\put(7.41,14.40){\fontsize{8.53}{10.24}\selectfont \makebox(7.5, 3.0)[l]{$h_1$\strut}}
\put(26.76,25.00){\fontsize{8.53}{10.24}\selectfont \makebox(7.5, 3.0)[l]{$h_2$\strut}}
\put(26.90,5.09){\fontsize{8.53}{10.24}\selectfont \makebox(7.5, 3.0)[l]{$h_3$\strut}}
\put(46.60,25.00){\fontsize{8.53}{10.24}\selectfont \makebox(7.5, 3.0)[l]{$h_4$\strut}}
\put(46.87,4.92){\fontsize{8.53}{10.24}\selectfont \makebox(7.5, 3.0)[l]{$h_5$\strut}}
\put(65.91,25.00){\fontsize{8.53}{10.24}\selectfont \makebox(7.5, 3.0)[l]{$h_6$\strut}}
\put(86.49,25.00){\fontsize{8.53}{10.24}\selectfont \makebox(7.5, 3.0)[l]{$h_7$\strut}}
\thinlines\lbezier(20.76,32.78)(100.47,32.78)\lbezier(100.47,32.78)(100.47,18.12)\lbezier(100.47,18.12)(22.50,18.12)\lbezier(22.50,18.12)(10.48,5.64)\lbezier(10.48,5.64)(2.00,13.93)\lbezier(2.00,13.93)(20.76,32.78)
\end{picture}%

  \end{center}
  \caption{}\label{ff3}
  \end{figure}

  Since $\theta(W)\in \mathscr{F}(\theta(G))$ and $\mathsf D(\theta(G))=\mathsf D(C_2^4)=5$, there exists a subsequence $V\mid W$ such that $\sigma(\theta(V))=0$ and $|V|\in \{3, 4, 5\}$. We distinguish three cases depending on $|V|$.

Suppose that $|V|=5$. Then $\sigma(\zeta(V))=(2k+1)e$ for some $k\in [0,n-1]$, a contradiction to $\sigma(\theta(V))=0$.

Suppose that $|V|=4$. If $h_2h_4\t V$, then $\sigma(\theta(V))=\sigma(\theta(V(h_2h_4)^{-1}h_3h_5))=\sigma(\theta(h_2h_3h_4h_5))=0$ and hence  $\sigma(\zeta(V))=\sigma(\zeta(V(h_2h_4)^{-1}h_3h_5))=\sigma(\zeta(h_2h_3h_4h_5))=2e$. Therefore $\zeta(h_2+h_4)=e$ or $(n+1)e$, a contradiction to $k_2,k_4$ are even. Thus, without loss of generality, we only need to consider $V=h_1h_2h_6h_7$. Then $\sigma(\zeta(V))=(2k+1)e$ for some $k\in [0,n-1]$, a contradiction to $\sigma(\theta(V))=0$.

Suppose that $|V|=3$. By symmetry, we only need to consider $V=h_1h_6h_7$, $V=h_2h_6h_7$, $V=h_2h_4h_6$ or $V=h_1h_2h_4$. If $V=h_1h_6h_7$, then $\sigma(\zeta(V))=(2k+1)e$ for some $k\in [0,n-1]$, a contradiction to $\sigma(\theta(V))=0$.
If $V=h_2h_6h_7$, then $\sigma(\theta(h_1h_3h_6h_7))=\sigma(\theta(V))=\sigma(\theta(h_1h_2h_3))=0$ and hence $\sigma(\zeta(h_1h_3h_6h_7))=\sigma(\zeta(V))=\sigma(\zeta(h_1h_2h_3))=2e$. It follows that $\zeta(h_2)=\zeta(h_1+h_3)=\zeta(h_6+h_7)=e$ or $(n+1)e$, a contradiction. If $V=h_2h_4h_6$, then $\sigma(\zeta(V))=\sigma(\zeta(h_3h_5h_6))=\sigma(\zeta(h_2h_3h_4h_5))=2e$ which implies that $\zeta(h_2+h_4)=e$ or $(n+1)e$, a contradiction to $k_2,k_4$ are even.  If $V=h_1h_2h_4$, then $\theta(h_3)=\theta(h_4)$, a contradiction.

\medskip\noindent{\bf Case 2.} There exists  $v\in \theta(G)\setminus\{0\}$ such that
 $\mathsf N_v(\theta(S))\ge 3$ and $\delta_v=0$.
 \smallskip

 Without loss of generality, we can assume that $\theta(h_1+h_2)=\theta(h_3+h_4)=\theta(h_5+h_6)$. Then  $\sigma(\theta(h_1h_2h_3h_4))=\sigma(\theta(h_3h_4h_5h_6))=\sigma(\theta(h_5h_6h_1h_2))=0$ and hence  $\sigma(\zeta(h_1h_2h_3h_4))=\sigma(\zeta(h_3h_4h_5h_6))=\sigma(\zeta(h_5h_6h_1h_2))=2e$. Therefore $\zeta(h_1+h_2)=\zeta(h_3+h_4)=\zeta(h_5+h_6)=e$ or $(n+1)e$.

 Let $\zeta(h_i)=k_ie$, where $k_i\in [0,2n-1]$ for each $i\in [1,8]$. Without loss of generality, we can assume that $k_2,k_4,k_6$ are even and $k_1,k_3,k_5$ are odd.

Therefore we can distinguish the following two cases.

\medskip\noindent{\bf Subcase 2.1.} $k_7,k_8$ are odd.
 \smallskip

Consider the sequence $W=h_1h_3h_5h_7h_8$ (see Figure \ref{f8}).
\begin{figure}[ht]
\begin{center}
\setlength{\unitlength}{0.7 mm}%
\begin{picture}(76.76,37.24)(0,0)
\put(7.00,27.00){\circle*{1.80}}
\put(7.00,7.00){\circle*{1.80}}
\put(27.00,27.00){\circle*{1.80}}
\put(27.00,7.00){\circle*{1.80}}
\put(47.00,27.00){\circle*{1.80}}
\put(47.00,7.00){\circle*{1.80}}
\put(67.00,27.00){\circle*{1.80}}
\put(87.00,27.00){\circle*{1.80}}

\put(3.65,28.85){\fontsize{8.53}{10.24}\selectfont \makebox(7.5, 3.0)[l]{$h_1$\strut}}
\put(3.71,9.67){\fontsize{8.53}{10.24}\selectfont \makebox(7.5, 3.0)[l]{$h_2$\strut}}
\put(23.90,28.85){\fontsize{8.53}{10.24}\selectfont \makebox(7.5, 3.0)[l]{$h_3$\strut}}
\put(23.55,9.67){\fontsize{8.53}{10.24}\selectfont \makebox(7.5, 3.0)[l]{$h_4$\strut}}
\put(43.44,28.85){\fontsize{8.53}{10.24}\selectfont \makebox(7.5, 3.0)[l]{$h_5$\strut}}
\put(44.01,9.67){\fontsize{8.53}{10.24}\selectfont \makebox(7.5, 3.0)[l]{$h_6$\strut}}
\put(63.52,28.85){\fontsize{8.53}{10.24}\selectfont \makebox(7.5, 3.0)[l]{$h_7$\strut}}
\put(84.09,28.85){\fontsize{8.53}{10.24}\selectfont \makebox(7.5, 3.0)[l]{$h_8$\strut}}
\thinlines
\lbezier(2.00,35.24)(94.76,35.24)
\lbezier(94.76,35.24)(94.76,22.48)
\lbezier(94.76,22.48)(2.00,22.48)
\lbezier(2.00,22.48)(2.00,35.24)
\end{picture}%

\end{center}
\caption{}\label{f8}
\end{figure}

Since $\theta(W)\in \mathscr{F}(\theta(G))$ and $\mathsf D(\theta(G))=\mathsf D(C_2^4)=5$, there exists a subsequence $V\mid W$ such that $\sigma(\theta(V))=0$ and $|V|\in \{3, 4, 5\}$. We distinguish two cases depending on $|V|$.

Suppose that $|V|=5$ or $3$. Then $\sigma(\zeta(V))=(2k+1)e$ for some $k\in[0,n-1]$, a contradiction to $\sigma(\theta(V))=0$.

Suppose that $|V|=4$. By symmetry, we only need to consider $V=h_1h_3h_5h_7$ or $V=h_1h_3h_7h_8$. For both cases, $h_1h_3\t V$. Since  $\sigma(\theta(V))=\sigma(\theta(V(h_1h_3)^{-1}h_2h_4))=\sigma(\theta(h_1h_2h_3h_4))=0$, we obtain that $\sigma(\zeta(V))=\sigma(\zeta(V(h_1h_3)^{-1}h_2h_4))=\sigma(\zeta(h_1h_2h_3h_4))=2e$ which implies that $\zeta(h_1+h_3)=e$ or $(n+1)e$, a contradiction to $k_1,k_3$ are odd.

\medskip\noindent{\bf Subcase 2.2.} $k_7$ or $k_8$ is even. Say, $k_7$ is even.
 \smallskip

Consider the sequence $W=h_1h_2h_4h_6h_7$ (see Figure \ref{f9}).
\begin{figure}[ht]
\begin{center}
\setlength{\unitlength}{0.7 mm}%
\begin{picture}(76.76,37.24)(0,0)
\put(7.00,27.04){\circle*{1.80}}
\put(6.96,6.69){\circle*{1.80}}
\put(27.31,27.04){\circle*{1.80}}
\put(26.96,6.69){\circle*{1.80}}
\put(46.97,27.04){\circle*{1.80}}
\put(67.52,6.69){\circle*{1.80}}
\put(3.65,28.85){\fontsize{8.53}{10.24}\selectfont \makebox(7.5, 3.0)[l]{$h_1$\strut}}
\put(3.71,9.67){\fontsize{8.53}{10.24}\selectfont \makebox(7.5, 3.0)[l]{$h_2$\strut}}
\put(23.90,28.85){\fontsize{8.53}{10.24}\selectfont \makebox(7.5, 3.0)[l]{$h_3$\strut}}
\put(23.55,9.67){\fontsize{8.53}{10.24}\selectfont \makebox(7.5, 3.0)[l]{$h_4$\strut}}
\put(43.44,28.85){\fontsize{8.53}{10.24}\selectfont \makebox(7.5, 3.0)[l]{$h_5$\strut}}
\put(44.01,9.67){\fontsize{8.53}{10.24}\selectfont \makebox(7.5, 3.0)[l]{$h_6$\strut}}
\put(64.09,9.67){\fontsize{8.53}{10.24}\selectfont \makebox(7.5, 3.0)[l]{$h_7$\strut}}
\put(47.05,6.69){\circle*{1.80}}
\thinlines
\lbezier(2.00,35.24)(13.00,35.24)
\lbezier(13.00,35.24)(13.00,15.00)
\lbezier(74.76,15.00)(13.00,15.00)
\lbezier(74.76,15.00)(74.76,2.00)
\lbezier(74.76,2.00)(2.00,2.00)
\lbezier(2.00,2.00)(2.00,35.24)
\end{picture}%

\end{center}
\caption{}\label{f9}
\end{figure}

Since $\theta(W)\in \mathscr{F}(\theta(G))$ and $\mathsf D(\theta(G))=\mathsf D(C_2^4)=5$, there exists a subsequence $V\mid W$ such that $\sigma(\theta(V))=0$ and $|V|\in \{3, 4, 5\}$. We distinguish three cases depending on $|V|$.

Suppose that $|V|=5$. Then $\sigma(\zeta(V))=(2k+1)e$ for some $k\in[0,n-1]$, a contradiction to $\sigma(\theta(V))=0$.

Suppose that $|V|=4$. Since $\sigma(\theta(V))=0$, we obtain that $V=h_2h_4h_6h_7$. Then $\sigma(\theta(V))=\sigma(\theta(h_1h_3h_6h_7))=\sigma(\theta(h_1h_2h_3h_4))=0$ and hence $\sigma(\zeta(V))=\sigma(\zeta(h_1h_3h_6h_7))=\sigma(\zeta(h_1h_2h_3h_4))=2e$. Therefore $\zeta(h_1+h_3)=e$ or $(n+1)e$, a contradiction to $k_1,k_3$ are odd.

Suppose that $|V|=3$. Since $\sigma(\theta(V))=0$, we obtain that $h_1\nmid V$. By symmetry, we only need to consider $V=h_2h_4h_6$ or $V=h_2h_4h_7$.  For both cases, $h_2h_4\t V$. Since  $\sigma(\theta(V))=\sigma(\theta(V(h_2h_4)^{-1}h_1h_3))=\sigma(\theta(h_1h_2h_3h_4))=0$, we obtain that $\sigma(\zeta(V))=\sigma(\zeta(V(h_2h_4)^{-1}h_1h_3))=\sigma(\zeta(h_1h_2h_3h_4))=2e$ which implies that $\zeta(h_1+h_3)=e$ or $(n+1)e$, a contradiction to $k_1,k_3$ are odd.
\end{proof}

\begin{prop}\label{RE2} $\eta(C_2^3\oplus C_{2n})=2n+6$, where $n\geq  2$ is an even integer.
\end{prop}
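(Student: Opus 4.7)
The plan is to argue the upper bound by contradiction, modeled on the four-case analysis used for the odd $n$ version in Proposition \ref{RE1}, but with the homomorphism $\theta$ from Lemma \ref{EIMP1} replacing the projection $\phi$, and with Lemma \ref{EIMP1} taking over the roles played in the odd case by Lemma \ref{IMP}, Proposition \ref{LL1}, and Corollary \ref{xx1}. The lower bound $\eta(G)\ge 2n+6$ follows immediately from Lemma \ref{ETA} applied with $H=C_2^3$, so that $\mathsf D(H)=4$.

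For the upper bound I fix a sequence $S$ of length $2n+6$ over $G$ with no short zero-sum subsequence. The first step is to extract the product decomposition $S=S_1\cdots S_r\cdot S_0$, where each $|S_i|\le 2$, $\theta(\sigma(S_i))=0$, and $\theta(S_0)$ is squarefree over $\theta(G)\setminus\{0\}$. Every $\sigma(S_i)$ then lies in $\ker(\theta)=\langle 2e\rangle\cong C_n$, and the absence of a short zero-sum subsequence of $S$ forces $\sigma(S_1)\cdots\sigma(S_r)\in\mathscr F(C_n)$ to be zero-sum free, hence $r\le n-1$; on the other hand $|S_0|\le 15$ gives $r\ge n-4$. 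I then run through $r\in\{n-1,n-2,n-3,n-4\}$ in turn.

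In Case $r=n-1$, Lemma \ref{cyclic}.1 forces $\sigma(S_1)=\cdots=\sigma(S_{n-1})=2ke$ with $\gcd(k,n)=1$; Lemma \ref{EIMP1} applied to any length-$8$ subsequence of $S_0$ with this $k$ produces $T\mid S_0$ of length $3$ or $4$ with $\sigma(T)=2se\ne 2ke$, and choosing $j\in[1,n-2]$ with $jk+s\equiv 0\pmod n$ yields the short zero-sum subsequence $S_{i_1}\cdots S_{i_j}T$ of length at most $2n$. In Cases $r=n-2$ and $r=n-3$, Lemma \ref{SUM} produces one or two disjoint length-$3$ subsequences $T_j\mid S_0$ with $\theta$-sum zero; after appending them, the length-$(n-1)$ sequence $\sigma(S_1)\cdots\sigma(S_r)\sigma(T_1)\cdots\sigma(T_{n-1-r})$ in $C_n$ is either not zero-sum free---whence the corresponding subsequence of $S$ has length at most $3(n-1-r)+2r\le 2n$ and provides a short zero-sum---or Lemma \ref{cyclic}.1 forces all its entries equal to some $2ke$ with $\gcd(k,n)=1$. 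In the constant case I reapply Lemma \ref{EIMP1} with the same $k$ to produce a new $T'\mid S_0$ with $\sigma(T')\ne 2ke$, and rerun the argument to derive the contradiction $\sigma(T')=2ke$. For $r=n-3$ this step also establishes the even analog of Assertion~$B$, and the finishing support analysis mirrors the end of the odd Case 3: elements of $\supp(\zeta(S_0))$ lying outside a very restricted set are excluded via Lemma \ref{SUM} together with Assertion~$B$, while the residual tightly constrained support case is killed by $\mathsf D(C_2^4)=5$ applied inside $\ker(\zeta)\cong C_2^3$.

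Case $r=n-4$ requires $n\ge 5$ and uses Lemma \ref{cyclic}.2 in place of Lemma \ref{cyclic}.1: disjoint $T_1,T_2\mid S_0$ (with $\sigma(T_1)\ne\sigma(T_2)$, obtained by combining Lemma \ref{SUM} with Lemma \ref{EIMP1}) yield a zero-sum free sequence of length $n-2$ in $C_n$ which must take the form $e^{n-3}(2e)$ for some generator $e$ of $\ker(\theta)$, and Lemma \ref{EIMP1} then delivers a third $T_3$ whose sum lies outside $\{e,2e\}$; combining $T_3$ with an appropriate prefix of the $S_i$'s together with $T_1$ or $T_2$ produces a short zero-sum subsequence. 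The main obstacle is Case $r=n-3$: the Assertion~$B$ analog must be deduced and the ensuing support analysis of $\zeta(S_0)$ carried out without the splitting $C_{2n}\cong C_2\oplus C_n$ available in the odd case. Lemma \ref{EIMP1} is precisely engineered to supply the key combinatorial input at this step, so the remaining bookkeeping will follow the odd-case argument with $\psi(\cdot)=e$ replaced throughout by $\sigma(\cdot)=2ke$.
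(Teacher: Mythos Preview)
Your overall scheme matches the paper's: replace $\phi$ by $\theta$, decompose $S=S_1\cdots S_r\cdot S_0$, and treat $r\in\{n-1,n-2,n-3,n-4\}$. Cases $r=n-1$ and $r=n-2$ are essentially correct. But there are two genuine gaps.

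\textbf{Case $r=n-3$.} Your plan to ``rerun the argument'' and then mirror the odd-case Assertion~$B$/support analysis does not go through. For the rerun: after Lemma~\ref{EIMP1} yields $T'\mid S_0$ (disjoint from, say, $T_1$) with $\sigma(T')\ne 2ke$, you would append $\sigma(T')$ to $\sigma(S_1)\cdots\sigma(S_{n-3})\sigma(T_1)$ to reach length $n-1$ and invoke Lemma~\ref{cyclic}.1. But if $|T'|=4$ and $\sigma(T')=4ke$, the only zero-sum uses \emph{all} $n-1$ blocks, and the corresponding subsequence of $S$ has length $2(n-3)+3+4=2n+1$, which is not short. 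Second, the odd-case support analysis hinges on the element $\tfrac{n+1}{2}e$, the unique $x\in C_n$ with $2x=e$; for even $n$ there is no such element in $\ker(\theta)\cong C_n$, so that part of the odd argument has no analogue here. (Your aside ``$\mathsf D(C_2^4)=5$ applied inside $\ker(\zeta)\cong C_2^3$'' is also inconsistent, since $\mathsf D(C_2^3)=4$.) The paper's actual Case~3 is much simpler and bypasses all of this: apply Lemma~\ref{EIMP1} \emph{twice}, to $S_0$ and then to $S_0(T_1')^{-1}$, obtaining disjoint $T_1',T_2'\mid S_0$ with $\sigma(T_i')=2t_ie$, $t_i\in[2,n]$; if some $t_i\ge 3$ then $S_1\cdots S_{n-t_i}T_i'$ is short, while if $t_1=t_2=2$ then $S_1\cdots S_{n-4}T_1'T_2'$ has length at most $2(n-4)+8=2n$.

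\textbf{Case $r=n-4$, $n=4$.} You state that this case ``requires $n\ge 5$'', but $n$ is even, so $n=4$ is in scope and gives $r=0$, hence $S=S_0$ with $|S_0|=14$. The paper handles this with a separate ad~hoc argument (its Subcase~4.2) that does not reduce to Lemma~\ref{cyclic}.2 and which your sketch omits entirely. Your outline for $n\ge 6$ also needs tightening: Lemma~\ref{cyclic}.2 allows the zero-sum-free sequence of length $n-2$ to be $g^{n-2}$ as well as $g^{n-3}(2g)$, and Lemma~\ref{EIMP1} only excludes a single value $2ke$, not two. The paper first uses one application of Lemma~\ref{EIMP1} (together with Lemma~\ref{cyclic}.2) to pin down $\sigma(S_i)=2e$ for all $i\le n-4$, and then finishes by applying Lemma~\ref{EIMP1} twice exactly as in Case~3.
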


\begin{proof} Let $G=C_2^3\oplus C_{2n}$, where $n\ge 2$  is an even integer.  Suppose that  $\theta:G\rightarrow G$ is the homomorphism defined by $\theta(e_1)=e_1$, $\theta(e_2)=e_2$, $\theta(e_3)=e_3$, $\theta(e)=ne$ and $\zeta:G\rightarrow G$ is the homomorphism defined by $\zeta(e_1)=\zeta(e_2)=\zeta(e_3)=0$, $\zeta(e)=e$.

  In order to prove that $\eta(G)=2n+6$, by Lemma \ref{ETA} we only need to prove that $\eta(G)\le 2n+6$.
 Assume to the contrary that there exists a sequence  $S$ of length $2n+6$ over $G$  containing no short zero-sum subsequence.

Since $|\theta(S)|=|S|=2n+6=2(n-5)+16$, $\eta(C_2^4)=16$, and $\mathsf D(C_n)=n$,
we obtain that $S$ allows a product decomposition as
$$S=S_1\cdot \ldots \cdot S_{r}\cdot S_0,$$
where $S_1, \ldots, S_{r}, S_0$ are sequences over $G$ and, for every $i\in [1, r]$, $\theta(S_i)$ has sum zero and length $|S_i|\le 2$. What's more, $\theta(S_0)$ has no zero-sum subsequence of length $\le 2$  and $n-4\leq r\leq n-1$.
We distinguish the following four cases depending on $r$ to get contradictions.

\medskip

\noindent\textbf{Case 1.} $r=n-1$. Then $|S_0|\ge 8$.
\smallskip

Since $S$ has no short zero-sum subsequence, we obtain that
 $\sigma(S_1)\cdot \ldots\cdot \sigma(S_{n-1})$ is zero-sum free over $\ker(\theta)\cong C_n$. By Lemma \ref{cyclic}.1,  there exists an element $k\in [1,n-1] $ such that  $\sigma(S_1)=\cdots =\sigma(S_{n-1})=2ke$ and $\gcd(k,n)=1$. Without loss of generality, we can assume that $k=1$.

Since $\theta(S_0)$ has no zero-sum subsequence of length $\le 2$, by  Lemma \ref{EIMP1} we obtain that there exists a subsequence $V$ of $S_0$ with length $|V|\in [3,4]$ such that $\sigma(V)=2te$ where $t\in [2,n]$.
 Then by calculation we get
$$\sigma(S_1\cdot\ldots\cdot S_{n-t}\cdot V)=0, \quad \text{ and }$$
$$|S_1\cdot\ldots\cdot S_{n-t}\cdot V|
\le 2(n-t)+4
\le 2n,$$ which implies that $S_1\cdot\ldots\cdot S_{n-t}\cdot V$ is a short zero-sum subsequence of $S$,
a contradiction.

\medskip

\noindent\textbf{Case 2.} $r=n-2$. Then $|S_0|\ge 10$.
\smallskip

Since $\theta(S_0)$ is squarefree, by Lemma \ref{SUM}, $S_0$ has  a subsequence  $T$ of length $3$  such that $\sigma(\theta(T))=0$. By our assumption,  the sequence $\sigma(S_1)\cdot \ldots\cdot \sigma(S_{n-2})\sigma(T)$ is zero-sum free over $\ker(\theta)\cong C_n$.  By Lemma \ref{cyclic}.1, we obtain that
$$\sigma(S_1)=\cdots=\sigma(S_{n-2})=\sigma(T)=2ke, \quad \text{ for some $k\in [1,n-1]$ and $\gcd(k,n)=1$.}$$

Without loss of generality, we can assume that $k=1$.
By Lemma \ref{EIMP1}, $S_0$ has  a subsequence  $T'$ of length $|T'|\in \{3,4\}$  such that $\sigma(\theta(T'))=0$ and $\sigma(T')=2te$ with $t\in [2,n]$. Therefore the sequence $S_1\cdot\ldots\cdot S_{n-t}\cdot T'$ is a short zero-sum subsequence of $S$, a contradiction.

\medskip

\noindent\textbf{Case 3.} $r=n-3$. Then $|S_0|\ge 12$ and $n\ge 4$.
\smallskip

\smallskip

By Lemma \ref{SUM}, there exist two subsequences $T_1,T_2$ of  $ S_0$ such that $\sigma(\theta(T_1))=\sigma(\theta(T_2))=0$ and $|T_1|=|T_2|=3$.

 By our assumption, the sequence $\sigma(S_1)\cdot \ldots\cdot \sigma(S_{n-3})\cdot\sigma(T_1)\cdot \sigma(T_2)$ contains no zero-sum subsequence. Therefore by Lemma \ref{cyclic}.1, there exists an  element $k\in [1,n-1]$ and $\gcd(k,n)=1$ such  that
$$\sigma(S_1)\cdot\ldots\cdot\sigma(S_{n-3})\cdot\sigma(T_1)\cdot\sigma(T_2)=(2ke)^{n-1}.$$
Without loss of generality, we can assume  that  $k=1$.
By Lemma \ref{EIMP1}, there exists  $T_1'\mid S_0$ such that $\sigma(\theta(T_1'))=0$, $|T_1'|\in [3,4]$, and $\sigma(T_1')=2t_1e$ with $t_1\in[2,n]$.
By Lemma \ref{EIMP1} again, there exists  $T_2'\mid S_0(T_1')^{-1}$ such that $\sigma(\theta(T_2'))=0$, $|T_2'|\in [3,4]$, and $\sigma(T_2')= 2t_2e$ with $t_2\in[2,n]$.

If $t_1\ge 3$, then $S_1\cdot\ldots\cdot S_{n-t_1}\cdot T_1'$ is a short zero-sum subsequence of $S$, a contradiction.
If $t_2\ge 3$, then $S_1\cdot\ldots\cdot S_{n-t_2}\cdot T_2'$ is a short zero-sum subsequence of $S$, a contradiction.
Otherwise $t_1+t_2=4\le n$. Then $S_1\cdot\ldots\cdot S_{n-4}\cdot T_1'\cdot T_2'$ is a short zero-sum subsequence of $S$, a contradiction.

\medskip
\noindent\textbf{Case 4.} $r=n-4$. Then $|S_0|\ge 14$ and $n\ge 4$.
\smallskip

We distinguish two cases depending on $n$.

\noindent\textbf{Subcase 4.1.} $n\ge 6$.
\smallskip

By Lemma \ref{SUM}, there exist two disjoint  subsequences $T_1,T_2$ of  $ S_0$ such that $\sigma(\theta(T_1))=\sigma(\theta(T_2))=0$ and $|T_1|=|T_2|=3$.

 By our assumption, the sequence $\sigma(S_1)\cdot \ldots\cdot \sigma(S_{n-4})\cdot\sigma(T_1)\cdot \sigma(T_2)$ contains no zero-sum subsequence. Therefore by Lemma \ref{cyclic}.2, there exists an  element $k\in [1,n-1]$ and $\gcd(k,n)=1$ such  that
$$\sigma(S_1)\cdot\ldots\cdot\sigma(S_{n-4})\cdot\sigma(T_1)\cdot\sigma(T_2)=(2ke)^{n-3}4ke\text{ or }(2ke)^{n-2}.$$
Without loss of generality, we can assume  that  $k=1$ and $\sigma(T_1)=2e$. By Lemma \ref{EIMP1}, there exists  $T_3\mid S_0(T_1)^{-1}$ such that $\sigma(\theta(T_3))=0$, $|T_3|\in [3,4]$, and $\sigma(T_3)\neq 2e$. Then the sequence $\sigma(S_1)\cdot \ldots\cdot \sigma(S_{n-4})\cdot\sigma(T_1)\cdot \sigma(T_3)$ contains no zero-sum subsequence and hence $\sigma(S_1)\cdot \ldots\cdot \sigma(S_{n-4})=(2e)^{n-4}$.

By Lemma \ref{EIMP1}, there exists  $T_1'\mid S_0$ such that $\sigma(\theta(T_1'))=0$, $|T_1'|\in [3,4]$, and $\sigma(T_1')=2t_1e$ with $t_1\in[2,n]$.
By Lemma \ref{EIMP1} again, there exists  $T_2'\mid S_0(T_1')^{-1}$ such that $\sigma(\theta(T_2'))=0$, $|T_2'|\in [3,4]$, and $\sigma(T_2')= 2t_2e$ with $t_2\in[2,n]$.

If $t_1\ge 4$, then $S_1\cdot\ldots\cdot S_{n-t_1}\cdot T_1'$ is a short zero-sum subsequence of $S$, a contradiction.
If $t_2\ge 4$, then $S_1\cdot\ldots\cdot S_{n-t_2}\cdot T_2'$ is a short zero-sum subsequence of $S$, a contradiction.
Otherwise $t_1+t_2\le 6\le n$. Then $S_1\cdot\ldots\cdot S_{n-t_1-t_2}\cdot T_1'\cdot T_2'$ is a short zero-sum subsequence of $S$, a contradiction.

\smallskip\noindent\textbf{Subcase 4.2.} $n=4$. Then $S=S_0$.
\smallskip

  By Lemma \ref{SUM}, there exist two disjoint  subsequences $T_1,T_2$ of  $ S_0$ such that $\sigma(\theta(T_1))=\sigma(\theta(T_2))=0$ and $|T_1|=|T_2|=3$.

 If $\sigma(T_1)\neq \sigma(T_2)$, since $T_1T_2$ can not be zero-sum, without loss of generality, we can assume that $\sigma(T_1)=2e$ and $\sigma(T_2)=4e$.  By Lemma \ref{EIMP1}, there exists  $T_3\mid S_0(T_1T_2)^{-1}$ such that $\sigma(\theta(T_3))=0$, $|T_3|\in [3,4]$, and $\sigma(T_3)= 2te$ with $t\in[2,4]$. Thus, one of the sequences $T_3, T_1T_3, T_2T_3$ must be a short zero-sum subsequence of $S$, a contradiction.

Then $\sigma(T_1)=\sigma(T_2)$, since $T_1T_2$ can not be zero-sum, without loss of generality, we can assume that $\sigma(T_1)=\sigma(T_2)=2e$.

 We claim that for any subsequence $T$ of $S$ satisfying that $|T|=3$ and $\sigma(\theta(T))=0$, we have $\sigma(T)=2e$.

 In fact, $T_1$ or $T_2$ must be disjoint with $T$. We can assume that $T_1$ and $T$ are disjoint. If $\sigma(T)=6e$, then $T_1T$ is a short zero-sum subsequence, a contradiction. If $\sigma(T)=4e$, we can do it as before to obtain a contradiction. Then $\sigma(T)=2e$.

Since $\sigma(T_1)=2e$, we can
 choose $g\t T_1$ such that $\zeta(g)\not\in \{e,5e\}$.  By Lemma \ref{SUM}, there exist subsequences $R_1,\ldots, R_4$ of $ST_1^{-1}$ such that $\theta(g)=\sigma(\theta(R_1))=\ldots=\sigma(\theta(R_4))$ and $|R_1|=\ldots=|R_4|=2$. Since for each $i\in [1,6]$, $\sigma(\theta(gR_i))=0$, we obtain that $\sigma(gR_i)=2e$. Thus $\sigma(\zeta(R_i))=2e-\zeta(g)$ for each $i\in [1,4]$. By $\sigma(\theta(R_1R_2))=\sigma(\theta(R_1R_2))=0$, we have $\sigma(R_1R_2)=\sigma(R_1R_2)=4e-2\zeta(g)$. If $\sigma(R_1R_2)=2e$, then $\zeta(g)\in \{e,5e\}$, a contradiction. If $\sigma(R_1R_2)=4e$, then $R_1R_2R_3R_4$ is a short zero-sum subsequence of $S$, a contradiction. Otherwise $\sigma(R_1R_2)=6e$. Then $T_1R_1R_2$ is a short zero-sum subsequence of $S$, a contradiction.
\end{proof}

\begin{proof}[\bf Proof of Theorem \ref{Th1}.2]

By Proposition \ref{RE1} and \ref{RE2}, it follows  that
 $\eta(G)=2n+6$.
If $n\geq 36=\max\{2|C_2^3|+1, 4|C_2^3|+4\}$,  by Lemma \ref{ETAF},  we have that $\mathsf s(C_2^3\oplus C_{2n})=\eta(C_2^3\oplus C_{2n})+{\exp}(C_2^3\oplus C_{2n})-1=2n+6+2n-1=4n+5$.
\end{proof}

\subsection*{Acknowledgements}
The authors would like to thank Professor Alfred Geroldinger of University of Graz for his many helpful suggestions.  
 This research was supported by NSFC (grant no. 11401542), the Fundamental Research Funds for the Central Universities (grant no. 2652014033), and
the Austrian Science Fund FWF (project no.  M1641-N26).

\bigskip


\end{document}